\newenvironment{nouppercase}{%
  \renewcommand{\uppercasenonmath}[1]{}}{}
\newcommand{\la}{\lambda}
\definecolor{darkgreen}{rgb}{0.,0.5,0.}
\numberwithin{equation}{section}
\newtheorem{thm}{Theorem}[section]
\newtheorem{lem}[thm]{Lemma}
\newtheorem{cor}[thm]{Corollary}
\newtheorem{ex}[thm]{Example}
\begin{document}
\begin{center}

{\Large \bf On the polynomiality and asymptotics of moments of sizes for random $(n, dn\pm 1)$-core partitions with distinct parts}{\let\thefootnote\relax\footnote{{$*$ This paper has been accepted for publication in SCIENCE CHINA Mathematics.}}}

\thanks{This paper has been accepted for publication in SCIENCE CHINA Mathematics.}
\end{center}

\vskip 5mm

\begin{center}
{Huan XIONG}$^{1}$ 
and {Wenston J.T. ZANG}$^{2}$ \vskip 2mm

 $^{1}$Institut de Recherche Math\'ematique Avanc\'ee, UMR 7501 \\ 
 Universit\'e de Strasbourg et CNRS,  F-67000 Strasbourg, France\\[6pt]
    $^{2}$Institute of Advanced Study of Mathematics \\
   Harbin Institute of Technology, Heilongjiang 150001, P.R. China\\[6pt]

   \vskip 2mm

Email:     $^1$xiong@math.unistra.fr, $^2$zang@hit.edu.cn 
\end{center}

\vskip 6mm \noindent {\bf Abstract.} 
Amdeberhan's conjectures on the enumeration, the average size, and the largest size of $(n,n+1)$-core partitions with distinct parts have motivated many research on this topic. Recently, Straub and Nath-Sellers obtained formulas for the numbers of $(n, dn-1)$ and $(n, dn+1)$-core partitions with distinct parts, respectively. Let $X_{s,t}$ be the size of a uniform random $(s,t)$-core partition with distinct parts when $s$ and $t$ are coprime to each other. Some explicit formulas for the $k$-th moments $\mathbb{E} [X_{n,n+1}^k]$ and $\mathbb{E} [X_{2n+1,2n+3}^k]$ were given by Zaleski and Zeilberger when $k$ is small. Zaleski also  studied the expectation and higher moments of $X_{n,dn-1}$ and conjectured some polynomiality properties concerning them in {\em arXiv:1702.05634}.

Motivated by the above works, we derive several polynomiality results and asymptotic formulas for the $k$-th moments of $X_{n,dn+1}$ and $X_{n,dn-1}$ in this paper, by studying the beta sets of core partitions. In particular, we show that these $k$-th moments are asymptotically some polynomials of n with degrees at most $2k$, when $d$ is given and $n$ tends to infinity.  Moreover, when $d=1$, we derive that the $k$-th moment $\mathbb{E} [X_{n,n+1}^k]$ of $X_{n,n+1}$ is asymptotically equal to $\left(n^2/10\right)^k$ when $n$ tends to infinity. The explicit formulas for the expectations $\mathbb{E} [X_{n,dn+1}]$ and $\mathbb{E} [X_{n,dn-1}]$  are also given. The $(n,dn-1)$-core case in our results proves several conjectures of Zaleski on the polynomiality of the expectation and higher moments of $X_{n,dn-1}$.

\vskip 2mm \noindent {\bf Keywords.} 
partition, hook length,  core partition, average size, distinct part

\vskip 1mm \noindent {\bf MSC(2010).} 
05A17, 11P81

\section{Introduction}
A partition $\lambda$ is
called a \emph{$t$-core partition} if none of its hook lengths is divisible by~$t$. Core partitions arise naturally in the study of modular representations of finite groups. For example, they label the blocks of irreducible characters of  symmetric groups (see \cite{ols}).
Furthermore,  $\lambda$ is called a
\emph{$(t_1,t_2,\ldots, t_m)$}-core partition if it is
simultaneously  a $t_1$-core, a $t_2$-core, $\ldots$, a $t_m$-core
partition (see \cite{tamd, KN}). It is well known that, the number of $(t_1,t_2,\ldots, t_m)$-core partitions is finite if
and only if the greatest common divisor  $\gcd(t_1,t_2,\ldots, t_m)=1$ (for example, see \cite[Theorem $1$]{KN} or \cite[Theorem 1.1]{Xiong3}). 

In 2002,  Anderson \cite{and} proved the following result on the number of $(t_1,t_2)$-core
partitions, by studying  their connections with certain lattice paths.  
\begin{thm}[Anderson \cite{and}]
Let $t_1$ and $t_2$ be two coprime positive integers.
Then the number of $(t_1,t_2)$-core
partitions equals
$$  \frac{(t_1+t_2-1)!}{t_1!\, t_2!}.$$
\end{thm}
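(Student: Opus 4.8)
The plan is to run Anderson's lattice-path argument: encode partitions by beta-sets, rephrase the two core conditions as a single closure property, and finish with the cycle lemma. To a partition $\lambda=(\lambda_1\ge\lambda_2\ge\cdots)$ I would associate its \emph{beta-set} $B(\lambda)=\{\lambda_i-i:i\ge 1\}\subseteq\mathbb{Z}$, which contains every sufficiently negative integer and omits every sufficiently large one. A standard computation with hook lengths shows that the hook lengths of $\lambda$, counted with multiplicity, are exactly the positive differences $b-c$ over pairs $b\in B(\lambda)$, $c\in\mathbb{Z}\setminus B(\lambda)$; telescoping along arithmetic progressions with common difference $t$ then gives that $\lambda$ is a $t$-core if and only if $b\in B(\lambda)$ implies $b-t\in B(\lambda)$, that is, the complement $\mathbb{Z}\setminus B(\lambda)$ is stable under adding $t$.

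Applying this with $t=t_1$ and with $t=t_2$, and fixing the shift of the (balanced) beta-set adapted to the pair $(t_1,t_2)$, one turns a $(t_1,t_2)$-core into a subset of $\mathbb{Z}_{>0}$ whose complement inside $\mathbb{Z}_{>0}$ is a down-set of the poset $P_{t_1,t_2}$ on the gap set $\mathbb{Z}_{>0}\setminus(t_1\mathbb{Z}_{\ge 0}+t_2\mathbb{Z}_{\ge 0})$ of the numerical semigroup $\langle t_1,t_2\rangle$, where $a\le b$ means $b-a\in t_1\mathbb{Z}_{\ge 0}+t_2\mathbb{Z}_{\ge 0}$. Because $\gcd(t_1,t_2)=1$ this gap set is finite, of size $(t_1-1)(t_2-1)/2$, so $(t_1,t_2)$-core partitions are in bijection with the order ideals of $P_{t_1,t_2}$. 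Drawing $P_{t_1,t_2}$ in the plane so that the operation of adding $t_1$ becomes a unit rightward step and adding $t_2$ a unit upward step, each order ideal is cut out by a monotone boundary path, and the paths that arise are precisely the northeast lattice paths from $(0,0)$ to $(t_2,t_1)$ that stay weakly below the segment joining these two points; coprimality forces such a path to meet that segment only at its endpoints.

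It remains to count those lattice paths, and the cycle lemma does this at once: among the $\binom{t_1+t_2}{t_1}$ northeast paths from $(0,0)$ to $(t_2,t_1)$, cyclically rotating the step word singles out exactly a $1/(t_1+t_2)$ fraction as the ones lying weakly below the diagonal, so their number is
\[
\frac{1}{t_1+t_2}\binom{t_1+t_2}{t_1}=\frac{(t_1+t_2-1)!}{t_1!\,t_2!},
\]
as claimed. I expect essentially all of the work to lie in the middle step: one must check carefully that being a $t$-core is \emph{equivalent} to closure of the complement of the beta-set under $+t$ (using that $\lambda$ has finitely many nonzero parts, so that $B(\lambda)$ contains all sufficiently negative integers), fix the normalization of the beta-set so that the passage to down-sets of $P_{t_1,t_2}$ is unambiguous, and verify that every order ideal of $P_{t_1,t_2}$ arises from a genuine partition with nonnegative parts and from exactly one of them. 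Once this dictionary is in place, the path count via the cycle lemma is routine.
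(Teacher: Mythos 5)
The paper does not prove this statement: it is quoted as background (Anderson's 2002 theorem), with only the remark that Anderson's proof goes through lattice paths, so there is no internal proof to compare against. Your sketch is a correct outline of exactly that standard argument (abacus/beta-set reformulation of the core conditions, reduction to order ideals on the gap set of the numerical semigroup $\langle t_1,t_2\rangle$, monotone paths weakly below the diagonal of the $t_1\times t_2$ rectangle, cycle lemma), and the places you flag as needing care --- the normalization of the beta-set and the equivalence of the order-ideal and partition pictures --- are indeed the only points requiring work; the one small slip is that with your convention $B(\lambda)=\{\lambda_i-i\}$ the \emph{complement} of the beta-set is closed under \emph{adding} $t_1$ and $t_2$, hence is an up-set rather than a down-set of $P_{t_1,t_2}$, but this is a harmless orientation convention that does not affect the count.
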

Anderson's work has  motivated many research on the enumeration, largest sizes and  average sizes of simultaneous core partitions
  (see \cite{tamd1,    CHW, EZ, ford,   N3,  SZ, Xiong1,  YZZ}). For example, when $t_1$ and $t_2$ are coprime to each other, it was proved by Olsson and Stanton
\cite{ols} that the largest size of $(t_1,t_2)$-core
partitions equals $    {(t_1^2-1)(t_2^2-1)}/{24}$, in their study of block inclusions of symmetric groups.   Armstrong (see \cite{AHJ}) gave the following conjecture on the average size of such partitions, which  was first proved by Johnson \cite{PJ} and later by Wang \cite{Wang}.

\begin{thm}[Armstrong's Conjecture]
Let $t_1$ and $t_2$ be two coprime positive integers.
Then the average size of $(t_1,t_2)$-core
partitions equals
$$  \frac{(t_1-1)(t_2-1)(t_1+t_2+1)}{24}.$$
\end{thm}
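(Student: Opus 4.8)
The plan is to recast the statement via \emph{beta-sets} (abacus displays) --- the technology used throughout the rest of the paper --- and to reduce it to a finite counting problem about order ideals of an explicit poset. Recall that a partition $\lambda$ is recovered from its set of first-column hook lengths $H(\lambda)=\{\lambda_i+\ell(\lambda)-i:\ 1\le i\le\ell(\lambda)\}$, that $|\lambda|=\sum_{h\in H(\lambda)}h-\binom{|H(\lambda)|}{2}$, and (the standard abacus fact) that $\lambda$ is a $t$-core exactly when the $\beta$-set $H(\lambda)\cup\{-1,-2,\dots\}$ is stable under subtracting $t$. Hence $\lambda$ is a $(t_1,t_2)$-core iff $H(\lambda)$ is contained in the set $D$ of \emph{gaps} of the numerical semigroup $\langle t_1,t_2\rangle=\{at_1+bt_2:\ a,b\ge 0\}$ and is a down-set for the partial order $g\preceq g'\Leftrightarrow g'-g\in\langle t_1,t_2\rangle$. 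Thus $(t_1,t_2)$-cores correspond bijectively to the order ideals of the \emph{gap poset} $\mathcal P=\mathcal P_{t_1,t_2}$ on $D$, and by Anderson's theorem there are $C:=\frac{(t_1+t_2-1)!}{t_1!\,t_2!}$ of them. Writing each gap uniquely as $g_{a,b}=at_2-bt_1$ with $1\le a\le t_1-1$ and $1\le b\le\lfloor at_2/t_1\rfloor$, one checks that the cover relations of $\mathcal P$ are $g_{a,b}\prec g_{a,b-1}$ and $g_{a,b}\prec g_{a+1,b}$, so $\mathcal P$ is the cell poset of a ``staircase'' Young diagram and its order ideals are exactly the lattice paths lying under that staircase.

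Under this bijection $\lambda\leftrightarrow I$ one has $|\lambda|=\sum_{g\in I}g-\binom{|I|}{2}$, so, letting $I$ now denote a uniformly random order ideal of $\mathcal P$, the average size to be computed is
\[
\frac1C\sum_{I}\Big(\sum_{g\in I}g-\binom{|I|}{2}\Big)\;=\;\sum_{g\in D}g\cdot\Pr[\,g\in I\,]\;-\;\mathbb{E}\!\left[\binom{|I|}{2}\right].
\]
Here $\Pr[g\in I]$ is the number of order ideals of $\mathcal P$ containing the principal ideal generated by $g$, divided by $C$; transported to lattice paths this counts the paths passing through a prescribed lattice point, which splits into two independent ballot-type counts and is therefore a ratio of binomial coefficients, computable by the cycle lemma. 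Summing $g\cdot\Pr[g\in I]$ over the gaps then becomes an explicit arithmetic sum that I would handle with the classical identities for $\sum_{a=1}^{t_1-1}\lfloor at_2/t_1\rfloor$ and its weighted variants. For the correction term I would compute $\mathbb{E}[|I|]$ and $\mathbb{E}[|I|^2]$ --- equivalently the first two moments of the (corrected) area of a random rational Dyck path --- from the same enumeration or from the generating function $\sum_I q^{|I|}$; collecting everything should collapse to $\tfrac{(t_1-1)(t_2-1)(t_1+t_2+1)}{24}$.

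The real obstacle, I expect, is precisely the nonlinear term $-\binom{|I|}{2}$: since the size is not a sum of independent cell-indicators, one cannot just add up one-cell probabilities, and controlling the joint behaviour of the cells of a random order ideal of a staircase poset --- i.e.\ the second moment of the path statistic --- is the delicate point, as is matching its contribution against the hook-length sum so that the final identity in $t_1$ and $t_2$ comes out clean. As independent checks I would use the known maximal size $\tfrac{(t_1^2-1)(t_2^2-1)}{24}$ (Olsson--Stanton) and the conjugation involution $\lambda\mapsto\lambda'$, which preserves being a $(t_1,t_2)$-core. If the direct probabilistic route proves unwieldy, the fallback is the Ehrhart-theoretic one: encode a $t_1$-core by its charge vector $(n_0,\dots,n_{t_1-1})$ with $\sum_i n_i=0$ and $|\lambda|=\tfrac{t_1}{2}\sum_i n_i^2+\sum_i i\,n_i$, observe that the extra $t_2$-core condition carves out the lattice points of a dilated simplex, and evaluate the resulting sum of a quadratic over those points by Ehrhart--Macdonald reciprocity together with a symmetry of the simplex; either way Armstrong's formula follows.
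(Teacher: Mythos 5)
First, a point of reference: the paper does not prove this statement at all. Armstrong's Conjecture appears in the introduction purely as background, attributed to Johnson \cite{PJ} and Wang \cite{Wang}; the authors' own contributions concern $(n,dn\pm1)$-cores \emph{with distinct parts}. So there is no in-paper argument to measure you against, and your proposal has to stand or fall on its own.

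As it stands it is a plan rather than a proof, and the plan stops exactly where the difficulty begins. Your setup is sound: the identification of $(t_1,t_2)$-cores with order ideals of the gap poset of $\langle t_1,t_2\rangle$, the staircase/lattice-path picture, and the size formula $|\lambda|=\sum_{g\in I}g-\binom{|I|}{2}$ are all correct and standard (the last is Lemma \ref{th:2.1} of the paper). But the linearity-of-expectation step only disposes of the term $\sum_{g}g\cdot\Pr[g\in I]$; the term $\mathbb{E}\bigl[\binom{|I|}{2}\bigr]=\sum_{g\ne g'}\Pr[g\in I,\ g'\in I]/1$ (summed over unordered pairs) requires the \emph{joint} containment probabilities, which for a random lattice path are ratios involving three ballot-type factors and lead to a double sum over pairs of lattice points that you have not evaluated. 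You candidly admit this is ``the delicate point,'' and the claim that ``collecting everything should collapse'' to $\tfrac{(t_1-1)(t_2-1)(t_1+t_2+1)}{24}$ is precisely the content of the theorem --- the conjecture stood open for several years because these sums do not collapse by routine manipulation. The Ehrhart-theoretic fallback you sketch (charge vectors $(n_0,\dots,n_{t_1-1})$ with $|\lambda|=\tfrac{t_1}{2}\sum n_i^2+\sum i\,n_i$, the $t_2$-core condition cutting out a dilated simplex) is in fact the skeleton of Johnson's actual proof, but there too the crux --- the affine symmetry of the simplex that kills the linear term and reduces the quadratic term to a computable constant --- is named rather than carried out. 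In short: correct reformulation, correct identification of where the work lies, but the work itself is missing; neither route is completed to the point where the stated constant is actually derived.
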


Recently, the problem on the enumeration of simultaneous core partitions with distinct parts was raised by Amdeberhan \cite{tamd}. He conjectured explicit formulas for the number, the largest size and the average size of $(n,n+1)$-core partitions with distinct parts,  which were first proved by the first author \cite{Xiong}, and later proved independently (and extended) by Straub \cite{Straub}, Nath-Sellers \cite{NS}, Zaleski \cite{Za} and Paramonov \cite{Paramonov}. 
Let $X_{s,t}$ be the size of a uniform random $(s,t)$-core partition with distinct parts when $s$ and $t$ are coprime to each other.
Zaleski \cite{Za} derived several  explicit formulas for the $k$-th moment $\mathbb{E} [X_{n,n+1}^k]$ of $X_{n,n+1}$ when $k\leq 16$.
The number, the largest size and the average size of $(2n+1,2n+3)$-core partitions with distinct parts were also well studied
  (see \cite{BNY,Paramonov,Straub,YQJZ, ZZ}). Several explicit formulas for the $k$-th (when $k\leq 7$) 
 moment $\mathbb{E} [X_{2n+1,2n+3}^k]$ of
$X_{2n+1,2n+3}$ were obtained by Zaleski and Zeilberger \cite{ZZ}. 

In 2016,  Straub \cite{Straub}  derived the following generalized Fibonacci recurrence for the number $N_d(n)$ of $(n, dn-1)$-core partitions with distinct parts.

\begin{thm}[Straub \cite{Straub}]
 Let  $N_d(1)=1$, and $N_d(n)$
be the number   of $(n, dn-1)$-core partitions with distinct parts for two positive integers  $d\geq 1$ and $n\geq 2$. Then
\begin{align}
\begin{split} \label{eq:Nds}
&N_d(1)=1, \,\, N_d(2)=d,
\\
&N_d(n)=N_d(n-1)+dN_d(n-2),\ \ \text{if}\ \  n\geq 3.
\end{split}
\end{align}
\end{thm}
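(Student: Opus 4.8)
The plan is to pass to beta-sets, which turn both the $n$-core and the $(dn-1)$-core conditions into linear combinatorial constraints. Recall the standard dictionary: a partition $\lambda=(\lambda_1>\cdots>\lambda_k>0)$ into distinct parts is encoded by its set of first-column hook lengths $B(\lambda)=\{\lambda_i+k-i:1\le i\le k\}$ (the \emph{beta-set} of $\lambda$), which is a finite subset of $\mathbb{Z}_{\ge1}$ containing no two consecutive integers, and every such subset arises from a unique $\lambda$. Furthermore, for a positive integer $t$, the partition $\lambda$ is a $t$-core if and only if $B(\lambda)$ is closed under subtraction of $t$, i.e. $b\in B(\lambda)$ with $b\ge t$ forces $b-t\in B(\lambda)$ (in particular $t\notin B(\lambda)$, since $0\notin B(\lambda)$).

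First I would use the case $t=n$. Sorting the elements of $B:=B(\lambda)$ by residue modulo $n$, closure under subtraction of $n$ says precisely that in each residue class $r$ the elements of $B$ form an initial segment $\{r,r+n,\dots,r+(m_r-1)n\}$ of some length $m_r\ge0$, while the class $r=0$ must be empty (otherwise $n\in B$ would demand $0\in B$). Thus an $n$-core with distinct parts is recorded by a vector $(m_1,\dots,m_{n-1})\in\mathbb{Z}_{\ge0}^{n-1}$, and ``no two consecutive integers in $B$'' translates into the condition that $m_r$ and $m_{r+1}$ are never both positive, for $1\le r\le n-2$; the boundary between class $n-1$ and the empty class $0$ imposes nothing.

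Next I would impose closure under subtraction of $dn-1$. Because $dn-1\equiv-1\pmod n$, subtracting $dn-1$ moves a bead in class $r$ at height $j$ to class $r+1$ at height $j-d$ when $r\le n-2$, and to class $0$ at height $j-d+1$ when $r=n-1$. Determining exactly when $b\ge dn-1$ holds and where $b-(dn-1)$ lands, one finds that the $(dn-1)$-core condition amounts to $m_r\le m_{r+1}+d$ for $1\le r\le n-2$, together with $m_{n-1}\le d-1$, the last inequality being forced because any bead that would drop into the empty class $0$ is illegal. Combined with the earlier constraint these collapse: if $m_r>0$ then $m_{r+1}=0$, hence $m_r\le d$. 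Therefore $(n,dn-1)$-core partitions with distinct parts are in bijection with the vectors $(m_1,\dots,m_{n-1})$ satisfying $0\le m_r\le d$ for $1\le r\le n-2$, $0\le m_{n-1}\le d-1$, and no two consecutive coordinates both positive.

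It remains to count these vectors, which I would do by a one-step recursion on the first coordinate. The empty vector gives $N_d(1)=1$, and for $n=2$ the single coordinate ranges over $\{0,\dots,d-1\}$, giving $N_d(2)=d$. For $n\ge3$: if $m_1=0$, deleting it is a bijection onto the admissible vectors for parameter $n-1$, contributing $N_d(n-1)$; if $m_1\in\{1,\dots,d\}$, then necessarily $m_2=0$, and deleting $m_1,m_2$ is a bijection onto the admissible vectors for parameter $n-2$, contributing $d\,N_d(n-2)$. Summing gives $N_d(n)=N_d(n-1)+d\,N_d(n-2)$. The hard part is the third step: one must carefully track the residue shift produced by subtracting $dn-1$, decide for which heights $j$ the inequality $b\ge dn-1$ actually triggers a constraint, and in particular handle the wrap-around into the forced-empty class $0$ to extract the bound $m_{n-1}\le d-1$; once the dictionary above is established, everything else is elementary bookkeeping (and the final enumeration could equally be carried out with a two-state transfer matrix, or inside Anderson's order-ideal model of $(s,t)$-cores).
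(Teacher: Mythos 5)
Your proof is correct and follows essentially the same route the paper itself develops: the abacus/beta-set characterization you derive is exactly the paper's Theorem \ref{th: beta_dn-1} together with the bijection onto $\mathcal{B}^-_{d,n-1}$ (vectors $(m_1,\dots,m_{n-1})$ with $0\le m_r\le d$, $m_{n-1}\le d-1$, and no two consecutive coordinates positive), and your counting is the same column-peeling recursion as in \eqref{eq:00_1} and \eqref{eq:G_recursion-}. The only cosmetic difference is that you peel the first residue class, which keeps the recursion self-contained in $N_d$, whereas the paper peels the last column and thereby expresses $N_d(n+1)$ through the $M_d$ basis; both are sound.
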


The $(n, dn+1)$-core analog  was obtained later by
Nath-Sellers \cite{NS2}.

\begin{thm}[Nath-Sellers   \cite{NS2}]
Let  $M_d(-1)=0$, $M_d(0)=1$, and $M_d(n)$
be the number of $(n, dn+1)$-core partitions with distinct parts for two positive integers  $d$ and~$n$.   Then
\begin{align}
\begin{split} \label{eq:Mds}
&M_d(-1)=0, \,\, M_d(0)=1,
\\
&M_d(n)=M_d(n-1)+dM_d(n-2),\ \ \text{if}\ \  n\geq 1.
\end{split}
\end{align}
\end{thm}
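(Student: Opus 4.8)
The plan is to prove the recurrence by encoding $(n,dn+1)$-core partitions with distinct parts through their beta sets and then conditioning on the largest part. First I would recall the standard dictionary: a partition $\lambda$ with distinct parts $\lambda_1>\cdots>\lambda_\ell>0$ is an $s$-core if and only if its beta set $\{\lambda_i+\ell-i:1\le i\le\ell\}$ is closed under subtracting $s$ on the $s$-runner abacus, and then translate the two conditions ``$n$-core'' and ``$(dn+1)$-core'' into a single combinatorial condition on that beta set. Since $dn+1\equiv 1\pmod n$, reading the $(dn+1)$-condition on the $n$-runner abacus links a bead on runner $r$ at level $j$ to a bead on runner $r-1$ at level $j-d$; combined with the distinct-parts separation (consecutive beta-set elements differ by at least $2$), this confines the diagram to a bounded region depending only on $n$. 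I would first extract from this the fact that the enumeration is governed by a finite ``window'' of near-$n$ integers, which both makes $\mathcal{M}_d(n)$, the set of such $\lambda$, finite and sets the stage for a recursive decomposition.

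Next I would set up the recursion by splitting $\mathcal{M}_d(n)$ according to whether $n-1$ is a part of $\lambda$. If $n-1\notin\lambda$, the window bound forces all parts to be at most $n-2$, and I would check that the $n$-core and $(dn+1)$-core conditions then degenerate exactly to the $(n-1)$-core and $(d(n-1)+1)$-core conditions, producing a bijection $\{\lambda\in\mathcal{M}_d(n):n-1\notin\lambda\}\cong\mathcal{M}_d(n-1)$, hence the term $M_d(n-1)$. If $n-1\in\lambda$, I would show that the $(dn+1)$-core condition forces a short ``staircase'' segment near the top of $\lambda$, of one of exactly $d$ possible shapes (the $d$ levels of drop noted above), and that deleting this segment and relabelling yields a partition in $\mathcal{M}_d(n-2)$; the $d$ shapes then account for the factor $d$, giving $dM_d(n-2)$. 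Finally I would verify the boundary data: $M_d(0)=1$ (the empty partition) and the purely formal convention $M_d(-1)=0$, which makes the recurrence read $M_d(1)=M_d(0)+dM_d(-1)=1$; equivalently one can check $M_d(1)=1$ and $M_d(2)=d+1$ directly.

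The main obstacle is the second case: pinning down precisely how the $(dn+1)$-core condition propagates once $n-1$ is a part, and extracting the factor $d$ cleanly. This is exactly where the interaction of the two moduli $n$ and $dn+1$ (as opposed to $dn-1$) enters, so Straub's argument for the recurrence \eqref{eq:Nds} is a close template, but the signs and the exact shape of the forced segment must be reworked, and I expect the bookkeeping of which near-$n$ integers are forced to be parts or non-parts to be the delicate point. As a cross-check I would pursue in parallel a direct weight-preserving bijection between $\mathcal{M}_d(n)$ and the colored tilings of a $1\times(n-1)$ strip by monominoes of weight $1$ and dominoes of weight $d$; since those tilings satisfy the same recurrence (transfer matrix $\bigl(\begin{smallmatrix}1&d\\1&0\end{smallmatrix}\bigr)$) and the same initial conditions, establishing the bijection would yield the theorem at once.
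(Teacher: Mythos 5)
The paper itself imports this recurrence from Nath--Sellers, but its own machinery (Theorem~\ref{th: beta_dn+1} together with the $a=b=0$ case of Theorem~\ref{th:3.1}) amounts to a proof along the lines you sketch: $\beta(\lambda)$ is confined to the grid $\{(i-1)n+j:1\le i\le d,\ 1\le j\le n-1\}$, each residue class $j$ forms a ``tower'' and no two adjacent residues are both occupied in the bottom row, and one recurses on the last column. Your first paragraph is consistent with this.

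The gap is in your case split. You condition on whether $n-1$ is a \emph{part} of $\lambda$ and claim that if $n-1\notin\lambda$ then every part is at most $n-2$. Both claims are false: $(5,2)$ is a $(4,9)$-core partition with distinct parts, $3=n-1$ is not one of its parts, yet it has a part $5>n-2$; and splitting the $11$ elements of $\mathcal{C}_{4,9}$ by whether $3$ is a part gives $7+4$, which cannot match $M_2(3)+2M_2(2)=5+6$. The correct split is on whether $n-1\in\beta(\lambda)$, i.e.\ whether the residue class $n-1\pmod n$ is occupied. If it is not, re-indexing the grid by $(i-1)n+j\mapsto(i-1)(n-1)+j$ gives a bijection onto $\mathcal{C}_{n-1,d(n-1)+1}$ (note this genuinely changes the partition; the two core conditions do not simply ``degenerate'' on $\lambda$ itself). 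If it is, that column is a tower of one of exactly $d$ heights, the adjacent column $n-2$ is forced empty by the distinct-parts condition, and the remaining columns re-index onto $\mathcal{C}_{n-2,d(n-2)+1}$, giving the term $dM_d(n-2)$. Your fallback bijection with monomino/domino tilings (dominoes weighted $d$) is sound and is exactly what the grid picture delivers --- a nice subset is an independent set of columns, each occupied column carrying one of $d$ heights --- so repairing the split along these lines completes the argument.
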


Table \ref{tab:1} gives the first few values for $N_d(n)$ and $M_d(n)$.

\begin{table}[h!]
\begin{center}
\caption{The number of $(n, dn\pm 1)$-core partitions with distinct parts for $1\leq n \leq 6$.}\label{tab:1}\vspace{-2mm}
\end{center}
\begin{center}
\begin{tabular}{|l|l|l|l|l|l|l|l|l|l|} \hline
n   & 1 & 2 & 3  & 4 & 5 & 6
\\    
\hline
$N_d(n)$  & 1 & d & $2d$ & $d^2+2d$ & $3d^2+2d$ & $d^3+5d^2+2d$ \\
\hline
$M_d(n)$   & 1  & $d+1$ & $2d+1$ & $d^2+3d+1$ & $3d^2+4d+1$ & $d^3+6d^2+5d+1$  \\
\hline
\end{tabular}  
\end{center}
\end{table}
It is easy to derive that, when $d\neq 2$,
\begin{align}\label{eq:1.1}
M_d(n)=\frac{d(d-1)N_d(n)-N_d(n+1)}{d(d-2)}
\end{align}
and
\begin{align}\label{eq:1.2}
M_d(n-1)=\frac{(d-1)N_d(n+1)-d N_d(n)}{d(d-2)}.
\end{align}

Recently, the largest sizes of the above two kinds of partitions were given by the first author \cite{Xiong2}. Zaleski conjectured an explicit formula
 for the average size of $(n, dn-1)$-core partitions with distinct parts in \cite{Za2}. Furthermore, Zaleski conjectured some polynomiality properties for higher moments of their sizes.

In this paper, we derive  results on moments of sizes for random $(n, dn\pm 1)$-core partitions with distinct parts. The $(n, dn-1)$-core case proves several conjectures of Zaleski \cite{Za2}.  Let $\mathcal{C}_{n,dn+1}$ and $\mathcal{C}_{n,dn-1}$ be the sets of $(n,dn+1)$-core and $(n,dn-1)$-core partitions with distinct parts respectively. Our main results are stated next. The $(n,dn-1)$-core case in Theorems \ref{th:main1} and \ref{th:main2} are  equivalent to Zaleski's Conjectures $3.5$ and $3.1$  in  \cite{Za2}, respectively.

\begin{thm}
[see Conjecture 3.5 of Zaleski \cite{Za2}]\label{th:main1}
Let $k$ be a  positive integers.
The $k$-th power sums \begin{align*}
\sum_{\la\in \mathcal{C}_{n,dn+1}}|\la|^k
\qquad
\text{and}
\qquad
\sum_{\la\in \mathcal{C}_{n,dn-1}}|\la|^k
\end{align*} for sizes of partitions in $\mathcal{C}_{n,dn+1}$ and in $\mathcal{C}_{n,dn-1}$
   are of the form \begin{align}\label{eq:1.3}
   A(n,d) M_d(n) + B(n,d)M_d(n+1),
   \end{align}
   where $A(n,d)$ and $B(n,d)$ are some  polynomials of $n$ with degrees at most $2k$, whose coefficients are rational functions in $d$.
\end{thm}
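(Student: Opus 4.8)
The plan is to realize each partition $\lambda\in\mathcal{C}_{n,dn\pm1}$ combinatorially via its beta set (first-column hook lengths, equivalently the set of corners of the partition drawn as a lattice path), and to show that the size $|\lambda|$ is an explicit quadratic polynomial in the coordinates of a bounded combinatorial object, so that $\sum_\lambda|\lambda|^k$ becomes a sum of monomials of bounded degree over that object. Concretely, I would first recall (from Straub, Nath--Sellers, and the first author's work) the bijection between $(n,dn\pm1)$-core partitions with distinct parts and certain subsets or binary words of length roughly $n$ subject to a separation condition; the key point is that such a partition is determined by choosing, in each of $O(d)$ ``bands,'' how many parts it uses, and the number of valid configurations satisfies exactly the Fibonacci-type recursion \eqref{eq:Nds}--\eqref{eq:Mds}. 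This gives a parametrization of $\mathcal{C}_{n,dn\pm1}$ by tuples of nonnegative integers $(a_1,\dots,a_m)$ (with $m$ bounded in terms of $d$) ranging over a region cut out by linear inequalities whose right-hand sides are affine in $n$.

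Next I would write $|\lambda|$ in terms of these parameters. Using the beta-set description, the size is $|\lambda|=\sum_{b\in\beta(\lambda)}b-\binom{|\beta(\lambda)|}{2}$, and because the beta set is built band by band, this is a polynomial of degree $2$ in the $a_i$ and in $n$ jointly — each part contributes linearly and the $\binom{\cdot}{2}$ term contributes a quadratic in the partial sums of the $a_i$. Raising to the $k$-th power, $|\lambda|^k$ is a polynomial of degree $2k$ in $(a_1,\dots,a_m,n)$. Therefore $\sum_{\lambda}|\lambda|^k$ is a sum, over the lattice-point region described above, of terms $n^{j}\prod_i a_i^{e_i}$ with $j+\sum e_i\le 2k$. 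The core computation is then to evaluate iterated sums $\sum_{a_1}\cdots\sum_{a_m}$ of such monomials over the Fibonacci-type region. Each such sum, carried out one variable at a time by Faulhaber's formula, produces a polynomial in $n$ and in the remaining variables together with ``boundary'' contributions; tracking these, one finds that the whole expression collapses into the form $A(n,d)M_d(n)+B(n,d)M_d(n+1)$ — the two Fibonacci sequences $M_d(n),M_d(n+1)$ appear precisely because summing a polynomial weight against the characteristic function of the Fibonacci region reproduces, up to polynomial factors, the generating recursion \eqref{eq:Mds}, and any such sequence is a $\mathbb{Q}(d)[n]$-linear combination of these two. The degree bound $2k$ on $A,B$ follows by bookkeeping: summing a degree-$2k$ polynomial over $O(d)$ variables each ranging over an interval of length $O(n)$ gives degree $2k$ after absorbing the Fibonacci factors.

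To organize step two cleanly, I would prove an auxiliary lemma of the following shape: for any polynomial $P(n,x)$ of degree $\le D$, the sum $\sum_{x}P(n,x)$ over the valid configurations extending a fixed partial configuration is again $A'(n,d)M_d(\cdot)+B'(n,d)M_d(\cdot)$ with $\deg A',\deg B'\le D$, and then iterate this lemma $m$ times. This reduces everything to the one-variable ``telescoping against Fibonacci'' identity, which can be checked directly from \eqref{eq:Mds}. The equivalence $\eqref{eq:1.1}$--$\eqref{eq:1.2}$ then lets me pass freely between $M_d$ and $N_d$, so the $(n,dn-1)$ statement and the $(n,dn+1)$ statement are interchangeable for $d\ne2$, with the case $d=2$ handled separately (there $N_2(n)$ and $M_2(n)$ are themselves polynomials in $n$, so the claim becomes polynomiality outright and is easier).

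The main obstacle I anticipate is bookkeeping the boundary terms in the iterated summation: when one sums a monomial in $a_i$ over an interval whose endpoints depend on $n$ and on the already-summed variables, Faulhaber's formula introduces lower-order ``edge'' pieces, and one must check that, after all $m$ summations, these reassemble exactly into the two-term Fibonacci form rather than into a longer linear recurrence — equivalently, that the characteristic polynomial $x^2-x-d$ of \eqref{eq:Mds} governs the whole sum and no extra roots appear. Making this precise — ideally by an induction whose hypothesis is exactly the ansatz \eqref{eq:1.3} with the degree bound — is the technical heart of the argument; once the inductive statement is set up correctly, each step is a routine (if lengthy) polynomial manipulation.
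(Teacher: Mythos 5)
Your reduction is the same as the paper's: pass to $\beta$-sets, note that $|\lambda|=\sum_{x\in\beta(\lambda)}x-\binom{|\beta(\lambda)|}{2}$ is quadratic in the combinatorial data, and hence that $\sum_\lambda|\lambda|^k$ is a sum of monomials of total degree at most $2k$ over the admissible configurations. But there is a genuine gap at the step you yourself call the technical heart, and your sketch of how to fill it rests on a mis-description of the region. For this theorem ($d$ fixed, $n\to\infty$) the free data is: which residue classes $j\in\{1,\dots,n-1\}$ are occupied (no two adjacent) and, for each occupied class, a height in $\{1,\dots,d\}$. The number of free parameters therefore grows like $n/2$; it is \emph{not} ``$m$ bounded in terms of $d$ with each variable ranging over an interval of length $O(n)$.'' That picture (boundedly many variables each ranging over $\{0,\dots,d-1\}$, summed by Faulhaber) is the right one for the companion result, Theorem \ref{th:main2} (polynomiality in $d$ for fixed $n$), but it does not apply here, so the plan ``iterate the one-variable lemma $m$ times with $m=O(d)$'' cannot get off the ground, and the claimed degree bookkeeping (``degree $2k$ after absorbing the Fibonacci factors'') is not established by it.

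What the paper actually does, and what your argument still needs, is an induction on $n$ (the number of residue classes) carried out simultaneously for the whole family of weighted sums $G^{+}_{d,m,a,b}(n)=\sum_{I}\sigma_m(I)^a|I|^b$ over nice subsets: conditioning on the last residue class gives a two-term Fibonacci-type recursion in $n$ whose inhomogeneous part involves only pairs $(a',b')$ with $a'+b'<a+b$. Passing to generating functions, one shows $\sum_n G^{+}_{d,m,a,b}(n)q^n=\sum_{i=1}^{2a+b+1}P_{a,b,i}(d,m,q)/(1-q-dq^2)^i$ with $\deg_m P_{a,b,i}\le 2a+b+1-i$, and separately that the coefficient of $q^n$ in $(1-q-dq^2)^{-i}$ is $A(n,d)M_d(n)+B(n,d)M_d(n+1)$ with $\deg_n A,\deg_n B\le i-1$. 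The bound $\deg_m+\deg_n\le 2a+b$ then survives the specialization $m=n$, which is essential because the weight $\sigma_m(I)$ itself carries the modulus $n$. Your proposal asserts the conclusion of this analysis (``the whole expression collapses into the form $A\,M_d(n)+B\,M_d(n+1)$'' with the degree bound) but supplies neither the recursion that produces the denominator $(1-q-dq^2)^i$ with controlled pole order nor the separate tracking of degrees in $m$ and $n$; without these, neither the two-term form nor the bound $2k$ is justified.
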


\noindent\textbf{Remark.} In the above theorem, we use $M_d(n)$ and $ M_d(n+1) $ as a basis, while $N_d(n)$ and $ N_d(n+1) $ are used in the original statement of Zaleski's conjectures in \cite{Za2}. As mentioned by Zaleski, some of his conjectures are anomalous for the case $d=2$. The use of the basis $M_d(n)$ and $ M_d(n+1) $  avoids this problem. That is, the form \eqref{eq:1.3} always holds for any $d\geq 1$. Also, by \eqref{eq:1.1} and \eqref{eq:1.2} we know, when $d\neq 2$, $M_d(n)$ and $ M_d(n+1) $ in   \eqref{eq:1.3} can be replaced by $N_d(n)$ and $ N_d(n+1) $.

\begin{thm}
[see Conjecture 3.1 of Zaleski   \cite{Za2}]\label{th:main2}
Let $n$ and $k$ be two given positive integers.
Then the $k$-th power sums \begin{align*}
\sum_{\la\in \mathcal{C}_{n,dn+1}}|\la|^k
\qquad
\text{and}
\qquad
\sum_{\la\in \mathcal{C}_{n,dn-1}}|\la|^k
\end{align*}
are polynomials of $d$ with degrees at most $2k+\lfloor n/2\rfloor$.

\end{thm}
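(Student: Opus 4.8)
The plan is to establish a recursion in $n$ for the power sums
\[
P_k(n):=\sum_{\la\in\mathcal{C}_{n,dn-1}}|\la|^k,
\qquad
Q_k(n):=\sum_{\la\in\mathcal{C}_{n,dn+1}}|\la|^k,
\]
that mirrors the generalized Fibonacci recurrences \eqref{eq:Nds} and \eqref{eq:Mds}, and then to read off the degree in $d$ by induction on $n$. The structural input I would use is a combinatorial refinement of \eqref{eq:Nds} coming from the beta-set description of $(n,dn\pm1)$-core partitions with distinct parts (cf.\ Straub \cite{Straub}, Nath--Sellers \cite{NS2}, and the beta-set analysis carried out below): for $n\ge 3$, every $\la\in\mathcal{C}_{n,dn-1}$ arises \emph{either} from a unique $\la'\in\mathcal{C}_{n-1,(n-1)d-1}$ by one size-shifting operation, with $|\la|=|\la'|+a(n)$, \emph{or} from a unique $\la''\in\mathcal{C}_{n-2,(n-2)d-1}$ by one of $d$ size-shifting operations indexed by $i\in\{0,1,\dots,d-1\}$, with $|\la|=|\la''|+b(n,i)$; here $a(n)$ and $b(n,i)$ are polynomials in $n$ and $i$, of degree at most $2$ in $i$, and with no further dependence on $d$. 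The $(n,dn+1)$-core case is identical after replacing $(n-j)d-1$ by $(n-j)d+1$.

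Granting this, the binomial theorem converts the decomposition into the recursion
\begin{align}\label{eq:proposal-rec}
P_k(n)=\sum_{j=0}^{k}\binom{k}{j}a(n)^{k-j}P_j(n-1)+\sum_{i=0}^{d-1}\sum_{j=0}^{k}\binom{k}{j}b(n,i)^{k-j}P_j(n-2)\qquad(n\ge 3),
\end{align}
and the same for $Q_k$. For the base cases I note that $\mathcal{C}_{1,d-1}$ and $\mathcal{C}_{1,d+1}$ each contain only the empty partition, so $P_k(1)=Q_k(1)=0$ for $k\ge1$, while $\mathcal{C}_{2,2d-1}$ (resp.\ $\mathcal{C}_{2,2d+1}$) consists precisely of the staircases $\delta_m=(m,m-1,\dots,1)$ for $0\le m\le d-1$ (resp.\ $0\le m\le d$), so that $P_k(2)=\sum_{m=0}^{d-1}\binom{m+1}{2}^{\,k}$ is a polynomial in $d$ of degree $2k+1=2k+\lfloor 2/2\rfloor$, and likewise for $Q_k(2)$.

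I would then prove $\deg_d P_k(n)\le 2k+\lfloor n/2\rfloor$, and the same for $Q_k$, by induction on $n$ (the inductive claim being the bound for all $j\ge0$ at once). In the first sum of \eqref{eq:proposal-rec}, $a(n)$ carries no $d$, so each summand has $d$-degree at most $2j+\lfloor (n-1)/2\rfloor\le 2k+\lfloor n/2\rfloor$. In the second sum, $b(n,i)^{k-j}$ is a polynomial in $i$ of degree at most $2(k-j)$, and since $\sum_{i=0}^{d-1}f(i)$ is a polynomial in $d$ of degree $1+\deg f$ for any polynomial $f$, the inner sum over $i$ is a polynomial in $d$ of degree at most $2(k-j)+1$; multiplying by $P_j(n-2)$, whose $d$-degree is at most $2j+\lfloor (n-2)/2\rfloor=2j+\lfloor n/2\rfloor-1$ by the inductive hypothesis, gives $d$-degree at most $2(k-j)+1+2j+\lfloor n/2\rfloor-1=2k+\lfloor n/2\rfloor$. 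Thus $\deg_d P_k(n)\le 2k+\lfloor n/2\rfloor$; and since \eqref{eq:proposal-rec} uses only polynomial operations while the base values are polynomials in $d$, each $P_k(n)$ and $Q_k(n)$ is indeed a polynomial in $d$.

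The hard part will be the first paragraph: setting up the recursive decomposition of $\mathcal{C}_{n,dn\pm1}$ in beta-set terms and, crucially, verifying that the size-shifts $a(n)$ and $b(n,i)$ are polynomials of degree at most $2$ in the colour variable $i$ with no hidden $d$-dependence --- this is exactly what forces the $d$-degree to increase by at most $1$ for every two increments of $n$. Once those shifts are made explicit, the induction above is routine. (One could instead try to deduce Theorem~\ref{th:main2} from Theorem~\ref{th:main1} by bounding the $d$-degrees of the coefficients $A(n,d),B(n,d)$ there, but this appears to require essentially the same analysis, so I would favour the direct recursion.)
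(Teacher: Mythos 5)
Your degree-counting skeleton is exactly the paper's: a two-term recursion in $n$ mirroring \eqref{eq:Nds}, the observation that $\sum_{i=1}^{d}f(i)$ is a polynomial in $d$ of degree $1+\deg f$, and an induction showing the $d$-degree grows by one for every two increments of $n$ (this is Theorems \ref{th:4.5+} and \ref{th:4.5-} in the paper). But the structural claim you defer to ``the hard part'' is false, and the argument collapses there. There is no decomposition of $\mathcal{C}_{n,dn\pm1}$ into one copy of $\mathcal{C}_{n-1,(n-1)d\pm1}$ and $d$ copies of $\mathcal{C}_{n-2,(n-2)d\pm1}$ in which the size shifts depend only on $n$ and the branch index $i$. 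Concretely, for $d=1$ the multiset of sizes of $\mathcal{C}_{5,4}$ is $\{0,1,2,3,3\}$, while $\mathcal{C}_{4,3}$ and $\mathcal{C}_{3,2}$ have size multisets $\{0,1,2\}$ and $\{0,1\}$; no constants $a,b$ make $\{a,a+1,a+2\}\cup\{b,b+1\}$ equal to $\{0,1,2,3,3\}$. The obstruction is visible in the $\beta$-set picture: passing from modulus $n$ to modulus $n-1$ or $n-2$ changes $\sigma_n(I)$ by $\sum_{(r,s)\in I}(r-1)$ or $2\sum_{(r,s)\in I}(r-1)$, and appending a column of $i$ cells changes the correction term $\binom{|I|}{2}$ by $i|I|+\binom{i}{2}$ --- both depend on the partition being lifted, not just on $n$ and $i$. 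So the recursion you display for $P_k(n)$ does not hold.

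The paper's device for repairing precisely this is to decouple the modulus from the induction variable: it studies $G^{\pm}_{d,m,a,b}(n)=\sum_{I}\sigma_m(I)^a|I|^b$ over nice subsets of $\mathcal{A}_{d,n}$, where $\sigma_m(I)=\sum_{(i,j)\in I}((i-1)m+j)$. For fixed $m$, the column-removal recursion \eqref{eq:G_recursion} does have partition-independent shifts ($\sigma_m$ increases by $\binom{i}{2}m+in$ and $|I|$ by $i$), and carrying all mixed moments $\sigma_m^a|I|^b$ simultaneously absorbs the $|I|$-dependence of the $\binom{|I|}{2}$ correction. One then specializes $m=n$ and expands $\bigl(\sigma_n(I)-\tfrac{|I|^2}{2}+\tfrac{|I|}{2}\bigr)^k$ via Lemmas \ref{th:size_dn+1} and \ref{th:size_dn-1}, so that only terms with $2a+b\le 2k$ appear and the bound $2k+\lfloor n/2\rfloor$ follows. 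Your induction would go through essentially verbatim once restated for this two-parameter family; as written, it rests on a recursion that is not true.
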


Recall that  $X_{n,dn-1}$ and $X_{n,dn+1}$ are sizes of uniform random $(n,dn-1)$-core and $(n,dn+1)$-core partitions with distinct parts,  respectively.
By Theorems \ref{th:main1} and \ref{th:main2} we derive the following asymptotic formulas  when $d$ is fixed or $n$ is fixed, respectively. 

\begin{thm}
\label{th:main12}
Let $d$ and $k$ be two given positive integers.
Then the $k$-th moments of $X_{n,dn+1}$ and $X_{n,dn-1}$ are asymptotically some polynomials of n with degrees at most $2k$, when $n$ tends to infinity. That is, there exist some constants $A_{d,k}$ and $B_{d,k}$ such that
\begin{align}\label{eq:main121}
\mathbb{E}[ X_{n,dn+1}^k ]= A_{d,k}\,  n^{2k}  + O(n^{2k-1})
\end{align}
and
\begin{align}\label{eq:main122}
\mathbb{E}[ X_{n,dn-1}^k ]= B_{d,k}\,  n^{2k}  + O(n^{2k-1}).
\end{align}
\end{thm}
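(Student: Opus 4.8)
The plan is to reduce Theorem \ref{th:main12} directly to Theorem \ref{th:main1} together with the elementary asymptotics of the linear recurrences \eqref{eq:Nds} and \eqref{eq:Mds}. First I would record the growth of $M_d(n)$ and $N_d(n)$. Both sequences satisfy $u(n)=u(n-1)+d\,u(n-2)$, whose characteristic polynomial $x^2-x-d$ has the two real roots
\begin{align*}
\varphi_d=\frac{1+\sqrt{1+4d}}{2}>1,\qquad \psi_d=\frac{1-\sqrt{1+4d}}{2},
\end{align*}
with $|\psi_d|=\frac{\sqrt{1+4d}-1}{2}<\varphi_d$ for every $d\geq 1$. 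Hence $M_d(n)=\alpha_d\varphi_d^{\,n}+\beta_d\psi_d^{\,n}$ and $N_d(n)=\alpha'_d\varphi_d^{\,n}+\beta'_d\psi_d^{\,n}$ with $\alpha_d,\alpha'_d>0$ (positivity is forced because $M_d(n),N_d(n)\geq 1$ while $\varphi_d^{\,n}\to\infty$). Setting $\theta:=|\psi_d|/\varphi_d\in(0,1)$, this yields the key estimates
\begin{align*}
\frac{M_d(n+1)}{M_d(n)}=\varphi_d+O(\theta^{\,n}),\qquad \frac{M_d(n)}{N_d(n)}=\frac{\alpha_d}{\alpha'_d}+O(\theta^{\,n}),\qquad \frac{M_d(n+1)}{N_d(n)}=\varphi_d\,\frac{\alpha_d}{\alpha'_d}+O(\theta^{\,n}).
\end{align*}

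Next I would invoke Theorem \ref{th:main1}. For our fixed integer $d\geq 1$ it provides polynomials $A(n),B(n),A'(n),B'(n)$ in $n$, each of degree at most $2k$ (their coefficients, rational functions of $d$, are genuine numbers here since by the Remark following Theorem \ref{th:main1} the form \eqref{eq:1.3} holds for all $d\geq 1$ in the $M_d$-basis), such that
\begin{align*}
\sum_{\la\in\mathcal C_{n,dn+1}}|\la|^k=A(n)M_d(n)+B(n)M_d(n+1),\qquad \sum_{\la\in\mathcal C_{n,dn-1}}|\la|^k=A'(n)M_d(n)+B'(n)M_d(n+1).
\end{align*}
Dividing the first identity by $|\mathcal C_{n,dn+1}|=M_d(n)$ and the second by $|\mathcal C_{n,dn-1}|=N_d(n)$, and substituting the ratio estimates above, I would get
\begin{align*}
\mathbb{E}[X_{n,dn+1}^k]=\bigl(A(n)+\varphi_d B(n)\bigr)+O(\theta^{\,n}n^{2k}),\qquad \mathbb{E}[X_{n,dn-1}^k]=\frac{\alpha_d}{\alpha'_d}\bigl(A'(n)+\varphi_d B'(n)\bigr)+O(\theta^{\,n}n^{2k}).
\end{align*}
In each case the bracketed term is a polynomial in $n$ of degree at most $2k$ and the remainder is $o(1)$, hence $O(n^{2k-1})$; taking $A_{d,k}$ and $B_{d,k}$ to be the coefficients of $n^{2k}$ in the respective polynomial parts of the right-hand sides (possibly zero) then gives \eqref{eq:main121} and \eqref{eq:main122}.

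The genuine content here is entirely carried by Theorem \ref{th:main1}, which is assumed; within this derivation the only delicate point — and what I would call the hard part — is the error analysis. One must verify that the deviation of $M_d(n+1)/M_d(n)$ (and of $M_d(n)/N_d(n)$) from its limit decays geometrically, so that after multiplication by a degree-$2k$ polynomial it stays $o(1)$ rather than contaminating the order-$n^{2k}$ term; this is exactly what the strict inequality $|\psi_d|<\varphi_d$ provides. Beyond this bookkeeping, no obstacle remains.
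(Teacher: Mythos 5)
Your proposal is correct and follows essentially the same route as the paper: both reduce the statement to Theorem \ref{th:main1}, write $M_d(n)$ (and $N_d(n)=M_d(n)-M_d(n-2)$) via the Binet-type formula for the roots of $x^2-x-d$, and observe that the ratios $M_d(n+1)/M_d(n)$ and $M_d(n)/N_d(n)$ converge geometrically so that division by the partition counts leaves a polynomial of degree at most $2k$ plus an $o(1)$ error. Your version merely spells out the error analysis that the paper leaves implicit.
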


\begin{thm}
\label{th:main22}
Let $n\geq 2$ and $k\geq 1$ be two given integers.
Then the $k$-th moments of $X_{n,dn+1}$ and $X_{n,dn-1}$ are asymptotically some  polynomials of $d$ with degrees at most $2k$,
 when $d$ tends to infinity. That is, there exist some constants $C_{n,k}$ and $D_{n,k}$ such that
\begin{align}\label{eq:main221}
\mathbb{E}[ X_{n,dn+1}^k ]= C_{n,k} \, d^{2k}  + O(d^{2k-1})
\end{align}
and
\begin{align}\label{eq:main222}
\mathbb{E}[ X_{n,dn-1}^k ]= D_{n,k}\,  d^{2k}  + O(d^{2k-1}).
\end{align}

\end{thm}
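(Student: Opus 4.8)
The plan is to obtain Theorem~\ref{th:main22} as a direct consequence of Theorem~\ref{th:main2} together with an elementary analysis of the denominators $M_d(n)$ and $N_d(n)$ as polynomials in $d$. Since $|\mathcal{C}_{n,dn+1}|=M_d(n)$ and $|\mathcal{C}_{n,dn-1}|=N_d(n)$, we may write
\[
\mathbb{E}[X_{n,dn+1}^k]=\frac{1}{M_d(n)}\sum_{\la\in\mathcal{C}_{n,dn+1}}|\la|^k,
\qquad
\mathbb{E}[X_{n,dn-1}^k]=\frac{1}{N_d(n)}\sum_{\la\in\mathcal{C}_{n,dn-1}}|\la|^k .
\]
By Theorem~\ref{th:main2}, each numerator is a polynomial in $d$ of degree at most $2k+\lfloor n/2\rfloor$, so the whole argument reduces to controlling the denominators.

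First I would show that $M_d(n)$ and $N_d(n)$ are polynomials in $d$ of degree exactly $m:=\lfloor n/2\rfloor$ with strictly positive leading coefficient. This is a short induction on $n$ using the recurrences \eqref{eq:Nds} and \eqref{eq:Mds}: every coefficient produced along the way is nonnegative, so no cancellation occurs, and $\deg\bigl(d\,N_d(n-2)\bigr)=1+\lfloor (n-2)/2\rfloor=\lfloor n/2\rfloor\ge\lfloor (n-1)/2\rfloor=\deg N_d(n-1)$, with equality precisely when $n$ is odd; hence $N_d(n)=N_d(n-1)+d\,N_d(n-2)$ has degree $\lfloor n/2\rfloor$ and positive leading coefficient. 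The base cases $N_d(1)=1$, $N_d(2)=d$ and $M_d(0)=M_d(1)=1$ have the claimed degrees, and the identical computation applies to $M_d(n)$.

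With this in hand, each $k$-th moment is a rational function $P(d)/Q(d)$ in which $Q\in\{M_d(n),N_d(n)\}$ has degree $m$ and $\deg P\le 2k+m$. Euclidean division gives $P=RQ+S$ with $\deg S<m$ and $\deg R\le 2k$, so
\[
\mathbb{E}[X_{n,dn\pm1}^k]=R(d)+\frac{S(d)}{Q(d)} .
\]
Since $\deg S<\deg Q$ and $Q$ has positive leading coefficient (hence is nonzero for all large $d$), the term $S(d)/Q(d)$ is $O(d^{-1})$. Letting $C_{n,k}$, respectively $D_{n,k}$, denote the coefficient of $d^{2k}$ in the corresponding polynomial $R(d)$, and absorbing the lower-order terms of $R$ together with $O(d^{-1})$ into $O(d^{2k-1})$ (legitimate because $k\ge1$), we obtain \eqref{eq:main221} and \eqref{eq:main222}; in particular each moment is asymptotic to the polynomial $R(d)$ of degree at most $2k$.

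I do not expect a genuine obstacle in this deduction, since all the real content is already packaged in Theorem~\ref{th:main2}; the one point that needs care above is that the denominator has degree \emph{exactly} $\lfloor n/2\rfloor$, which is precisely what converts the bound $2k+\lfloor n/2\rfloor$ on the numerator into the bound $2k$ on the polynomial part. If one additionally wants $C_{n,k},D_{n,k}>0$ — that is, that the limiting polynomial really attains degree $2k$ — it suffices to verify that the power sum has degree exactly $2k+\lfloor n/2\rfloor$ in $d$; this can be read off from the beta-set description of the partitions in $\mathcal{C}_{n,dn\pm1}$ (the largest size grows quadratically in $d$, and the count is $\Theta(d^{\lfloor n/2\rfloor})$), but it is not required for the statement as given.
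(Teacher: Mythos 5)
Your proposal is correct and follows essentially the same route as the paper: the paper's proof likewise observes that $M_d(n)$ and $N_d(n)$ are polynomials in $d$ of degree $\lfloor n/2\rfloor$ and then deduces the result from Theorem \ref{th:main2}. You merely supply the details (the induction on the recurrences and the Euclidean division) that the paper leaves as "easy to see."
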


\medskip

Moreover, when $d=1$, we  derive the leading term in the asymptotic formula of $\mathbb{E}[ X_{n,n+1}^k ]$.
\begin{thm}
\label{th:main3}
Let $k$ be a given positive integer.
Then the $k$-th moment of $X_{n,n+1}$ satisfies the following asymptotic formula:
$$
\mathbb{E}[ X_{n,n+1}^k ]= \left(\frac{1}{10}\right)^k  \,  n^{2k}  + O(n^{2k-1}).
$$
\end{thm}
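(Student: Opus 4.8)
The plan is to reduce to the computation of a single limit and then evaluate it by a concentration argument. By Theorem~\ref{th:main12} with $d=1$ we already know $\mathbb{E}[X_{n,n+1}^k]=A_{1,k}\,n^{2k}+O(n^{2k-1})$ for some constant $A_{1,k}$, so $A_{1,k}=\lim_{n\to\infty}\mathbb{E}[X_{n,n+1}^k]/n^{2k}$ and it suffices to prove this limit equals $(1/10)^k$. (One could equally start from Theorem~\ref{th:main1} with $d=1$: since $M_1(n)$ is a Fibonacci number and $M_1(n+1)/M_1(n)\to(1+\sqrt5)/2$ exponentially fast, $\mathbb{E}[X_{n,n+1}^k]$ agrees up to an exponentially small error with a polynomial in $n$ of degree at most $2k$, and again the task is to compute its leading coefficient; in fact the direct argument below produces the sharp error term $O(n^{2k-1})$ on its own.)

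The main tool is the beta-set dictionary used throughout the paper. A partition $\la\in\mathcal{C}_{n,n+1}$ is determined by the set $S$ of its first-column hook lengths, and one checks that $\la\mapsto S$ is a bijection between $\mathcal{C}_{n,n+1}$ and the family of subsets $S\subseteq\{1,2,\dots,n-1\}$ containing no two consecutive integers, under which
\[
|\la|=\sum_{i\in S} i-\binom{|S|}{2}.
\]
Here non-consecutiveness of $S$ encodes distinctness of the parts, the condition $\max S\le n-1$ encodes the $(n,n+1)$-core property, and the cardinality of this family is the Fibonacci number $M_1(n)$. Consequently $X_{n,n+1}$ has the same law as $Y:=\sum_{i\in S}i-\binom{|S|}{2}$, where $S$ is a uniformly random such subset of $\{1,\dots,n-1\}$.

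I would then analyze $Y$ through the transfer matrix of the uniform non-consecutive subset, i.e.\ the one-dimensional hard-core model at unit fugacity, whose transfer matrix $\left(\begin{smallmatrix}1&1\\1&0\end{smallmatrix}\right)$ has top eigenvalue $\phi=(1+\sqrt5)/2$ and spectral ratio $\phi^{-2}<1$. This yields $\mathbb{P}(i\in S)=\rho+O(\phi^{-2\min(i,\,n-i)})$ with bulk density $\rho=1/(\phi^2+1)=(5-\sqrt5)/10$, together with exponentially decaying correlations $\operatorname{Cov}(\mathbf{1}_{i\in S},\mathbf{1}_{j\in S})=O(\phi^{-2|i-j|})$. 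Summing the marginals gives $\mathbb{E}\big[\sum_{i\in S}i\big]=\tfrac{\rho}{2}n^2+O(n)$ and $\mathbb{E}\binom{|S|}{2}=\tfrac{\rho^2}{2}n^2+O(n)$, whence, using $\rho(1-\rho)=(5-\sqrt5)(5+\sqrt5)/100=1/5$,
\[
\mathbb{E}[Y]=\frac{\rho(1-\rho)}{2}\,n^2+O(n)=\frac{n^2}{10}+O(n),
\]
while the correlation decay gives $\operatorname{Var}(Y)=O(n^3)$ (both $\operatorname{Var}(\sum_i i\,\mathbf{1}_{i\in S})$ and $\operatorname{Var}(\binom{|S|}{2})$ are $O(n^3)$, the latter via $\operatorname{Var}(|S|)=O(n)$).

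Finally I would combine this with the a priori bound $0\le Y=|\la|\le\sum_{i=1}^{n-1}i=O(n^2)$ (in fact $\le\lfloor n(n+1)/6\rfloor$ by the known largest-size formula \cite{Xiong}). Since $|Y-\mathbb{E}Y|=O(n^2)$ deterministically, for $j\ge2$ one gets $\mathbb{E}[(Y-\mathbb{E}Y)^j]\le\big(O(n^2)\big)^{j-2}\operatorname{Var}(Y)=O(n^{2j-1})$, and $\mathbb{E}[(Y-\mathbb{E}Y)]=0$. Expanding $\mathbb{E}[Y^k]=\sum_{j=0}^k\binom{k}{j}(\mathbb{E}Y)^{k-j}\,\mathbb{E}[(Y-\mathbb{E}Y)^j]$, the $j=0$ term is $\big(\tfrac{n^2}{10}+O(n)\big)^k=(1/10)^k n^{2k}+O(n^{2k-1})$ and each $j\ge2$ term is $O(n^{2(k-j)})\cdot O(n^{2j-1})=O(n^{2k-1})$, so $\mathbb{E}[X_{n,n+1}^k]=\mathbb{E}[Y^k]=(1/10)^k n^{2k}+O(n^{2k-1})$. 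The conceptual heart is the bijection and the identity $|\la|=\sum_{i\in S}i-\binom{|S|}{2}$, which turn the theorem into a second-moment concentration statement about a one-dimensional hard-core model; the only real work left is the honest transfer-matrix bookkeeping behind the $O(n)$ error in $\mathbb{E}[Y]$ and the $O(n^3)$ variance bound, which is routine given the spectral gap.
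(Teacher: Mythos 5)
Your proposal is correct, and it reaches the theorem by a genuinely different route than the paper. Both arguments start from the same combinatorial dictionary (for $d=1$ the $\beta$-set bijection identifies $\mathcal{C}_{n,n+1}$ with the subsets of $\{1,\dots,n-1\}$ having no two consecutive elements, with $|\la|=\sum_{i\in S}i-\binom{|S|}{2}$; this is exactly Theorem \ref{th: beta_dn+1} and Lemma \ref{th:size_dn+1} specialized to $d=1$). From there the paper expands $\bigl(\sigma(I)-|I|^2/2+|I|/2\bigr)^k$ multinomially and computes the exact leading coefficient $C_{a,b}=2^{-a}5^{-(a+b+1)/2}\alpha^{2-a-b}$ of every mixed moment $G^+_{1,0,a,b}(n)=\sum_I \sigma(I)^a|I|^b$ by induction on $a+b$, solving the linear relation $(\alpha+2)(2a+b)C_{a,b}=aC_{a-1,b}+bC_{a,b-1}$ extracted from the recursion \eqref{eq:G_recursion}; the final answer emerges from a binomial identity $(\sqrt5\alpha-1)^k=\alpha^{2k}$. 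You instead prove only the first-moment asymptotic $\mathbb{E}[Y]=n^2/10+O(n)$ and a variance bound $\operatorname{Var}(Y)=O(n^3)$ via the transfer matrix of the hard-core model (bulk density $\rho=(5-\sqrt5)/10$, identity $\rho(1-\rho)=1/5$, exponential correlation decay at rate $\phi^{-2}$), and then leverage the deterministic bound $Y=O(n^2)$ to get $\mathbb{E}[(Y-\mathbb{E}Y)^j]=O(n^{2j-1})$ for $j\ge 2$, so that $\mathbb{E}[Y^k]=(\mathbb{E}Y)^k+O(n^{2k-1})$. Your computations check out ($\rho=1/(\phi^2+1)=(5-\sqrt5)/10$ agrees with the exact marginal $F_iF_{n-i+1}/F_{n+2}$, and the leading coefficient $1/10$ of $\mathbb{E}[Y]$ matches Theorem \ref{th:main_average_Y} at $d=1$), and the remaining transfer-matrix estimates you defer are indeed routine Fibonacci/Binet bookkeeping. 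What your approach buys is conceptual economy and an explanation of the answer: concentration forces the $k$-th moment to be $(\mathbb{E}Y/n^2)^k n^{2k}$ to leading order, and only two moments of $Y$ need to be controlled rather than all mixed moments $G^+_{1,0,a,b}$; it also yields $Y/\mathbb{E}Y\to 1$ in probability as a byproduct. What the paper's approach buys is finer information -- the exact leading asymptotics of every $G^+_{1,0,a,b}(n)$ -- which is the kind of data one would want for the asymptotic-normality question raised in Section 7, where your second-moment argument says nothing about the fluctuations at the relevant scale $n^{3/2}$.
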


\medskip
We also derive explicit formulas for the expectations of $X_{n,dn+1}$  and $X_{n,dn-1}$. 
\begin{thm} \label{th:main_average_Y}
Let $d$ and $n$ be two given positive integers. The expectation of $X_{n,dn+1}$ equals
 \begin{align*}
\mathbb{E}[ X_{n,dn+1} ]&=\,  
 \frac{d(d+1)(5d+1)  (n-1)^2}{24(4d+1)}
  +
 \frac{d(d+1)(32d^2+63d+7)  (n-1)}{24(4d+1)^2}
  \\&+
 \frac{d(d+1)(6d^2+27d+3)  }{12(4d+1)^2}
  -\frac{M_d(n-1)}{M_d(n)}
  \\&
  \cdot \Bigl(\frac{d(d+1)(d-1)  (n-1)^2}{24(4d+1)}
 +  \frac{d(d+1)(14d^2+21d+1)  (n-1)}{24(4d+1)^2}
 +
 \frac{d(d+1)(6d^2+27d+3)  }{12(4d+1)^2} \Bigr)
 .
\end{align*}
\end{thm}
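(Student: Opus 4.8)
The plan is to reduce the claim to a single power sum. By definition,
\[
\mathbb{E}[X_{n,dn+1}]=\frac{1}{M_d(n)}\sum_{\la\in\mathcal{C}_{n,dn+1}}|\la|,
\]
since $|\mathcal{C}_{n,dn+1}|=M_d(n)$; thus it suffices to evaluate $S(n):=\sum_{\la\in\mathcal{C}_{n,dn+1}}|\la|$, the $k=1$ instance of Theorem~\ref{th:main1}, in closed form. The first step is to recall the beta-set (abacus) description of the partitions in $\mathcal{C}_{n,dn+1}$ and the combinatorial parametrization behind the recurrence \eqref{eq:Mds}: such a partition is encoded by a beta-set that can be built up one position at a time, each step being either a single ``empty'' move (which lowers the parameter $n$ by $1$) or one of $d$ distinguishable ``occupied'' moves (which lowers $n$ by $2$ and, on the abacus, inserts a bead while shifting the others). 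This is exactly the structure producing $M_d(n)=M_d(n-1)+dM_d(n-2)$, and under it the size $|\la|$ becomes an explicit statistic on the combinatorial object, whose increment under appending a move depends quadratically on the position of that move and linearly on the beads already placed.

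With this in hand I would set up and solve a coupled recursion in $n$. Splitting each object of parameter $n$ according to its last move and tracking how $|\la|$ changes, one obtains linear recurrences with polynomial-in-$n$ (and rational-in-$d$) coefficients for $S(n)$ together with the auxiliary sums needed to close the system (the plain count $M_d(n)$ and the sum of a first-moment position statistic over $\mathcal{C}_{n,dn+1}$); equivalently one passes to generating functions in $n$, where $\sum_n M_d(n)x^n=1/(1-x-dx^2)$ and $\sum_n S(n)x^n$ comes out as a rational function with denominator $(1-x-dx^2)^3$. The $k=1$ case of Theorem~\ref{th:main1}, after the change of basis $M_d(n+1)=M_d(n)+dM_d(n-1)$, guarantees $S(n)=A(n,d)M_d(n)+B(n,d)M_d(n-1)$ with $A,B$ polynomials in $n$ of degree at most $2$ and coefficients rational in $d$; plugging this ansatz into the recursion (or reading off the partial-fraction expansion) pins $A$ and $B$ down uniquely, the discriminant $1+4d$ of $1-x-dx^2$ accounting for the denominators $24(4d+1)$ and $24(4d+1)^2$. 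Dividing by $M_d(n)$ yields $\mathbb{E}[X_{n,dn+1}]=A(n,d)+B(n,d)\,M_d(n-1)/M_d(n)$, and rewriting $A,B$ in the variable $n-1$ produces exactly the displayed formula.

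The main obstacle is the bookkeeping in the first step: computing precisely how $|\la|$ transforms under the abacus ``append a move'' operation, keeping track of the $d$ colors, of the shift imposed on every already-placed bead, and of the quadratic contribution of the new position — an error in any of these constants propagates directly into the final polynomial coefficients. The remaining difficulty is purely computational, namely extracting the degree-$\le 2$ polynomials $A,B$ (e.g.\ from enough values $S(n)$ with $d$ kept symbolic, or from the $n$-generating function) and simplifying the resulting rational functions of $d$ into the stated form. Several independent checks are available: specializing to $d=1$ must reproduce the $k=1$ case of Theorem~\ref{th:main3} (equivalently the Amdeberhan-type average for $(n,n+1)$-core partitions with distinct parts), and via \eqref{eq:1.1}--\eqref{eq:1.2} the answer must be compatible with Zaleski's conjectured average size of $(n,dn-1)$-core partitions with distinct parts.
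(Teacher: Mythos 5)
Your proposal follows essentially the same route as the paper: reduce to the power sum via the beta-set encoding, decompose by the ``last move'' to get the Fibonacci-type recurrence with polynomial corrections, pass to generating functions with denominator a power of $1-q-dq^2$, and extract $A(n,d)M_d(n)+B(n,d)M_d(n-1)$ by partial fractions. The only small understatement is that closing the system requires not just the count and the first-moment position statistic but also the first and second moments of the beta-set cardinality (the paper's $G^+_{d,n,0,1}$ and $G^+_{d,n,0,2}$, arising from the $-\binom{|\beta(\la)|}{2}$ term in the size formula), which your framework accommodates without difficulty.
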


\medskip

\begin{ex}
Let $d=2$ and $n=4$. Then $M_d(n-1)=M_2(3)=5$ and $M_d(n)=M_2(4)=11$.  By the above theorem the expectation of $X_{n,dn+1}$ should be $54/11$. We can check that this is true since the number of $(4,9)$-core partitions with distinct parts equals $11$, and the sum of their sizes equals $54$:
$$
\mathcal{C}_{4,9}=
\{ \emptyset, (1), (2), (3), (2,1), (4,1), (5,2),(6,3),(3,2,1),(5,2,1),(4,3,2,1) \}.
$$
\end{ex}

\medskip
\begin{ex}\label{ex:1.9}
Let $d=3$ and $n=3$. Then $M_d(n-1)=M_3(2)=4$ and $M_d(n)=M_3(3)=7$.  By the above theorem the expectation of $X_{n,dn+1}$ should be $34/7$. We can check that this is true since the number of $(3,10)$-core partitions with distinct parts equals $7$, and the sum of their sizes equals $34$:
$$
\mathcal{C}_{3,10}=
\{ \emptyset, (1), (2),  (3,1), (4,2), (5,3,1),(6,4,2) \}.
$$
\end{ex}

\medskip

\begin{thm} \label{th:main_average_X}
Let $d\geq 1$ and $n\geq 2$ be two given positive integers.
The total sum of sizes of partitions in $\mathcal{C}_{n,dn-1}$ is 
 \begin{align*}
\sum_{\la\in \mathcal{C}_{n,dn-1}}|\la| & = M_d(n)\cdot
 	\Bigl( {\frac { ({d}^{2}-1)(5{d}^{2}+d-1 ) {n}^{2}}{24d(4\,d+
1)}}
-
 {\frac { (d+1)(8d^4+27d^3+2d^2-1 ) {n}}{24d(4\,d+
1)^2}}
 +
 {\frac { d^2-1   }{12d}}
 	\Bigr)
 \\&+  M_d(n-1)  \cdot
 	\Bigl(
 {\frac { ({d}+1)(-d^3+7{d}^{2}+d-1 ) {n}^{2}}{24d(4\,d+
1)}}
 -
 {\frac { (d+1)(6d^4-19d^3-7d^2+d+1 ) {n}}{24d(4\,d+
1)^2}}
\\&\ \ \ \ \ \  -
 {\frac { (d+1)(d^4+20d^3-6d^2-8d-1 )   }{12d(4\,d+
1)^2}}
 \Bigr)
 .
\end{align*}
\end{thm}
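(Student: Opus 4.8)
The plan is to reduce the statement about $\sum_{\la\in\mathcal{C}_{n,dn-1}}|\la|$ to the companion statement about $\sum_{\la\in\mathcal{C}_{n,dn+1}}|\la|$, which is exactly the expectation formula of Theorem \ref{th:main_average_Y} multiplied by $M_d(n)$. Since Theorem \ref{th:main_average_Y} gives $\mathbb{E}[X_{n,dn+1}]$, the quantity $\sum_{\la\in\mathcal{C}_{n,dn+1}}|\la|=M_d(n)\,\mathbb{E}[X_{n,dn+1}]$ is known in the form $A(n,d)M_d(n)+B(n,d)M_d(n-1)$ with the explicit $A,B$ read off from that theorem. So the real work is to express the $(n,dn-1)$-core power sum in terms of the $(n,dn+1)$-core power sum (plus lower-order pieces), which I expect to come from a bijective or near-bijective comparison of beta-sets.

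First I would recall the beta-set / abacus description of $(n,dn\pm1)$-core partitions with distinct parts used earlier in the paper: a partition with distinct parts corresponds to a finite subset of $\mathbb{Z}_{\ge0}$ (its beta-set, i.e. the parts themselves), and the $n$-core and $(dn\pm1)$-core conditions translate into a combinatorial condition on which residues mod $n$ can occur together with a boundedness condition coming from the second modulus. Concretely, I would set up the standard parametrization in which a core partition in $\mathcal{C}_{n,dn\pm1}$ is encoded by a lattice-path / subset datum (the "$G^+_{d,\dots}$" objects hinted at in the commented-out material), so that $|\la|$ becomes an explicit quadratic form in the path coordinates. Then $\sum_{\la}|\la|$ is a sum of that quadratic form over the parametrizing set, and the Straub/Nath--Sellers recurrences \eqref{eq:Nds}, \eqref{eq:Mds} govern the cardinalities of these sets.

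Next I would exploit the duality between the $dn-1$ and $dn+1$ cases. The key step is a bijection between $\mathcal{C}_{n,dn-1}$ and $\mathcal{C}_{n,dn+1}$ (or between their natural index sets) under which the size transforms in a controlled way: if $\la\mapsto\la'$ then $|\la|=|\la'|+(\text{an explicit linear-in-coordinates correction})$, or alternatively a bijection with the largest-size complement $\la\mapsto\bar\la$ with $|\la|+|\bar\la|=(\text{largest size})$, using the first author's largest-size formula from \cite{Xiong2}. Summing this identity over all partitions converts $\sum_{\mathcal{C}_{n,dn-1}}|\la|$ into $\sum_{\mathcal{C}_{n,dn+1}}|\la|$ plus a sum of the correction term, which is a first moment of a \emph{linear} statistic and hence itself of the form $A'(n,d)M_d(n)+B'(n,d)M_d(n-1)$ by the same summation-over-paths technique (this is where the recurrences and a generating-function or telescoping argument produce the closed form). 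Combining, I get $\sum_{\mathcal{C}_{n,dn-1}}|\la|$ in the claimed shape, and a final routine simplification — collecting the $M_d(n)$ and $M_d(n-1)$ coefficients over the common denominators $24d(4d+1)$ and $24d(4d+1)^2$ — yields the stated polynomials.

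The main obstacle is bookkeeping the correction term precisely: identifying the exact bijection between the two families (the $\pm1$ shift in the modulus perturbs which residues are "allowed" near the top of the beta-set, so the bijection is not the identity on index sets and introduces boundary corrections that must be tracked coordinate by coordinate), and then evaluating the resulting weighted sums over the Fibonacci-type index set in closed form. I expect the cleanest route is to compute both $\sum_{\mathcal{C}_{n,dn+1}}|\la|$ and $\sum_{\mathcal{C}_{n,dn-1}}|\la|$ directly from the path model by the same method — write $|\la|$ as a sum of contributions indexed by the "steps" of the path, interchange summation so that each step's contribution is weighted by the number of completions (a product of two $M_d$-values by the recurrence), and then resum — rather than relying on an exact bijection; the $(n,dn+1)$ computation is forced to agree with Theorem \ref{th:main_average_Y} as a consistency check, and the $(n,dn-1)$ computation differs only in the boundary step, which is where the asymmetry between the two coefficient polynomials originates. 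Verifying the small cases ($n=2,3$ against direct enumeration, as in Examples \ref{ex:1.9}) pins down any sign or normalization ambiguity.
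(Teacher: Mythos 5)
Your plan is essentially the paper's proof: the paper takes $k=1$ in Lemma \ref{th:size_dn-1}, writes each $G^-_{d,n,a,b}(n-1)$ as $G^+_{d,n,a,b}(n-1)$ minus the contribution of the nice subsets containing the boundary cell (which re-indexes to $G^+$-values at $n-3$ plus $M_d(n-2)$ terms), and then substitutes the closed forms \eqref{eq:dm10}, \eqref{eq:dm01}, \eqref{eq:dm02} obtained from the recurrence/generating-function machinery, exactly the ``same method with a boundary correction'' route you describe. The one slip is calling the correction ``a first moment of a linear statistic'' --- it still involves the quadratic statistic $|I|^2$ --- but it is evaluated by the same $G^+$ functions, so this does not affect the argument.
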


\begin{ex}
Let $d=1$ and $n=4$. Then $M_d(n-1)=M_1(3)=3$ and $M_d(n)=M_1(4)=5$.  By the above theorem the total sum of sizes of $(4,3)$-core partitions with distinct parts should be $3$. We can check that this is true since the number of such partitions  equals $N_d(n)=N_1(4)=3$, and the sum of their sizes equals $3$:
$$
\mathcal{C}_{4,3}=
\{ \emptyset, (1), (2) \}.
$$
\end{ex}

\medskip

\begin{ex}
Let $d=2$ and $n=5$. Then $M_d(n-1)=M_2(4)=11$ and $M_d(n)=M_2(5)=21$.  By the above theorem the total sum of sizes of $(5,9)$-core partitions with distinct parts should be $92$. We can check that this is true since the number of such partitions  equals $N_d(n)=N_2(5)=16$, and the sum of their sizes equals $92$:
\begin{align*}
\mathcal{C}_{5,9}=
\{ \emptyset, & (1), (2), (3),(4), (2,1), (3,1), (3,2), (5,1),(6,2),(7,3),\\& (4,2,1),(6,2,1),(4,3,1),(5,3,2), (5,4,2,1) \}.
\end{align*}
\end{ex}

By \eqref{eq:1.1} and \eqref{eq:1.2} we obtain,  
Theorem \ref{th:main_average_X} implies the following conjecture of Zaleski \cite{Za2} directly.
\begin{cor}[Conjecture 3.8 of Zaleski \cite{Za2}] \label{th:main_average}
Let $d\geq 1$ and $n\geq 2$ be two given positive integers.
When $d\neq 2$, the expectation of $X_{n,dn-1}$ equals
 \begin{align*}
\mathbb{E}[ X_{n,dn-1} ]&=\,  {\frac { \left( 5\,{d}^{3}+7\,{d}^{2}+d-1 \right) {n}^{2}}{24(4\,d+
1)}}-{\frac { \left( 8\,{d}^{5}+21\,{d}^{4}+7\,{d}^{3}-{d}^{2}+3
\,d-2 \right) n}{24(16\,{d}^{3}-24\,{d}^{2}-15\,d-2)}}
\\
&+{\frac {17\,{
d}^{4}+13\,{d}^{3}-9\,{d}^{2}-7\,d-2}{12(16\,{d}^{3}-24\,{d}^{2}-15\,d-2)}
}+\frac{N_d(n+1)}{N_d(n)}\,
\\
&\cdot \left( -{\frac { \left( {d}^{2}-1 \right) {n}^{
2}}{24(4\,d+1)}}-{\frac { \left( 2\,{d}^{4}-9\,{d}^{3}-16\,{d}^{2}-3
\,d+2 \right) n}{8(16\,{d}^{3}-24\,{d}^{2}-15\,d-2)}}-{\frac {{d}^{
4}+20\,{d}^{3}+9\,{d}^{2}-20\,d-10}{ 12\left( d-2 \right)  \left( 4\,d+1
 \right) ^{2}}} \right).
\end{align*}
\end{cor}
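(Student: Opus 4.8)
The plan is to deduce the formula purely algebraically from Theorem~\ref{th:main_average_X}, using that $|\mathcal{C}_{n,dn-1}|=N_d(n)$ (Straub's theorem) together with the identities \eqref{eq:1.1}--\eqref{eq:1.2}, which are valid precisely because $d\neq2$.

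First, since the number of partitions in $\mathcal{C}_{n,dn-1}$ is $N_d(n)$,
\[
\mathbb{E}[X_{n,dn-1}]=\frac{1}{N_d(n)}\sum_{\la\in\mathcal{C}_{n,dn-1}}|\la| .
\]
Write the right-hand side of Theorem~\ref{th:main_average_X} in the compact form $M_d(n)\,P(n,d)+M_d(n-1)\,Q(n,d)$, where $P(n,d)$ and $Q(n,d)$ are the explicit quadratics in $n$ (with coefficients rational in $d$) displayed there. Since $d\neq2$, the identities \eqref{eq:1.1} and \eqref{eq:1.2} give
\[
M_d(n)=\frac{d(d-1)N_d(n)-N_d(n+1)}{d(d-2)},\qquad
M_d(n-1)=\frac{(d-1)N_d(n+1)-dN_d(n)}{d(d-2)} .
\]
Substituting these, collecting the coefficients of $N_d(n)$ and of $N_d(n+1)$, and dividing by $N_d(n)$ yields
\[
\mathbb{E}[X_{n,dn-1}]
=\frac{(d-1)P-Q}{d-2}
+\frac{N_d(n+1)}{N_d(n)}\cdot\frac{-P+(d-1)Q}{d(d-2)} .
\]
It then remains to check that $\dfrac{(d-1)P-Q}{d-2}$ equals the first three terms of the stated formula and that $\dfrac{-P+(d-1)Q}{d(d-2)}$ equals the parenthesised factor.

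This last step is an elementary, if lengthy, computation: one places the coefficient of each of $n^{2}$, $n^{1}$, $n^{0}$ over the common denominator $24\,d(d-2)(4d+1)^{2}$, expands the numerators, and simplifies. Two bookkeeping points make the result take the stated shape: the denominators $16d^{3}-24d^{2}-15d-2$ appearing in the statement are just $(d-2)(4d+1)^{2}$ in disguise, and in each term the superfluous factor $d$ in the denominator cancels against the numerator. For instance, the $n^{2}$-numerator of $(d-1)P-Q$ (over $24d(4d+1)$) is $(d-1)(d^{2}-1)(5d^{2}+d-1)-(d+1)(-d^{3}+7d^{2}+d-1)=d(d-2)(5d^{3}+7d^{2}+d-1)$, which after dividing by $d-2$ produces exactly $\dfrac{(5d^{3}+7d^{2}+d-1)\,n^{2}}{24(4d+1)}$; likewise the $n^{2}$-coefficient of $-P+(d-1)Q$ reduces to $-\dfrac{d^{2}(d-2)(d^{2}-1)}{24d(4d+1)}$, giving $-\dfrac{(d^{2}-1)\,n^{2}}{24(4d+1)}$ after dividing by $d(d-2)$. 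The remaining coefficients are handled identically. The main---and essentially only---obstacle is the bulk of these polynomial manipulations: they are routine but error-prone, so they are best confirmed with a computer algebra system and cross-checked against values of $\mathbb{E}[X_{n,dn-1}]$ computed directly by enumerating small $\mathcal{C}_{n,dn-1}$; no conceptual difficulty remains once Theorem~\ref{th:main_average_X} and \eqref{eq:1.1}--\eqref{eq:1.2} are in hand.
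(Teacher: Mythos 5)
Your proposal is correct and follows essentially the same route as the paper, which derives the corollary from Theorem~\ref{th:main_average_X} by substituting the identities \eqref{eq:1.1} and \eqref{eq:1.2} (valid for $d\neq 2$) and dividing by $N_d(n)=|\mathcal{C}_{n,dn-1}|$. Your explicit bookkeeping (e.g.\ that $16d^3-24d^2-15d-2=(d-2)(4d+1)^2$ and the sample $n^2$-coefficient checks) is consistent with the stated formula.
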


\medskip
The rest of  the paper is arranged as follows. In Section \ref{sec:2} we review some basic results on core partitions.  The characterizations for the $\beta$-sets of $(n, dn-1)$ and $(n, dn+1)$-core partitions with distinct parts are given in Section \ref{sec:3}. Then in Section \ref{sec:4} we use these characterizations to translate the problems to study two families of functions $G^+_{d,m,a,b}(n)$ and $G^-_{d,m,a,b}(n)$, therefore  prove the main results. The explicit formulas for expectations of $X_{n,dn+1}$ and $X_{n,dn-1}$ are derived in Section \ref{sec:5}. The asymptotic formulas for moments of $X_{n,n+1}$ are given  in Section \ref{sec:6}.

\section{Simultaneous core partitions and their $\beta$-sets}\label{sec:2}
A \emph{partition} is a finite weakly decreasing sequence of positive integers
$\lambda = (\lambda_1, \lambda_2, \ldots, \lambda_\ell)$. The numbers
 $\la_i\ (1\leq i\leq \ell)$ are called the \emph{parts} and $\sum_{1\leq i\leq \ell}\lambda_i$  the
\emph{size} of the partition $\lambda$ (see \cite{Macdonald,ec2}). Each partition $\lambda$ is identified with
 its \emph{Young
diagram}, which is an array of boxes arranged in left-justified
rows with $\lambda_i$ boxes in the $i$-th row. For the $(i,j)$-box in
the $i$-th row and $j$-th column in the Young diagram, its \emph{hook length} $h(i, j)$ is defined to be
the number of boxes exactly to the right, and exactly below, and the
box itself.  Recall that a partition $\lambda$ is
called a \emph{$(t_1,t_2,\ldots, t_m)$-core partition} if none of its hook lengths is divisible by  $t_1,t_2,\ldots,t_{m-1}$, or $t_m$  (see \cite{tamd, KN}). For example, Figure \ref{fig:1} gives the Young diagram and hook
lengths of the partition $(6,3,3,2)$. Therefore, it is a $(7, 10)$-core partition since none of its
hook lengths is divisible by $7$ or $10$.

\begin{figure}[h!]
\begin{center}
\Yvcentermath1

\begin{tabular}{c}
$\young(986321,542,431,21)$

\end{tabular}

\end{center}
\caption{The Young diagram and hook lengths of the partition
$(6,3,3,2)$.} \label{fig:1}
\end{figure}

The \emph{$\beta$-set} of a partition $\lambda = (\lambda_1, \lambda_2, \ldots, \lambda_\ell)$ is denoted by
$$\beta(\lambda)=\{\lambda_i+\ell-i : 1 \leq i \leq \ell\}.$$
In fact, $\beta(\lambda)$
 is equal to the set of
hook lengths of boxes in the first column of the corresponding Young
diagram of $\la$ (see \cite{ols,Xiong1}). For example, from Figure \ref{fig:1} we know that  $\beta((6,3,3,2))=\{  9,5,4,2 \}$.
It is easy to see that a partition~$\lambda$ is uniquely
determined by its $\beta$-set $\beta(\lambda)$.  The following results on $\beta$-sets are well known.

\begin{lem}[\cite{ols,Xiong2,Xiong,Xiong1}]\label{th:2.1}

 The size of a partition
$\lambda$ is  determined by its $\beta$-set as the following:
\begin{align}\label{eq:2.1}
 |  \lambda |=\sum_{x\in
\beta(\lambda)}{x}-\binom{|\beta(\lambda)| }{ 2}.
\end{align}
\end{lem}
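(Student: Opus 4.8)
The plan is to start from the description of $\beta(\lambda)$ as the set of first-column hook lengths of the Young diagram of $\lambda$, and to compute $|\lambda|$ directly from the multiset $\{\lambda_i+\ell-i:1\le i\le\ell\}$. First I would observe that the map $i\mapsto \lambda_i+\ell-i$ is strictly decreasing (since $\lambda$ is weakly decreasing, $\lambda_i+\ell-i>\lambda_{i+1}+\ell-(i+1)$), so $\beta(\lambda)$ genuinely has $\ell$ distinct elements and we may write $\beta(\lambda)=\{b_1>b_2>\cdots>b_\ell\}$ with $b_i=\lambda_i+\ell-i$. Summing over $i$ gives
\[
\sum_{x\in\beta(\lambda)}x=\sum_{i=1}^{\ell}\bigl(\lambda_i+\ell-i\bigr)=|\lambda|+\sum_{i=1}^{\ell}(\ell-i)=|\lambda|+\binom{\ell}{2}.
\]
Since $|\beta(\lambda)|=\ell$, rearranging yields exactly \eqref{eq:2.1}.

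To make this fully self-contained I would also include the (standard) justification that $\beta(\lambda)$ really is the set of first-column hook lengths: the box $(i,1)$ has arm length $\lambda_i-1$ and leg length $\ell-i$, so its hook length is $(\lambda_i-1)+(\ell-i)+1=\lambda_i+\ell-i$, matching the definition of $\beta(\lambda)$. This also re-confirms the running example $\beta((6,3,3,2))=\{9,5,4,2\}$. Alternatively, one can bypass the hook-length interpretation entirely and treat the displayed formula as the definition-level identity above, which is purely a bookkeeping computation.

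There is essentially no obstacle here; the only point requiring a line of care is the strict monotonicity of $i\mapsto\lambda_i+\ell-i$, which guarantees that the indexing set has size exactly $\ell=|\beta(\lambda)|$ so that the binomial coefficient $\binom{|\beta(\lambda)|}{2}$ in \eqref{eq:2.1} coincides with $\binom{\ell}{2}=\sum_{i=1}^{\ell}(\ell-i)$ appearing in the computation. Everything else is the one-line summation above, so I would present the proof in just a few lines.
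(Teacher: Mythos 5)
Your proof is correct: the identity $\sum_{i=1}^{\ell}(\lambda_i+\ell-i)=|\lambda|+\binom{\ell}{2}$ together with the observation that $i\mapsto\lambda_i+\ell-i$ is strictly decreasing (so $|\beta(\lambda)|=\ell$) is exactly the standard argument. The paper itself states this lemma without proof, citing the literature, and your computation matches the proof given in those references.
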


\medskip

\begin{lem}[\cite{Xiong2,Xiong}] \label{th:2.2}
The partition  $\lambda$ is a partition with distinct parts if and only if there does not exist $x,y\in \beta(\lambda)$ with $x-y=1$.
\end{lem}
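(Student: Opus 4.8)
The plan is to translate the combinatorial condition ``$\lambda$ has distinct parts'' into an arithmetic condition on the $\beta$-set, using the definition $\beta(\lambda)=\{\lambda_i+\ell-i:1\le i\le\ell\}$ directly. First I would observe that, writing $b_i=\lambda_i+\ell-i$ for $1\le i\le\ell$, since $\lambda_1\ge\lambda_2\ge\cdots\ge\lambda_\ell$ are the parts in weakly decreasing order we have $b_1>b_2>\cdots>b_\ell$, so the elements of $\beta(\lambda)$ are exactly $b_1,\dots,b_\ell$ listed in strictly decreasing order, and consecutive ones satisfy $b_i-b_{i+1}=(\lambda_i-\lambda_{i+1})+1$. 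The key point is that $b_i-b_{i+1}=1$ holds if and only if $\lambda_i=\lambda_{i+1}$, i.e. if and only if two consecutive parts coincide.

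From this the two implications follow. For the forward direction: if $\lambda$ has distinct parts, then $\lambda_i>\lambda_{i+1}$ for every $i$, hence $b_i-b_{i+1}=(\lambda_i-\lambda_{i+1})+1\ge 2$ for all $i<\ell$; since the $b_i$ are strictly decreasing, any two elements $x=b_i>y=b_j$ with $i<j$ satisfy $x-y\ge b_i-b_{i+1}\ge 2$, so no two elements of $\beta(\lambda)$ differ by $1$. For the converse: if $\lambda$ does \emph{not} have distinct parts, then $\lambda_i=\lambda_{i+1}$ for some $i<\ell$, and then $x=b_i$ and $y=b_{i+1}$ are both in $\beta(\lambda)$ with $x-y=1$; contrapositively, if no two elements of $\beta(\lambda)$ differ by $1$ then $\lambda$ has distinct parts.

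There is no real obstacle here; the only point requiring a little care is the bookkeeping about the \emph{ordering} of the $\beta$-set, namely that when the parts are weakly decreasing the values $\lambda_i+\ell-i$ are automatically strictly decreasing, so that ``consecutive elements of $\beta(\lambda)$'' correspond exactly to ``consecutive rows $i,i+1$ of $\lambda$''. Once that is in place, the statement that a gap of $1$ in $\beta(\lambda)$ occurs precisely at a repeated part is immediate, and both directions of the equivalence drop out. (One should also note the trivial edge cases $\ell=0$ and $\ell=1$, where $\lambda$ has distinct parts vacuously and $\beta(\lambda)$ has at most one element, so no pair $x-y=1$ can occur.)
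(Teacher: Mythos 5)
Your argument is correct and complete: the paper itself states Lemma \ref{th:2.2} without proof, citing it as a well-known fact from \cite{Xiong2,Xiong}, and your direct computation (that $b_i-b_{i+1}=(\lambda_i-\lambda_{i+1})+1$, so a gap of $1$ in the strictly decreasing sequence $b_1>\cdots>b_\ell$ occurs exactly at a repeated part) is precisely the standard verification given in those references. Nothing further is needed.
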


\medskip
\begin{lem}[\cite{and, berge,ols,Xiong,Xiong1}]\label{th:2.3}
 {\em (The abacus condition for t-core partitions.)}
A partition $\lambda$ is a $t$-core partition if and only if for any
$x\in \beta(\lambda)$ with $x\geq t$,
we always have $x-t \in \beta(\lambda)$.
\end{lem}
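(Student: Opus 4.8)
The plan is to reduce the lemma to a single classical fact about $\beta$-sets and then finish with a short telescoping argument. Write $\beta(\lambda)=\{b_1>b_2>\cdots>b_\ell\}$, so that $b_i=\lambda_i+\ell-i$, and think of $\beta(\lambda)$ as a set of occupied positions (``beads'') on $\{0,1,2,\dots\}$, the complement being the empty positions (``gaps''). The fact I would first record --- either citing it from \cite{ols,Xiong1} or deriving it directly from the definition of hook length --- is that the hook lengths occurring in row $i$ of the Young diagram of $\lambda$ are precisely the numbers $b_i-y$ with $0\le y<b_i$ and $y\notin\beta(\lambda)$. One checks this by noting that among the positions $0,1,\dots,b_i-1$ there are exactly $\ell-i$ beads, namely $b_{i+1},\dots,b_\ell$, hence exactly $\lambda_i$ gaps, matching the number of boxes in row $i$; a routine inspection of the hook-length definition then matches the $j$-th box of row $i$ with the $j$-th smallest such gap. (This is already visible in Figure~\ref{fig:1}: there $\beta((6,3,3,2))=\{9,5,4,2\}$, and e.g.\ the row with first-column hook $9$ has hooks $9-y$ for $y\in\{0,1,3,6,7,8\}$, i.e.\ $9,8,6,3,2,1$.) Summing over rows, an integer $h\ge 1$ is a hook length of $\lambda$ if and only if there is some $x\in\beta(\lambda)$ with $x\ge h$ and $x-h\notin\beta(\lambda)$.

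Granting this, both directions are immediate. If $\lambda$ is a $t$-core and there were some $x\in\beta(\lambda)$ with $x\ge t$ and $x-t\notin\beta(\lambda)$, then $h=t$ would be a hook length of $\lambda$, which is absurd; this gives the forward implication. Conversely, assume $x-t\in\beta(\lambda)$ whenever $x\in\beta(\lambda)$ and $x\ge t$, and suppose for contradiction that $\lambda$ has a hook length $mt$ for some $m\ge 1$. Then there is $x\in\beta(\lambda)$ with $x\ge mt$ and $x-mt\notin\beta(\lambda)$; the finite chain $x,\,x-t,\,x-2t,\,\dots,\,x-mt$ begins in $\beta(\lambda)$ and ends outside it, so for some $j$ with $0\le j\le m-1$ we have $z:=x-jt\in\beta(\lambda)$ but $z-t=x-(j+1)t\notin\beta(\lambda)$. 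Since $z-t\ge x-mt\ge 0$ we have $z\ge t$, so the hypothesis forces $z-t\in\beta(\lambda)$ --- a contradiction. Hence no hook length of $\lambda$ is divisible by $t$, and $\lambda$ is a $t$-core.

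The only step needing genuine work is the hook-length/$\beta$-set dictionary of the first paragraph; it is entirely standard, so in practice I would quote it rather than reprove it. Once it is available, the lemma amounts to the elementary equivalence between ``every occupied position has its down-shift by $t$ occupied'' and ``no descending chain $x, x-t, x-2t, \dots$ starting at a point of $\beta(\lambda)$ ever leaves $\beta(\lambda)$,'' which is precisely the telescoping argument given above; I therefore expect the full proof to be very short.
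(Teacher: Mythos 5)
Your proof is correct. The paper itself does not prove Lemma~\ref{th:2.3} at all --- it is stated as a well-known fact with citations to \cite{and, berge, ols, Xiong, Xiong1} --- and your argument (the classical dictionary identifying the hook lengths in row $i$ with $b_i-y$ for gaps $y<b_i$, followed by the telescoping chain $x, x-t,\dots,x-mt$ to handle all multiples of $t$ at once) is exactly the standard proof found in those references, so there is nothing to reconcile.
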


\section{The $\beta$-sets of $(n, dn\pm 1)$-core partitions with distinct parts} \label{sec:3}

In this section we focus on $(n, dn-1)$ and $(n, dn+1)$-core partitions with distinct parts.
The following characterizations for  $\beta$-sets are well known. We give a short proof here for completeness.

\begin{thm}[\cite{NS2,Straub,Xiong2,Za2}] \label{th: beta_dn+1}
Let $n$ and $d$  be two positive integers. Then a finite subset $S$ of $\mathbb{N}$ is a $\beta$-set of some
$(n, dn+1)$-core partition with distinct parts iff the following conditions hold:

(i) $S\subseteq \{ (i-1)n+j: 1 \leq i\leq d,\ 1\leq j \leq n-1  \}$;

(ii)  If $in+j\in S$ with $1 \leq i\leq d-1,\ 1\leq j \leq n-1$, then $(i-1)n+j\in S$;

(iii)  If $j\in S$ with $1\leq j\leq n-2$, then $j+1\notin S$.
\end{thm}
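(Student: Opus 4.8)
The plan is to prove the characterization by translating the three core-partition conditions (being a $(n,dn+1)$-core, having distinct parts, being a partition at all) into the language of $\beta$-sets, using Lemmas \ref{th:2.2} and \ref{th:2.3}, and then cleaning up the resulting conditions to the stated normalized form. The one subtlety is that a partition does not have a unique $\beta$-set — adding $0$ to a $\beta$-set and shifting produces another $\beta$-set of the same partition — so I must fix a normalization. The natural choice is to work with $\beta$-sets $S$ with $0 \notin S$ (equivalently, $\beta$-sets in which the number of elements is exactly the number of parts, i.e. the ``minimal'' $\beta$-set); I will note at the outset that every partition has a unique such $\beta$-set, and that condition (i) forces $0 \notin S$ anyway since all listed elements $(i-1)n+j$ are positive.

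First I would record the abacus description. By Lemma \ref{th:2.3}, $\lambda$ is an $n$-core iff $S$ is closed under subtracting $n$ (whenever $x \in S$, $x \geq n$, then $x - n \in S$), and $\lambda$ is a $(dn+1)$-core iff $S$ is closed under subtracting $dn+1$. The $n$-core condition says precisely that if we write each element of $S$ as $(i-1)n + j$ with $1 \leq j \leq n$ (using the residue $j = n$ for multiples of $n$), then each ``column'' $j$ of occupied positions is a down-closed initial segment $\{j, n+j, 2n+j, \dots\}$ of bounded length. Next I would show $n \notin S$: if $n \in S$, then by $n$-core closure $0 \in S$, contradicting our normalization; more importantly I will show no element is $\equiv 0 \pmod n$, because the smallest such element would force $0 \in S$. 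So every element of $S$ has the form $(i-1)n+j$ with $i \geq 1$ and $1 \leq j \leq n-1$.

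Then I would bound $i \leq d$. Suppose $x = (i-1)n + j \in S$ with $i \geq d+1$ and $1 \leq j \leq n-1$. Then $x \geq dn + 1$, so by the $(dn+1)$-core closure $x - (dn+1) = (i-1-d)n + (j-1) \in S$. If $j \geq 2$ this is again an element with the same shape but smaller ``$i$''; if $j = 1$ then $x - (dn+1) = (i-1-d)n$ is a multiple of $n$, which we just excluded (or is $0$, excluded). Iterating the $(dn+1)$-subtraction — combined, when needed, with $n$-core closure to keep things in range — drives $i$ down; a short induction shows the top row index never exceeds $d$, giving condition (i). Conditions (ii) and (iii) then fall out: (ii) is exactly the $n$-core down-closure within columns $1 \leq j \leq n-1$ restricted to the window $i \leq d$; (iii) is Lemma \ref{th:2.2}, the distinct-parts condition, which forbids consecutive integers in $S$ — I must check that within the window described by (i)--(ii) the \emph{only} way to get consecutive integers $x, x+1$ is $x = j$, $x+1 = j+1$ with both in row $i=1$ and $1 \leq j \leq n-2$ (any other near-consecutive pair would straddle a multiple of $n$, i.e. be of the form $(i-1)n$ and $(i-1)n+1$, but $(i-1)n \notin S$), so the general distinctness constraint collapses to (iii). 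Conversely, given $S$ satisfying (i)--(iii), I would verify directly that $S$ is closed under subtracting $n$ (immediate from (i)--(ii): subtracting $n$ moves $(i-1)n+j$ to $(i-2)n+j$, which (ii) guarantees lies in $S$ when $i \geq 2$, and when $i=1$ there is nothing to check since $(i-1)n+j = j < n$) and under subtracting $dn+1$ (for $x = (i-1)n+j \in S$ with $x \geq dn+1$ we need $i$ large, but (i) caps $i$ at $d$, so $x \leq (d-1)n + (n-1) = dn - 1 < dn+1$ and the condition is vacuous), and that (iii) together with the shape rules out consecutive elements, so by Lemmas \ref{th:2.2} and \ref{th:2.3} the associated partition is an $(n,dn+1)$-core with distinct parts.

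The main obstacle I expect is the bookkeeping in the ``$i \leq d$'' step and in the converse's verification that no consecutive pair sneaks in — these are where the interaction between the two moduli $n$ and $dn+1$ and the ``boundary'' residues $j \in \{1, n-1\}$ has to be handled carefully, and where an off-by-one in the range of $j$ or in the base case of the induction would break the argument. Everything else is a routine unwinding of the abacus lemma and the distinct-parts lemma. Since the paper only wants a short proof for completeness, I would present the two core closures and the distinctness translation crisply and leave the minimal-$\beta$-set normalization as a one-line remark.
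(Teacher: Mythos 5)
Your converse direction and your translation of conditions (ii) and (iii) match the paper's proof, but there is a genuine gap in your argument for condition (i), the bound $i\leq d$. You try to establish it by ``iterating the $(dn+1)$-subtraction'' and claiming an induction drives the row index down to a contradiction — but producing further elements of $S$ with smaller $i$ is not a contradiction, and more fundamentally the bound $i\le d$ is \emph{false} if you only use the two abacus closures (Lemma \ref{th:2.3}) together with the exclusion of multiples of $n$. Concretely, take $n=3$, $d=1$: the partition $(3,1,1)$ is a $(3,4)$-core with $\beta$-set $\{5,2,1\}$, which is closed under subtracting $3$ and under subtracting $4$ and contains no multiple of $3$, yet $5=1\cdot 3+2$ violates (i). The only thing ruling this out is that $(3,1,1)$ does not have distinct parts — and your sketch never invokes the distinct-parts condition in the (i) step, so no amount of bookkeeping will close the argument as you have set it up.

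The paper's proof of (i) uses exactly this missing ingredient: if $x\in S$ with $x\geq dn+2$, then $x-dn\in S$ (apply the $n$-core closure $d$ times) \emph{and} $x-(dn+1)\in S$ (apply the $(dn+1)$-core closure once); these two elements are consecutive integers, contradicting Lemma \ref{th:2.2}. Combined with $dn+1\notin S$ and $xn\notin S$ for $1\le x\le d$ (both of which would force $0\in S$), this gives (i) in one stroke. You compute $x-(dn+1)$ but never pair it with $x-dn$, which is the whole point. (Your preliminary worry about normalizing the $\beta$-set is moot here: the paper defines $\beta(\lambda)=\{\lambda_i+\ell-i\}$, which is a single, well-defined set never containing $0$. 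Also, in your consecutive-pair analysis for the converse, a pair $x,x+1$ can sit in any row $i\ge 2$, not only row $1$; it is condition (ii) that pushes such a pair down to row $1$ where (iii) forbids it.)
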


\begin{proof}
(1) Suppose that $\lambda$ is an $(n, dn+1)$-core partition with distinct parts and $S=\beta(\la)$.
By Lemma \ref{th:2.3} we have $dn+1\notin S$ and $nx\notin S$ for any $1\leq x \leq d$ since $0\notin S$.  For $x\geq dn+2$, if $x\in S,$ by Lemma \ref{th:2.3} we know $x-dn,\, x-(dn+1)\in S$. But by Lemma \ref{th:2.2} this is impossible since $\lambda$ is a partition with distinct parts. Then the condition (i) holds. Also, (ii) and (iii) hold by Lemmas \ref{th:2.2} and \ref{th:2.3}.

(2) On the other hand, suppose that the set $S$ satisfies conditions (i), (ii) and (iii). Let $\la$ be the partition with $\beta(\la)=S$. Since $\beta(\la)$ doesn't have elements larger than $dn-1$,   $\la$ must be a $(dn+1)$-core partition. Also, by (ii) $\la$ must be an $n$-core partition. Finally by (i), (ii), (iii) and Lemma \ref{th:2.2} we know $\la$ is a partition with distinct parts.
\end{proof}

Let
$$
\mathcal{A}_{d,n}:=\{(i,j): 1\leq i\leq d,\ 1\leq j\leq n  \}.
$$
We say that a subset $I\subseteq\mathcal{A}_{d,n}$ is {\em nice} if it satisfies the following two conditions:

(1) $(i+1,j)\in I$ and $i\geq 1$ imply $(i,j)\in I$; 

(2) $(1,j)\in I$ and $1\leq j\leq n-1$ imply $(1,j+1)\notin I$.

Let $\mathcal{B}^+_{d,n}$ be the set of nice subsets of $\mathcal{A}_{d,n}$.
For each $n$-core partition $\la$, define
\begin{align}\label{eq:psi}
\psi_n(\la):=\{(i,j): \ 1\leq j \leq n-1,\, (i-1)n +j \in \beta(\la)
\}.
\end{align}
Then by Theorem \ref{th: beta_dn+1} the map $\psi_n$ gives a bijection  between the sets $\mathcal{C}_{n,dn+1}$ and
 $\mathcal{B}^+_{d,n-1}$.
Furthermore, by Lemma \ref{th:2.1} we have
 \begin{lem}\label{th:size_dn+1}
\begin{align}\label{eq:size_dn+1}
\sum_{\la\in \mathcal{C}_{n,dn+1}}|\la|^k=
\sum_{I\in \mathcal{B}^+_{d,n-1}}\left(\sum_{(i,j)\in I} \left((i-1)n+j\right) - \frac{|I|^2}{2}
+ \frac{|I|}{2}
\right)^k.
\end{align}
 \end{lem}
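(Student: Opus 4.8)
The plan is to derive Lemma \ref{th:size_dn+1} as an immediate consequence of the bijection $\psi_n$ together with the size formula of Lemma \ref{th:2.1}. First I would recall that, by Theorem \ref{th: beta_dn+1} and the discussion preceding the lemma, the map $\psi_n$ restricts to a bijection between $\mathcal{C}_{n,dn+1}$ and $\mathcal{B}^+_{d,n-1}$: indeed, condition (i) of Theorem \ref{th: beta_dn+1} guarantees that every element of $\beta(\la)$ has the form $(i-1)n+j$ with $1\le i\le d$ and $1\le j\le n-1$, so $\psi_n(\la)\subseteq \mathcal{A}_{d,n-1}$; condition (ii) translates exactly into the first ``nice'' condition (downward closure in the first coordinate), and condition (iii) translates exactly into the second ``nice'' condition (no two horizontally adjacent boxes in the first row). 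Conversely, given a nice set $I\in\mathcal{B}^+_{d,n-1}$, the set $S=\{(i-1)n+j:(i,j)\in I\}$ satisfies (i), (ii), (iii), hence is the $\beta$-set of a partition in $\mathcal{C}_{n,dn+1}$, and $\psi_n$ inverts this. Since a partition is determined by its $\beta$-set, $\psi_n$ is a genuine bijection.

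Next I would evaluate the summand. Fix $\la\in\mathcal{C}_{n,dn+1}$ and set $I=\psi_n(\la)$, so that $\beta(\la)=\{(i-1)n+j:(i,j)\in I\}$ and $|\beta(\la)|=|I|$. Applying Lemma \ref{th:2.1},
\begin{align*}
|\la| = \sum_{x\in\beta(\la)} x - \binom{|\beta(\la)|}{2}
= \sum_{(i,j)\in I}\bigl((i-1)n+j\bigr) - \binom{|I|}{2}
= \sum_{(i,j)\in I}\bigl((i-1)n+j\bigr) - \frac{|I|^2}{2} + \frac{|I|}{2}.
\end{align*}
Raising both sides to the $k$-th power and summing over $\la\in\mathcal{C}_{n,dn+1}$, then re-indexing the sum over $I\in\mathcal{B}^+_{d,n-1}$ via the bijection $\psi_n$, yields exactly \eqref{eq:size_dn+1}.

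There is essentially no obstacle here: the lemma is a bookkeeping step that records the consequence of the bijection in a form convenient for the later analysis of the functions $G^+_{d,m,a,b}(n)$. The only points requiring a word of care are the book-keeping between $n-1$ (the range of the second coordinate for nice sets, since the ``missing'' values $n$ and its multiples are forbidden by the abacus/core condition) and $n$ itself, and the trivial algebraic identity $\binom{m}{2}=\tfrac{m^2}{2}-\tfrac{m}{2}$; neither is a genuine difficulty. I would therefore present the proof in two short sentences: first invoking the already-established bijection $\psi_n$, then substituting into \eqref{eq:2.1}.
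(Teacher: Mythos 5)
Your proposal is correct and follows exactly the paper's own route: the paper derives this lemma in one line from the bijection $\psi_n$ between $\mathcal{C}_{n,dn+1}$ and $\mathcal{B}^+_{d,n-1}$ (established via Theorem \ref{th: beta_dn+1}) combined with the size formula \eqref{eq:2.1} of Lemma \ref{th:2.1}, together with the identity $\binom{|I|}{2}=\frac{|I|^2}{2}-\frac{|I|}{2}$. Your more detailed verification that conditions (i)--(iii) translate into the two ``nice'' conditions is a faithful expansion of what the paper leaves implicit.
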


\medskip
\begin{ex}
Let $d=3$ and $n=3$. By Example \ref{ex:1.9} we know there are $7$ of $(3,10)$-core partitions with distinct parts: $
 \emptyset, (1), (2),  (3,1), (4,2), (5,3,1),(6,4,2).
$
The corresponding nice subsets of $\mathcal{A}_{3,2}$ are:
\begin{align*}
\mathcal{B}^+_{3,2}=\{\
\emptyset,\ &\{(1,1) \},\ \{(1,2) \},\  \{(1,1),(2,1) \},\ \{(1,2),(2,2) \},
\\&
\{(1,1),(2,1),(3,1) \},\  \{(1,2),(2,2),(3,2) \}\  \}.
\end{align*}
Let $k=2$.
It is easy to check that both sides of \eqref{eq:size_dn+1} equals $282$.
\end{ex}

Similarly  the following are characterizations for $\beta$-sets of $(n, dn-1)$-core partitions with distinct parts. Notice that  $dn-1 \notin S$ in the following condition (iv).
\begin{thm}[\cite{NS2,Straub,Xiong2,Za2}]  \label{th: beta_dn-1}
Let $n\geq 2$ and $d\geq 1$    be two positive integers. Then a finite subset $S$ of $\mathbb{N}$ is a $\beta$-set of some
$(n, dn-1)$-core partition with distinct parts iff the following conditions hold:

(iv) $S\subseteq \{ (i-1)n+j: 1 \leq i\leq d,\ 1\leq j \leq n-2  \} \cup \{ in-1: 1 \leq i\leq d-1 \}$;

(v)  If $in+j\in S$ with $i\geq 1$ and $ 1\leq j \leq n-1$, then $(i-1)n+j\in S$;

(vi)  If $j\in S$ with $1\leq j\leq n-2$, then $j+1\notin S$.
\end{thm}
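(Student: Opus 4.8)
The plan is to follow exactly the pattern of the proof of Theorem~\ref{th: beta_dn+1}, invoking only Lemmas~\ref{th:2.2} and~\ref{th:2.3}; the single feature that distinguishes this case from the $(n,dn+1)$ case is that the $(dn-1)$-core condition forces $dn-1\notin S$ (whereas $dn-1$ is permitted there), which is precisely why the residue class $n-1$ in~(iv) only climbs to height $d-2$ instead of $d-1$.

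For necessity, suppose $\lambda\in\mathcal{C}_{n,dn-1}$ and set $S=\beta(\lambda)$, so that $0\notin S$. First, applying Lemma~\ref{th:2.3} with $t=n$ together with $0\notin S$ shows that $S$ contains no positive multiple of $n$: repeatedly subtracting $n$ from such an element would land on $0$. Next, Lemma~\ref{th:2.3} with $t=dn-1$ forces $dn-1\notin S$, since $dn-1\in S$ would give $(dn-1)-(dn-1)=0\in S$. Then I claim $S$ has no element $x\geq dn$: for such an $x$ one may subtract $n$ from $x$ exactly $d$ times (each intermediate value being $\geq n$), so $x-dn\in S$, while Lemma~\ref{th:2.3} with $t=dn-1$ gives $x-(dn-1)\in S$; these two elements of $S$ differ by $1$, contradicting Lemma~\ref{th:2.2}. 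Combining the three facts, every element of $S$ lies in $\{kn+r:\, 0\leq k\leq d-1,\ 1\leq r\leq n-1\}\setminus\{dn-1\}$, which is exactly the set in~(iv). Finally, (v) is immediate from Lemma~\ref{th:2.3} with $t=n$, and (vi) is immediate from Lemma~\ref{th:2.2}.

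For sufficiency, suppose $S$ satisfies (iv)--(vi), and let $\lambda$ be the partition with $\beta(\lambda)=S$. Because (iv) bounds all elements of $S$ by $dn-2<dn-1$, the condition of Lemma~\ref{th:2.3} for $t=dn-1$ holds vacuously, so $\lambda$ is a $(dn-1)$-core. For the $n$-core property: every element of $S$ that is $\geq n$ can be written as $(i-1)n+j$ with $i\geq 2$ and $1\leq j\leq n-1$ (here one uses $in-1=(i-1)n+(n-1)$), and applying (v) with height $i-1\geq 1$ gives $(i-2)n+j=x-n\in S$. For distinct parts: if $x,y\in S$ with $x=y+1$, then, since (iv) forbids multiples of $n$ in $S$, $x$ and $y$ must have the same quotient $h\geq 0$ and residues $r+1$ and $r$ with $1\leq r\leq n-2$; pulling both down to height $0$ by repeated use of (v) produces $r\in S$ and $r+1\in S$, contradicting (vi). Hence by Lemma~\ref{th:2.2}, $\lambda$ has distinct parts, completing the proof.

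I do not anticipate any genuine obstacle: the argument is a routine transcription of the proof of Theorem~\ref{th: beta_dn+1}. The only points that need attention are keeping track of the extra exclusion $dn-1\notin S$, which is what separates the $(n,dn-1)$ case from the $(n,dn+1)$ case, and verifying that every ``pull down via (v)'' step stays inside the index window $1\leq j\leq n-1$ demanded by condition~(v).
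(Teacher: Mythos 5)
Your proof is correct and is exactly the argument the paper intends: the paper omits the proof of this theorem, remarking only that it is ``similar'' to that of Theorem~\ref{th: beta_dn+1}, and your write-up is precisely that adaptation via Lemmas~\ref{th:2.2} and~\ref{th:2.3}, with the one genuinely new point (the exclusion $dn-1\notin S$, forced by $0\notin S$ and the $(dn-1)$-core condition) handled correctly. No gaps.
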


Let $\mathcal{B}^-_{d,n}$ be the set of nice subsets $I$ of $\mathcal{A}_{d,n}$ with $(d,n)\notin I$.
Then by Theorem \ref{th: beta_dn-1} the map $\psi_n$ defined in \eqref{eq:psi} gives a bijection  between the sets $\mathcal{C}_{n,dn-1}$ and
 $\mathcal{B}^-_{d,n-1}$.
 Furthermore, by Lemma \ref{th:2.1} we obtain
 \begin{lem}\label{th:size_dn-1}
\begin{align}\label{eq:size_dn-1}
&\sum_{\la\in \mathcal{C}_{n,dn-1}}|\la|^k=
\sum_{I\in \mathcal{B}^-_{d,n-1}}\left(\sum_{(i,j)\in I} \left((i-1)n+j\right) - \frac{|I|^2}{2}
+ \frac{|I|}{2}
\right)^k
.
\end{align}
 \end{lem}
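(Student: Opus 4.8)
The plan is to mimic verbatim the proof of Lemma \ref{th:size_dn+1}, replacing Theorem \ref{th: beta_dn+1} by its $(n,dn-1)$-analogue Theorem \ref{th: beta_dn-1} and $\mathcal{B}^+_{d,n-1}$ by $\mathcal{B}^-_{d,n-1}$. The two ingredients are: (a) the map $\psi_n$ of \eqref{eq:psi} restricts to a bijection $\mathcal{C}_{n,dn-1}\to\mathcal{B}^-_{d,n-1}$; and (b) Lemma \ref{th:2.1}, which recovers $|\la|$ from $\beta(\la)$. Granting (a), the sum of $k$-th powers of sizes over $\mathcal{C}_{n,dn-1}$ equals the sum over $I\in\mathcal{B}^-_{d,n-1}$, and (b) rewrites each summand in terms of $I$, giving \eqref{eq:size_dn-1}.

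First I would make the bijection explicit and check it lands in $\mathcal{B}^-_{d,n-1}$. If $\la\in\mathcal{C}_{n,dn-1}$, then by Theorem \ref{th: beta_dn-1}(iv) every $x\in\beta(\la)$ lies in $\{(i-1)n+j:1\le i\le d,\ 1\le j\le n-2\}\cup\{in-1:1\le i\le d-1\}$; in particular $1\le x\le dn-2$ and $x$ is never a multiple of $n$ (since $n\ge 2$), so $x$ has a unique expression $x=(i-1)n+j$ with $1\le j\le n-1$, necessarily with $1\le i\le d$ and $(i,j)\neq(d,n-1)$ because the value $dn-1$ is excluded by (iv). Hence $\psi_n(\la)\subseteq\mathcal{A}_{d,n-1}$, $(d,n-1)\notin\psi_n(\la)$, and $|\psi_n(\la)|=|\beta(\la)|$. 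Conditions (v) and (vi) of Theorem \ref{th: beta_dn-1} are exactly the two ``nice'' conditions, so $\psi_n(\la)\in\mathcal{B}^-_{d,n-1}$. Conversely, given $I\in\mathcal{B}^-_{d,n-1}$, set $S:=\{(i-1)n+j:(i,j)\in I\}$ and let $\la$ be the partition with $\beta(\la)=S$; one checks $S$ satisfies (iv)--(vi) — for (iv), a pair $(i,n-1)\in I$ contributes $in-1$ with $i\le d-1$ precisely because $(d,n-1)\notin I$ — so Theorem \ref{th: beta_dn-1} gives $\la\in\mathcal{C}_{n,dn-1}$, and evidently $\psi_n(\la)=I$.

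Then I would compute sizes. With $I=\psi_n(\la)$ one has $\beta(\la)=\{(i-1)n+j:(i,j)\in I\}$ and $|\beta(\la)|=|I|$, so Lemma \ref{th:2.1} yields
$$
|\la|=\sum_{(i,j)\in I}\bigl((i-1)n+j\bigr)-\binom{|I|}{2}=\sum_{(i,j)\in I}\bigl((i-1)n+j\bigr)-\frac{|I|^2}{2}+\frac{|I|}{2}.
$$
Raising this identity to the $k$-th power and summing over $\la\in\mathcal{C}_{n,dn-1}$, equivalently over $I\in\mathcal{B}^-_{d,n-1}$ through the bijection, gives \eqref{eq:size_dn-1}.

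Since every step merely transcribes the already-established $(n,dn+1)$-case, there is no genuine obstacle; the only point needing a little care is the bookkeeping in the inverse direction, namely verifying that the single extra constraint $(d,n-1)\notin I$ is exactly what keeps $dn-1$ out of $S$ and thereby matches condition (iv) of Theorem \ref{th: beta_dn-1}.
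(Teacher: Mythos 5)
Your proposal is correct and follows exactly the route the paper takes: the paper also deduces the lemma from the bijection $\psi_n:\mathcal{C}_{n,dn-1}\to\mathcal{B}^-_{d,n-1}$ given by Theorem \ref{th: beta_dn-1} together with the size formula of Lemma \ref{th:2.1}, merely stating these two facts without the detailed verification you supply. Your extra bookkeeping (in particular that $in-1=(i-1)n+(n-1)$ with $i\le d-1$ corresponds precisely to the constraint $(d,n-1)\notin I$) is accurate and fills in what the paper leaves implicit.
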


\medskip
\begin{ex}
Let $d=3$ and $n=3$. Then there are $6$ of $(3,8)$-core partitions with distinct parts: $
 \emptyset, (1), (2),  (3,1), (4,2), (5,3,1).
$
The corresponding nice subsets of $\mathcal{A}_{3,2}$ are:
\begin{align*}
\mathcal{B}^-_{3,2}=\{\  
\emptyset,\ &\{(1,1) \},\ \{(1,2) \},\  \{(1,1),(2,1) \},\ \{(1,2),(2,2) \},
\{(1,1),(2,1),(3,1) \} \ 
\}.
\end{align*}
Let $k=2$.
Then both sides of \eqref{eq:size_dn-1} equals $138$.
\end{ex}

\section{Polynomiality of  moments of sizes for core partitions}
\label{sec:4}

In this section, we will prove the main results.

For each nice subset $I$ of $\mathcal{A}_{d,n}$, let $|I|$ be the cardinality of $I$. Define
$$
\sigma_m(I):=\sum_{(i,j)\in I} \left((i-1)m+j\right)
$$
and
$$
G^+_{d,m,a,b}(n):= \sum_{I\in \mathcal{B}^+_{d,n}}\sigma_m(I)^a \,   |I|^b
$$
for $d,m,a,b,n\geq 0$.

To compute the $k$-th power sum of sizes of  partitions in $\mathcal{C}_{n,dn+1}$, by Lemma \ref{eq:size_dn+1} we just need to compute the functions $G^+_{d,n,a,b}(n-1)$ with four variables $d,n,a,b$. The basic idea is induction on $n$. To do this, we need one more parameter $m$ here. That is, we study a more general family of functions $G^+_{d,m,a,b}(n)$ with five variables $d,m,a,b,n$.
First we derive formulas for  generating functions of~$G^+_{d,m,a,b}(n)$.  

\begin{thm}\label{th:3.1}
Assume that $a$ and $b$ are two nonnegative integers. For each $1\leq i\leq 2a+b+1$,  there exists some polynomial $P_{a,b,i}(d,m,q)$ of $d$, $m$ and $q$ with  $\deg_m (P_{a,b,i})\leq 2a+b+1-i$,  such that
the generating function of $G^+_{d,m,a,b}(n)$ equals:
\begin{align}\label{eq: G_generating}
\Psi_{d,m,a,b}:=\sum_{n\geq 0}G^+_{d,m,a,b}(n)\,  q^n=\sum_{i=1}^{2a+b+1}\frac{P_{a,b,i}(d,m,q)}{(1-q-dq^2)^{i}}.
\end{align}
\end{thm}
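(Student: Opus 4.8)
The plan is to prove \eqref{eq: G_generating} by induction on $n$, setting up a recurrence for $G^+_{d,m,a,b}(n)$ that peels off the last column of $\mathcal{A}_{d,n}$. Recall that a nice subset $I\in\mathcal{B}^+_{d,n}$ is determined by how many boxes it contains in each column, with the constraint that column $j$ contains $(1,j),\dots,(r_j,j)$ for some $0\le r_j\le d$, together with condition (2): if $r_{n}\ge 1$ then we cannot also have $r_{n-1}\ge 1$ \emph{in the first row}, i.e. if $(1,n-1)\in I$ then $(1,n)\notin I$. So I would split $\mathcal{B}^+_{d,n}$ according to the status of the last column: either column $n$ is empty (contributing exactly the subsets of $\mathcal{B}^+_{d,n-1}$), or column $n$ is nonempty, in which case column $n-1$ must be empty, so the remaining columns $1,\dots,n-2$ form an arbitrary element of $\mathcal{B}^+_{d,n-2}$. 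This gives, for a subset with $r\in\{1,\dots,d\}$ boxes in column $n$ contributing $\sigma_m$-increment $\sum_{i=1}^{r}((i-1)m+n)=\binom r2 m+rn$ and $|I|$-increment $r$, a recurrence of the shape
\begin{align*}
G^+_{d,m,a,b}(n)=G^+_{d,m,a,b}(n-1)+\sum_{r=1}^{d}\sum_{a'\le a}\sum_{b'\le b}\binom{a}{a'}\binom{b}{b'}\Bigl(\tbinom r2 m+rn\Bigr)^{a-a'} r^{\,b-b'}\,G^+_{d,m,a',b'}(n-2),
\end{align*}
valid for $n\ge 2$, with small explicit initial values for $n=0,1$. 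The key point is that the coefficient $\bigl(\binom r2 m+rn\bigr)^{a-a'}$, summed over $r$ from $1$ to $d$, is a polynomial in $n$, $m$, $d$ whose $m$-degree is $a-a'$ and whose $n$-degree is $a-a'$ as well.

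Next I would translate this recurrence into generating functions. Multiplying by $q^n$ and summing over $n\ge 0$, the term $G^+_{d,m,a,b}(n-1)$ becomes $q\,\Psi_{d,m,a,b}$, and each term $n^{\,c}\,G^+_{d,m,a',b'}(n-2)$ becomes $q^2\bigl(q\frac{d}{dq}\bigr)^{\!c}\Psi_{d,m,a',b'}$ up to lower-order corrections; collecting everything,
\begin{align*}
(1-q)\,\Psi_{d,m,a,b}=R_{a,b}(q)+q^2\sum_{(a',b')\ne(a,b)}(\text{polynomial in }d,m\text{ and }q\tfrac{d}{dq})\bigl[\Psi_{d,m,a',b'}\bigr],
\end{align*}
where $R_{a,b}(q)$ is a polynomial coming from the initial terms, and the diagonal term $a'=a,b'=b$ (coming from $r$ contributing only to $|I|^{b-b'}$ with $a'=a$, i.e. the $\binom r2 m+rn$ factor raised to the $0$th power) produces exactly $dq^2\Psi_{d,m,a,b}$, which we move to the left to get the operator $1-q-dq^2$. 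So
\begin{align*}
(1-q-dq^2)\,\Psi_{d,m,a,b}=R_{a,b}(q)+q^2\sum_{\substack{a'\le a,\ b'\le b\\ a'<a}}(\cdots)\bigl[\Psi_{d,m,a',b'}\bigr]+q^2\sum_{b'<b}d\,(\cdots)\bigl[\Psi_{d,m,a,b'}\bigr].
\end{align*}
Now I induct on $2a+b$. The base case $a=b=0$ gives $\Psi_{d,m,0,0}=\sum_n |\mathcal{B}^+_{d,n}|q^n$, and one checks directly (via the same column recurrence specialized to $a=b=0$) that this equals $\frac{P_{0,0,1}(d,q)}{1-q-dq^2}$ for an explicit polynomial $P_{0,0,1}$ with $\deg_m=0$, matching $2a+b+1=1$. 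For the inductive step, every $\Psi_{d,m,a',b'}$ appearing on the right with $2a'+b'<2a+b$ is, by hypothesis, a sum $\sum_{i=1}^{2a'+b'+1}P_{a',b',i}(d,m,q)/(1-q-dq^2)^i$; applying the operator $q\frac{d}{dq}$ to such a term raises the power of $(1-q-dq^2)$ in the denominator by at most $1$ (since $\frac{d}{dq}(1-q-dq^2)^{-i}=i(1+2dq)(1-q-dq^2)^{-i-1}$) and keeps the numerator polynomial; multiplying by $q^2$ and by polynomials in $d,m$ of controlled degree keeps things polynomial and tracks the $m$-degree. Finally dividing the whole right-hand side by the extra factor $(1-q-dq^2)$ produced on the left gives the claimed form with $i$ ranging up to $2a+b+1$, and a bookkeeping check confirms $\deg_m(P_{a,b,i})\le 2a+b+1-i$: each application of $q\frac{d}{dq}$ that raises the denominator power by one is accompanied in the recurrence by a drop of one in the available $m$-degree (because the $m$-power $a-a'$ equals the number of derivative applications $c=a-a'$), so the two effects are synchronized.

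The main obstacle I anticipate is the careful degree bookkeeping in the inductive step: one must verify simultaneously that (a) the denominators never exceed power $2a+b+1$, and (b) the numerator attached to denominator power $i$ has $m$-degree at most $2a+b+1-i$. The subtlety is that a single term on the right-hand side can arise from an $m$-power $(a-a')$ combined with $c=a-a'$ derivative-applications, and one needs the identity (number of $q\frac{d}{dq}$'s) $=$ ($m$-exponent coming from $\binom r2 m$) to make the two tallies cancel; this forces one to be precise about which contributions to $\bigl(\binom r2 m+rn\bigr)^{a-a'}$ carry an $m$ and which carry an $n$, since only the $n$'s turn into derivatives. A clean way to handle this is to expand $\bigl(\binom r2 m+rn\bigr)^{a-a'}=\sum_{p+p'=a-a'}\binom{a-a'}{p}\binom r2^{p}m^{p}(rn)^{p'}$ and observe that the term with $m^p$ contributes $p$ to the $m$-degree and $p'=a-a'-p$ derivative applications, so in the worst case ($p=a-a'$, $p'=0$) we gain $m$-degree $a-a'$ and no new denominator power, while in the case $p=0$ we gain no $m$-degree and $a-a'$ new denominator powers — in all intermediate cases the sum ($m$-degree gained)$+$(denominator power gained) stays bounded by $a-a'$, which is exactly what feeds the inductive invariant $\deg_m(P_{a,b,i})+i\le 2a+b+1$. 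Once this invariant is set up correctly, the rest is routine manipulation of rational functions, and the initial-value polynomials $R_{a,b}(q)$ contribute only to the $i=1,\dots$ terms with $m$-degree $0$, causing no trouble.
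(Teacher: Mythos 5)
Your proposal is correct and follows essentially the same route as the paper: the same last-column decomposition yielding the recurrence $G^+_{d,m,a,b}(n)=G^+_{d,m,a,b}(n-1)+d\,G^+_{d,m,a,b}(n-2)+\sum A^{a,b}_{a',b'}(d,m,n)G^+_{d,m,a',b'}(n-2)$, the same passage to generating functions with $n^c$ handled by $q\frac{d}{dq}$, and the same degree invariant $\deg_m+\deg_n\leq a-a'$ (equivalently, $m$-degree gained plus denominator power gained is at most $a-a'$) driving the induction. The only cosmetic difference is that you induct on $2a+b$ while the paper inducts on $a+b$, which changes nothing of substance.
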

\begin{proof}
We will prove this result by induction on $a+b$. When $a+b=0$, we have $a=b=0$. For $n\geq 2$, 
\begin{align*}
G^+_{d,m,0,0}(n)&= \sum_{I\in \mathcal{B}^+_{d,n}}1 = |\mathcal{B}^+_{d,n}|
\\&=
\sum_{I\in \mathcal{B}^+_{d,n-1}}1+\sum_{I\in \mathcal{B}^+_{d,n}\setminus \mathcal{B}^+_{d,n-1}}1
\\&=
\sum_{I\in \mathcal{B}^+_{d,n-1}}1+\sum_{(1,n)\in I\in \mathcal{B}^+_{d,n}}1.
\end{align*}
When $(1,n)\in I\in \mathcal{B}^+_{d,n}$, we know $(1,n-1)\not\in I$ and therefore $I\cap \mathcal{A}_{d,n-1} \in \mathcal{B}^+_{d,n-2}$.
Thus for each $1\leq i\leq d$, 
$$
| \{  I\in \mathcal{B}^+_{d,n}:\  (i,n)\in I,\  (i+1,n)\notin I       \} | =   |\mathcal{B}^+_{d,n-2}|.
$$
Therefore
\begin{align}\label{eq:00_1}
G^+_{d,m,0,0}(n)&=
|\mathcal{B}^+_{d,n-1}|+d\, |\mathcal{B}^+_{d,n-2}|
=
G^+_{d,m,0,0}(n-1)+d\, G^+_{d,m,0,0}(n-2)
\end{align}
for $n\geq 2$.
By definition it is easy to derive:
\begin{align}\label{eq:00_2}
G^+_{d,m,0,0}(0)=1,\ \ \ G^+_{d,m,0,0}(1)=d+1.
\end{align}
Therefore
$$
\Psi_{d,m,0,0}-(d+1)q-1=q\, (\Psi_{d,m,0,0}-1)+d q^2\Psi_{d,m,0,0}
$$
and thus
\begin{align}\label{eq:psidm00}
\Psi_{d,m,0,0}=\frac{dq+1}{1-q-dq^2}.
\end{align}
Then the theorem is true for $a+b=0$.
Next assume that $a+b>0$ and \eqref{eq: G_generating} holds for all  pairs $(a',b')$ with $a'+b'<a+b$.  For $n\geq 2$, considering the largest integer $i$ such that $(i,n)\in I$ (or $(1,n)\notin I$), we obtain
\begin{align}\label{eq:G_recursion}
G^+_{d,m,a,b}(n)&= \sum_{I\in \mathcal{B}^+_{d,n}} \sigma_m(I)^a |I|^b 
= \sum_{(1,n)\notin  I\in \mathcal{B}^+_{d,n}} \sigma_m(I)^a |I|^b + 
\sum_{(1,n)\in  I\in \mathcal{B}^+_{d,n}} \sigma_m(I)^a |I|^b
\nonumber
\\&=
\sum_{I\in \mathcal{B}^+_{d,n-1}} \sigma_m(I)^a |I|^b +\sum_{i=1}^d \, \,    \sum\limits_{\substack{(i,n)\in  I\in \mathcal{B}^+_{d,n} \\  (i+1,n)\notin  I } }\sigma_m(I)^a |I|^b\nonumber
\\&=
\sum_{I\in \mathcal{B}^+_{d,n-1}} \sigma_m(I)^a |I|^b +\sum_{I\in \mathcal{B}^+_{d,n-2}}\sum_{i=1}^d \left(\sigma_m(I)+\binom{i}{2}m+in\right)^a (|I|+i)^b \nonumber
\\&=
G^+_{d,m,a,b}(n-1) +  d\, G^+_{d,m,a,b}(n-2)+ \sum\limits_{\substack{a'+b'< a+b\\  0\leq a'\leq a \\  0\leq b'\leq b}}\, A^{a,b}_{a',b'}(d,m,n) \,G^+_{d,m,a',b'}(n-2)
\end{align}
where 
$$
A^{a,b}_{a',b'}(d,m,n)=\binom{a}{a'}\binom{b}{b'}\sum_{i=1}^d \left(\binom{i}{2}m+in \right)^{a-a'} i^{b-b'} 
$$ 
are polynomials of $d,m,n$ such that
$$
\deg_m A^{a,b}_{a',b'} + \deg_n A^{a,b}_{a',b'} \leq a-a'.
$$

It is obvious that, when $a+b>0$,
\begin{align}\label{eq:G_generating1}
G^+_{d,m,a,b}(0)=0,\ \ \  G^+_{d,m,a,b}(1)=\sum_{i=1}^d \left(\binom{i}{2}m+i \right)^a i^b= \sum_{k=0}^a B^{a,b}_k(d)\  m^k,
\end{align}
where
$$
B^{a,b}_k(d)=\binom{a}{k}\sum_{i=1}^d \binom{i}{2}^{k} i^{a-k+b}.
$$

Considering the generating function, by \eqref{eq:G_recursion} we have
\begin{align}\label{eq:G_generating2}
\Psi_{d,m,a,b}\  - & \   q\,  G^+_{d,m,a,b}(1)  \nonumber
\\&=
q\Psi_{d,m,a,b}+dq^2\Psi_{d,m,a,b}+ \sum\limits_{\substack{a'+b'< a+b\\  0\leq a'\leq a \\  0\leq b'\leq b}}\,  \sum_{n\geq 2}\, A^{a,b}_{a',b'}(d,m,n) G^+_{d,m,a',b'}(n-2)\ q^n \nonumber
\\&=
q\Psi_{d,m,a,b}+dq^2\Psi_{d,m,a,b}+ q^2 \sum\limits_{\substack{a'+b'< a+b\\  0\leq a'\leq a \\  0\leq b'\leq b}}\, \sum_{n\geq 0}\, A^{a,b}_{a',b'}(d,m,n+2) G^+_{d,m,a',b'}(n)\ q^n.
\end{align}

When $a'+b'< a+b$,
by induction hypothesis and $$ \sum_{n\geq 0} na_n\, q^n = q\,  (\sum_{n\geq 0} a_n q^n )'$$ we obtain
\begin{align*}
\sum_{n\geq 0} n^jG^+_{d,m,a',b'}(n)\ q^n = \sum_{i=1}^{2a'+b'+1+j}\frac{C_{j,a',b',i}(d,m,q)}{(1-q-dq^2)^{i}}
\end{align*}
for each $j\geq 0$, 
where $C_{j,a',b',i}(d,m,q)$ are some polynomials  of $d$, $m$ and $q$ with
$$
\deg_m (C_{j,a',b',i}(d,m,q))\leq 2a'+b'+1+j-i.
$$
Therefore for $a'+b'< a+b$, we have 
\begin{align} \label{eq:G_generating3}
\sum_{n\geq 0}\, A^{a,b}_{a',b'}(d,m,n+2)\, G^+_{d,m,a',b'}(n)\ q^n
&= \sum_{i=1}^{2a'+b'+1+a-a'}\frac{D_{a',b',i}(d,m,q)}{(1-q-dq^2)^{i}},
\end{align}
where $D_{a',b',i}(d,m,q)$ are some polynomials  of $d$, $m$ and $q$ with
$$
\deg_m (D_{a',b',i}(d,m,q))\leq 2a'+b'+1+a-a'-i=a+a'+b'+1-i\leq 2a+b-i.
$$
Then by \eqref{eq:G_generating1}, \eqref{eq:G_generating2} and \eqref{eq:G_generating3} we obtain
\begin{align*}
\Psi_{d,m,a,b}=\sum_{n\geq 0}G^+_{d,m,a,b}(n) q^n=\sum_{i=1}^{2a+b+1}\frac{P_{a,b,i}(d,m,q)}{(1-q-dq^2)^{i}},
\end{align*}
where $P_{a,b,i}(d,m,q)$ are some polynomials of $d$, $m$ and $q$ with  \begin{align*}
\ \ \ \ \   \ \ \ \ \   \ \ \ \ \   \ \ \ \ \   \ \ \ \ \   \ \ \ \ \ 
\deg_m (P_{a,b,i}(d,m,q))\leq 2a+b+1-i.      \ \ \ \ \   \ \ \ \ \   \ \ \ \ \   \ \ \ \ \   \ \ \ \ \   \ \ \ \ \  \ \ \ \ \ \ \ \ \ \ \ \ \ \ \ \ \ \ \ \ \ \ \ \ \  \qedhere
\end{align*}
\end{proof}

\medskip
By the above theorem, to derive the explicit expression for $G^+_{d,m,a,b}(n)$, we need to study the expansion of $ 1/{(1-q-dq^2)^{k}} $.
Let $x_{d}=({1+\sqrt{1+4d}})/{2}$ and $y_{d} = ({1-\sqrt{1+4d}})/{2}$ be two roots of $x^2-x-d$.
By the partial fraction decomposition, we obtain the following results.
\begin{lem} \label{th:3.2}
Let $d$ and $k$ be given positive integers.
Then
\begin{align}\label{eq:G_generating4}
\frac{1}{(1-q-dq^2)^{k}} = \sum_{i=1}^k \, \frac{ \binom{2k-1-i}{k-1} d^{k-i}  } { {(1+4d)}^{\frac{2k-i}2}}
\, \sum_{n\geq 0}\, 	\binom{n+i-1}{i-1} \left(x_d^{n+i}+(-1)^iy_d^{n+i}\right)\, q^n.
\end{align}

\end{lem}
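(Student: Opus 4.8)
The plan is to prove Lemma \ref{th:3.2} by the method of partial fractions, treating $d$ as a fixed parameter and working in the rational function field $\mathbb{C}(q)$. First I would factor the denominator: since $x^2-x-d=(x-x_d)(x-y_d)$ with $x_d+y_d=1$ and $x_dy_d=-d$, one has $1-q-dq^2=-d(q^2+q/d-1/d)$; more usefully, writing the reciprocal polynomial, $1-q-dq^2 = (1-x_dq)(1-y_dq)$, because $(1-x_dq)(1-y_dq)=1-(x_d+y_d)q+x_dy_dq^2 = 1-q-dq^2$. Hence $(1-q-dq^2)^{-k}=(1-x_dq)^{-k}(1-y_dq)^{-k}$, and the task reduces to a partial-fraction decomposition of this product into terms $(1-x_dq)^{-j}$ and $(1-y_dq)^{-j}$ for $1\le j\le k$, followed by expanding each such term via the binomial series $(1-tq)^{-j}=\sum_{n\ge0}\binom{n+j-1}{j-1}t^nq^n$.

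The key computational step is to identify the coefficients in
\begin{align*}
\frac{1}{(1-x_dq)^{k}(1-y_dq)^{k}} = \sum_{j=1}^{k}\frac{\alpha_j}{(1-x_dq)^{j}} + \sum_{j=1}^{k}\frac{\beta_j}{(1-y_dq)^{j}}.
\end{align*}
By the standard residue/Taylor-expansion formula for partial fractions, $\alpha_j$ is obtained from the Taylor coefficients of $(1-y_dq)^{-k}$ expanded around the point $q=1/x_d$: explicitly, $\alpha_{k-r}$ is $\tfrac{1}{r!}$ times the $r$-th derivative (in $q$) of $(1-y_dq)^{-k}$ evaluated at $q=1/x_d$, up to the factor coming from $(1-x_dq)^k = x_d^k(1/x_d - q)^k$. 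Carrying this out, each derivative brings down a factor of $y_d$ and a falling-factorial coefficient, while $(1-y_d/x_d) = (x_d-y_d)/x_d = \sqrt{1+4d}/x_d$; collecting powers, one finds $\alpha_{i} = \binom{2k-1-i}{k-1}d^{k-i}x_d^{i}/(1+4d)^{(2k-i)/2}$ after simplification using $x_dy_d=-d$, and symmetrically $\beta_i = (-1)^i\binom{2k-1-i}{k-1}d^{k-i}y_d^{i}/(1+4d)^{(2k-i)/2}$ (the sign $(-1)^i$ arising because the roles of $x_d,y_d$ are swapped and $x_d-y_d$ vs $y_d-x_d$ differ by a sign, raised to the relevant power). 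Substituting the binomial expansions of $(1-x_dq)^{-i}$ and $(1-y_dq)^{-i}$ and combining the two sums term by term in $q^n$ yields exactly the right-hand side of \eqref{eq:G_generating4}.

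I expect the main obstacle to be the bookkeeping in the partial-fraction coefficient computation: getting the binomial coefficient $\binom{2k-1-i}{k-1}$, the power $d^{k-i}$, the half-integer power $(1+4d)^{(2k-i)/2}$, and the sign $(-1)^i$ all to come out correctly and consistently simultaneously. A clean way to avoid iterated differentiation is to use the known closed form for the convolution of two such binomial series — i.e., directly expand $\sum_{n}\big(\sum_{a+b=n}\binom{a+k-1}{k-1}\binom{b+k-1}{k-1}x_d^a y_d^b\big)q^n$ — or alternatively to verify the claimed identity by clearing denominators and checking the resulting polynomial identity in $q$ degree by degree, using the relations $x_d+y_d=1$, $x_dy_d=-d$, $x_d-y_d=\sqrt{1+4d}$. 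Either route is routine once set up; the only real care needed is tracking the $\sqrt{1+4d}$ factors and the parity-dependent sign, which is why separating the $x_d$-part and $y_d$-part (rather than symmetrizing prematurely) is the safest organization.
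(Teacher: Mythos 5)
Your proposal is correct and follows essentially the same route as the paper: factor $1-q-dq^2=(1-x_dq)(1-y_dq)$, perform the partial-fraction decomposition of $(1-x_dq)^{-k}(1-y_dq)^{-k}$ into the terms $(1-x_dq)^{-i}$ and $(1-y_dq)^{-i}$, simplify the coefficients using $x_dy_d=-d$ and $x_d-y_d=\sqrt{1+4d}$, and expand by the binomial series; your claimed coefficients $\alpha_i$ and $\beta_i$ check out. The only (immaterial) difference is that the paper obtains the partial-fraction coefficients by induction on the two-index family $F_{a,b}=(1-x_dq)^{-a}(1-y_dq)^{-b}$ via the recurrence $F_{a+1,b+1}=\frac{x_d}{x_d-y_d}F_{a+1,b}+\frac{y_d}{y_d-x_d}F_{a,b+1}$ and then sets $a=b=k$, rather than by the Taylor/residue formula you use.
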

\begin{proof}
For $a,b\geq 0$, let
$$
F_{a,b}= \frac{1}{(1-x_{d}\, q)^a (1-y_{d}\, q)^b}.
$$
It is easy to see that
$$
F_{a+1,b+1}= \frac{x_{d}}{x_{d}-y_{d}}F_{a+1,b}+ \frac{y_{d}}{y_{d}-x_{d}}F_{a,b+1}
$$
for all $a,b\geq 0$. Therefore by induction we derive
$$
F_{a,b}= \sum_{i=1}^a \frac{ (-1)^{a-i} \binom{a+b-1-i}{b-1} \, x_d^b\, y_d^{a-i} } { {(x_d-y_d)}^{a+b-i}(1-x_{d}\, q)^i}
+
\sum_{j=1}^b \frac{ (-1)^{a} \binom{a+b-1-j}{a-1}\,   x_d^{b-j}\, y_d^{a}} { {(x_d-y_d)}^{a+b-j}(1-y_{d}\, q)^i}
$$
for all $a,b\geq 1$.
Let $a=b=k$. Then by $x_d\, y_d=-d$,  $x_d-y_d=\sqrt{1+4d}$, and
$$
\frac{1}{(1-zq)^k}= \sum_{n\geq 0}\binom{n+k-1}{k-1}z^nq^n,
$$
we derive \eqref{eq:G_generating4}.
\end{proof}

\begin{lem} \label{th:3.5}
Let $k$ be a positive integer.
Then
$$
\frac{1}{(1-q-dq^2)^{k}}=
\sum_{n\geq 0} c_n\,  q^n
$$
where $c_n$
is of the form $A(n,d) M_d(n) + B(n,d)M_d(n+1)$, such that $A(n,d)$ and $B(n,d)$ are polynomials of $n$ with degrees at most $k-1$, whose coefficients are rational functions in $d$. In particular, we have (notice that $M_d(n+2)=M_d(n+1)+dM_d(n)$)
\begin{align}\label{eq:4.3.1}
\frac{1}{1-q-dq^2} =
\sum_{n\geq 0} M_d(n)\, q^n;
\end{align}
\begin{align}\label{eq:4.3.2}
\frac{1}{(1-q-dq^2)^{2}} =
\sum_{n\geq 0} \frac{1}{4d+1} \bigl(
(n+1)\,M_d(n+2)+(n+3)\,d\, M_d(n)
\bigr)\, q^n  ;
\end{align}
and
\begin{align}\label{eq:4.3.3}
 \frac{1}{(1-q-dq^2)^{3}} &=
\sum_{n\geq 0}\Biggl(  \Bigl(\,  \frac{3d(n+1)}{(4d+1)^2}  +
\frac{1}{4d+1} \binom{n+2}{2}
\Bigr)\cdot M_d(n+2)
+
\frac{3d^2 (n+3)}{(4d+1)^2}\,  M_d(n)\Biggr)\, q^n.  
\end{align}

\end{lem}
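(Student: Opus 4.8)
The plan is to read the claimed form off directly from the explicit expansion already obtained in Lemma~\ref{th:3.2}. The starting point is the Binet-type formula $M_d(n)=(x_d^{n+1}-y_d^{n+1})/(x_d-y_d)$, which is immediate from the recurrence~\eqref{eq:Mds} together with the initial values $M_d(-1)=0,\ M_d(0)=1$ (this is, equivalently, \eqref{eq:4.3.1} itself, since the coefficients of $1/(1-q-dq^2)$ satisfy the same recurrence and initial conditions). Introduce the companion sequence $L_d(n):=x_d^n+y_d^n$. Using $x_d+y_d=1$, so that $2x_d-1=x_d-y_d$ and $2y_d-1=-(x_d-y_d)$, one obtains the conversion identity $L_d(n)=2M_d(n)-M_d(n-1)$. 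More conceptually, the sequences $n\mapsto M_d(n)$ and $n\mapsto M_d(n+1)$ are linearly independent (evaluate at $n=-1$ and $n=0$), hence form a $\mathbb{Q}(d)$-basis of the two-dimensional space of solutions of $u_n=u_{n-1}+du_{n-2}$; consequently every $\mathbb{Q}(d)$-valued solution sequence --- in particular each $n\mapsto L_d(n+j)$ and each $n\mapsto M_d(n+j)$ --- equals $p(d)M_d(n)+r(d)M_d(n+1)$ with $p,r\in\mathbb{Q}(d)$, and with $p,r\in\mathbb{Z}[d]$ when $j\geq 0$ (iterate the recurrence forward).

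Next I apply Lemma~\ref{th:3.2}, so that the coefficient of $q^n$ in $1/(1-q-dq^2)^k$ is
$$c_n=\sum_{i=1}^k\frac{\binom{2k-1-i}{k-1}d^{k-i}}{(1+4d)^{(2k-i)/2}}\binom{n+i-1}{i-1}\bigl(x_d^{n+i}+(-1)^iy_d^{n+i}\bigr),$$
and I split the sum by the parity of $i$. If $i$ is even, then $x_d^{n+i}+(-1)^iy_d^{n+i}=L_d(n+i)$ while $(1+4d)^{(2k-i)/2}=(1+4d)^{k-i/2}$ is an integer power. If $i$ is odd, then $x_d^{n+i}+(-1)^iy_d^{n+i}=x_d^{n+i}-y_d^{n+i}=(x_d-y_d)M_d(n+i-1)=\sqrt{1+4d}\,M_d(n+i-1)$, and this factor $\sqrt{1+4d}$ cancels exactly against the half-integer exponent, leaving $(1+4d)^{k-(i+1)/2}$ in the denominator. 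In either case the $i$-th summand is a rational function of $d$ times $\binom{n+i-1}{i-1}$ times a $\mathbb{Q}(d)$-valued solution sequence evaluated at an argument $\geq n$ (namely $n+i$ and $n+i-1$ when $i$ is even, and $n+i-1$ when $i$ is odd --- all $\geq n$ since $i\geq 1$). Rewriting each such sequence through the basis $\{M_d(n),M_d(n+1)\}$ as in the first paragraph and collecting terms yields $c_n=A(n,d)M_d(n)+B(n,d)M_d(n+1)$ with $A,B$ having coefficients in $\mathbb{Z}[d,(1+4d)^{-1}]\subseteq\mathbb{Q}(d)$. All the $n$-dependence lives in the factors $\binom{n+i-1}{i-1}$, which are polynomials in $n$ of degree $i-1\leq k-1$; hence $\deg_n A,\ \deg_n B\leq k-1$, which is the main assertion.

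Finally the three displayed special cases follow by direct substitution. For $k=1$ only the $i=1$ term survives and equals $(1+4d)^{-1/2}(x_d^{n+1}-y_d^{n+1})=M_d(n)$, giving~\eqref{eq:4.3.1}. For $k=2$ the $i=1$ term is $\tfrac{2d}{4d+1}M_d(n)$ and the $i=2$ term is $\tfrac{n+1}{4d+1}L_d(n+2)$; substituting $L_d(n+2)=2M_d(n+2)-M_d(n+1)=M_d(n+2)+dM_d(n)$ and simplifying produces~\eqref{eq:4.3.2}. For $k=3$ the three terms are $\tfrac{6d^2}{(4d+1)^2}M_d(n)$, $\tfrac{3d(n+1)}{(4d+1)^2}L_d(n+2)$ and $\tfrac{1}{4d+1}\binom{n+2}{2}M_d(n+2)$; substituting the same expression for $L_d(n+2)$ and reading off the coefficients of $M_d(n+2)$ and of $M_d(n)$ gives~\eqref{eq:4.3.3}. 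These are routine computations.

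I expect the only delicate point to be the bookkeeping in the second paragraph: one must check that the $\sqrt{1+4d}$ in the denominator is cancelled for \emph{every} odd $i$ (which is precisely why the parity split is forced) and that no shift $M_d(n+j)$ with $j<0$ ever occurs, since such a shift would introduce extra powers of $d$ in the denominators; the bound $\deg_n A,\deg_n B\le k-1$ must likewise be tracked through the re-expansion step, where it is transparent because that step alters only the coefficient functions of $d$ and not the binomial factors in $n$.
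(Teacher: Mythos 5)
Your proposal is correct and follows essentially the same route as the paper: it invokes Lemma~\ref{th:3.2}, converts $x_d^{n+i}\pm y_d^{n+i}$ into $M_d$-expressions via the Binet formula \eqref{eq:3.5.1} and the companion identity \eqref{eq:3.5.2}, reads the degree bound in $n$ off the binomial factors, and then re-expands the shifted terms $M_d(n+j)$ in the basis $\{M_d(n),M_d(n+1)\}$ using the recurrence. Your explicit parity bookkeeping and the verification that the $\sqrt{1+4d}$ factors cancel are details the paper leaves implicit, but the argument and the computations for $k=1,2,3$ all check out.
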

\begin{proof}
By the recurrence relation \eqref{eq:Mds}, it is easy to see that
\begin{align}\label{eq:3.5.1}
M_d(n)=\frac{1}{\sqrt{1+4d} }\, (x_d^{n+1}-y_d^{n+1})
\end{align}
and
\begin{align}\label{eq:3.5.2}
2M_d(n+1)-M_d(n)={x_d^{n+1}+y_d^{n+1}}.
\end{align}
Therefore Lemma \ref{th:3.2} implies
$$
\frac{1}{(1-q-dq^2)^{k}}=
\sum_{n\geq 0} c_nq^n
$$
where $c_n$
is of the form $\sum_{i=0}^k A_i(n,d) M_d(n+i)$, such that each $A_i(n,d)$  is a   polynomial of $n$ with degree at most $k-1$, whose coefficients are rational functions in $d$. But by \eqref{eq:Mds}, each $M_d(n+i)$ can be written as some linear combination of $M_d(n)$ and $M_d(n+1)$,  whose coefficients are rational functions in $d$. Therefore we prove the main result of the lemma. In particular, let $k=1,2,3$ in Lemma \ref{th:3.2}, we derive \eqref{eq:4.3.1}, \eqref{eq:4.3.2} and \eqref{eq:4.3.3}.
\end{proof}

Notice that for each $i\in \mathbb{Z}$, $M_d(n+i)$ can be written as some linear combination of $M_d(n)$ and $M_d(n+1)$, whose coefficients are rational functions in $d$. 
The next result follows from Theorem \ref{th:3.1} and Lemma \ref{th:3.5} directly.

\begin{thm} \label{th:4.4}
Let $a,b\geq 0$ be some given integers. Then
$
G^+_{d,m,a,b}(n)
$
is of the form $$A(m,n,d)\,  M_d(n) + B(m,n,d)\, M_d(n+1),$$ where $A(m,n,d)$ and $B(m,n,d)$ are   polynomials of $m$ and $n$ with degrees at most $2a+b$ (that is, $\deg_m+\deg_n\leq 2a+b$), whose coefficients are rational functions in $d$.
\end{thm}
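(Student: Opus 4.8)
The plan is to combine Theorem~\ref{th:3.1} with Lemma~\ref{th:3.5} and read off the claimed degree bounds. By Theorem~\ref{th:3.1}, the generating function $\Psi_{d,m,a,b}=\sum_{n\geq 0}G^+_{d,m,a,b}(n)\,q^n$ is a sum $\sum_{i=1}^{2a+b+1}P_{a,b,i}(d,m,q)/(1-q-dq^2)^i$, where each $P_{a,b,i}$ is a polynomial in $d,m,q$ with $\deg_m(P_{a,b,i})\leq 2a+b+1-i$. So the first step is to expand each term $P_{a,b,i}(d,m,q)/(1-q-dq^2)^i$ as a power series in $q$ and extract the coefficient of $q^n$.

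For a fixed $i$, write $P_{a,b,i}(d,m,q)=\sum_r p_{i,r}(d,m)\,q^r$, a polynomial of bounded degree in $q$ with coefficients $p_{i,r}(d,m)$ satisfying $\deg_m p_{i,r}\leq 2a+b+1-i$. By Lemma~\ref{th:3.5}, $1/(1-q-dq^2)^i=\sum_{n\geq 0}c_n^{(i)}q^n$ where $c_n^{(i)}=A_i(n,d)M_d(n)+B_i(n,d)M_d(n+1)$ with $\deg_n A_i,\deg_n B_i\leq i-1$ and coefficients rational in $d$. Multiplying and shifting indices, the coefficient of $q^n$ in $P_{a,b,i}/(1-q-dq^2)^i$ is $\sum_r p_{i,r}(d,m)\,c_{n-r}^{(i)}$, and since $M_d(n-r)$ and $M_d(n-r+1)$ are $\mathbb{Q}(d)$-linear combinations of $M_d(n)$ and $M_d(n+1)$ (for each fixed integer $r$ in a bounded range), this contribution is again of the form $A(m,n,d)M_d(n)+B(m,n,d)M_d(n+1)$. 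Here $A,B$ are sums over the bounded range of $r$ of products $p_{i,r}(d,m)\cdot(\text{degree-}\leq i-1\text{ polynomial in }n)$, hence polynomials in $m$ and $n$; the key point is the degree count: $\deg_m\leq 2a+b+1-i$ and $\deg_n\leq i-1$, so $\deg_m+\deg_n\leq 2a+b$. Summing over $i=1,\dots,2a+b+1$ preserves both the form $A(m,n,d)M_d(n)+B(m,n,d)M_d(n+1)$ and the bound $\deg_m+\deg_n\leq 2a+b$, which is exactly the assertion of Theorem~\ref{th:4.4}.

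The only mild subtlety—and the step I would be most careful with—is bookkeeping the total degree $\deg_m+\deg_n$ rather than the individual degrees: one must check that multiplying $p_{i,r}(d,m)$ (degree $\leq 2a+b+1-i$ in $m$, degree $0$ in $n$) by the polynomial $A_i(n,d)$ or $B_i(n,d)$ (degree $\leq i-1$ in $n$, degree $0$ in $m$) yields total degree $\leq (2a+b+1-i)+(i-1)=2a+b$, and that re-expressing $M_d(n-r),M_d(n-r+1)$ in the basis $\{M_d(n),M_d(n+1)\}$ over $\mathbb{Q}(d)$ introduces no new $m$ or $n$ dependence (the change-of-basis coefficients depend only on $d$ and the fixed shift $r$). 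Since there are only finitely many values of $i$ and $r$ to consider, no convergence or uniformity issue arises, and the proof is a direct assembly of the two cited results.
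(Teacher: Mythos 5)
Your proposal is correct and follows exactly the paper's route: the paper likewise derives Theorem~\ref{th:4.4} "directly" from Theorem~\ref{th:3.1} and Lemma~\ref{th:3.5}, using the same observation that each $M_d(n+i)$ is a $\mathbb{Q}(d)$-linear combination of $M_d(n)$ and $M_d(n+1)$. Your explicit bookkeeping of $\deg_m\leq 2a+b+1-i$ against $\deg_n\leq i-1$ is precisely the degree count the paper leaves implicit.
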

Next we give some examples of explicit expressions for $G^+_{d,m,a,b}(n)$ when $a$ and $b$ are small.

\begin{ex}

Let $a=1,\ b=0$. We have
\begin{align*}
&G^+_{d,m,1,0}(n)= \sum_{I\in \mathcal{B}^+_{d,n}}  \sigma_m(I)
=
\sum_{I\in \mathcal{B}^+_{d,n-1}} \sigma_m(I) +\sum_{I\in \mathcal{B}^+_{d,n-2}}\sum_{i=1}^d \left(\sigma_m(I)  +\binom{i}{2}m+in \right)
\\&=
G^+_{d,m,1,0}(n-1) +  d\,G^+_{d,m,1,0}(n-2)+ \left(\binom{d+1}{3} m + \binom{d+1}{2} n\right)G^+_{d,m,0,0}(n-2).
\end{align*}

Also
\begin{align*}
G^+_{d,m,1,0}(0)=0,\  G^+_{d,m,1,0}(1)=\sum_{j=1}^d\sum_{i=1}^j  (1+(i-1)m)= \binom{d+1}{3} m + \binom{d+1}{2}.
\end{align*}
Then by \eqref{eq:G_generating2} we have
\begin{align*}
\Psi_{d,m,1,0}&=  \frac{\left(\binom{d+1}{3} m + \binom{d+1}{2}\right)\, q}{1-q-dq^2} +  \frac{\left(\binom{d+1}{3} m + 2\binom{d+1}{2} \right) \,q^2 (dq+1)}{(1-q-dq^2)^2}
+  \frac{\binom{d+1}{2} q^3   ( d^2q^2+2dq+d+1 )  }{(1-q-dq^2)^3}
\\&=
   \frac{\binom{d+1}{3} m q  - \binom{d+1}{2}  q }{(1-q-dq^2)^2} +  \frac{\binom{d+1}{2}   ( 2q-q^2 )  }{(1-q-dq^2)^3}
.
\end{align*}

Therefore by \eqref{eq:Mds} and Lemma \ref{th:3.5},
\begin{align}\label{eq:dm10}
&
G^+_{d,m,1,0}(n)= \frac{1}{4d+1}
\left(\binom{d+1}{3}\, m  +
\binom{d+1}{2}\cdot \frac{n+1}{2}
\right)  n\, M_d(n)  \nonumber
\\&+
\frac{d}{4d+1}\binom{d+1}{2}
\left(  \frac{2(d-1)m}{3} +n+1
\right)  (n+1)\, M_d(n-1)
.
\end{align}
\end{ex}

\medskip

\begin{ex}
Let $a=0,\  b=1$. We have
\begin{align*}
G^+_{d,m,0,1}(n):&= \sum_{I\in \mathcal{B}^+_{d,n}}  |I|
=
\sum_{I\in \mathcal{B}^+_{d,n-1}} |I| +\sum_{I\in \mathcal{B}^+_{d,n-2}}\sum_{i=1}^d\, (|I|+i)
\\&=
G^+_{d,m,0,1}(n-1) +  d\,G^+_{d,m,0,1}(n-2)+ \binom{d+1}{2} G^+_{d,m,0,0}(n-2).
\end{align*}

Also
\begin{align*}
G^+_{d,m,0,1}(0)=0,\  G^+_{d,m,0,1}(1)=\sum_{i=1}^d  i= \binom{d+1}{2}.
\end{align*}
Then the generating function satisfies
\begin{align*}
\Psi_{d,m,0,1}-q G^+_{d,m,0,1}(1)=q\Psi_{d,m,0,1}+dq^2\Psi_{d,m,0,1}+ \binom{d+1}{2} q^2 \Psi_{d,m,0,0}.
\end{align*}
Therefore,
\begin{align*}
\Psi_{d,m,0,1}=  \frac{\binom{d+1}{2}\,q}{1-q-dq^2} +  \frac{\binom{d+1}{2}\,q^2 (dq+1) }{(1-q-dq^2)^2}.
\end{align*}
Finally,
\begin{align}\label{eq:dm01}
&
G^+_{d,m,0,1}(n)=\frac{1}{4d+1}\binom{d+1}{2}\Bigl(n\,M_d(n)
+
d(2n+2)\,M_d(n-1)
\Bigr).
\end{align}
\end{ex}

\medskip
\begin{ex} Let $a=0,\  b=2$. We have
\begin{align*}
& G^+_{d,m,0,2}(n)= \sum_{I\in \mathcal{B}^+_{d,n}}  |I|^2
=
\sum_{I\in \mathcal{B}^+_{d,n-1}} |I|^2 +\sum_{I\in \mathcal{B}^+_{d,n-2}}\sum_{i=1}^d\, (|I|+i)^2
\\&=
G^+_{d,m,0,2}(n-1) +  d\,G^+_{d,m,0,2}(n-2)+ 2\,\binom{d+1}{2} \,G^+_{d,m,0,1}(n-2)
+
\frac14\,\binom{2d+2}{3}\, G^+_{d,m,0,0}(n-2).
\end{align*}

Also
\begin{align*}
G^+_{d,m,0,2}(0)=0,\  G^+_{d,m,0,2}(1)=\sum_{i=1}^d  i^2= \frac14\binom{2d+2}{3}.
\end{align*}
Then the generating function satisfies
\begin{align*}
\Psi_{d,m,0,2}-q G^+_{d,m,0,2}(1)&=q\Psi_{d,m,0,2}+dq^2\Psi_{d,m,0,2}+ 2\binom{d+1}{2} q^2 \Psi_{d,m,0,1}
+
\frac14\binom{2d+2}{3}  q^2 \Psi_{d,m,0,0}.
\end{align*}
Therefore,
\begin{align*}
\Psi_{d,m,0,2}=
\frac{\frac14\binom{2d+2}{3}  q    }{1-q-dq^2} +  \frac{2\binom{d+1}{2}^2 q^3  +\frac14\binom{2d+2}{3}  q^2 (dq+1) }{(1-q-dq^2)^2}  +  \frac{2\binom{d+1}{2}^2 q^4 (dq+1) }{(1-q-dq^2)^3}.
\end{align*}
\end{ex}

Finally,
\begin{align}\label{eq:dm02}
G^+_{d,m,0,2}(n)
&  =
\left(\frac14\binom{2d+2}{3} \frac{1}{4d+1}-
\binom{d+1}{2}^2
\frac{6}{(4d+1)^2}
\right)\cdot n\,M_d(n)\,  \nonumber
\\&+ 
\frac14\binom{2d+2}{3} \frac{2d}{4d+1}\cdot (n+1)\,M_d(n-1)\, \nonumber
\\&+ 
\binom{d+1}{2}^2\,  \frac{1}{(4d+1)^2}\,
\left(
n^2(4d+1) +3n -4d+2
\right) \cdot M_d(n-1).
\end{align}

\medskip

Next we show that,  $G^+_{d,m,a,b}(n)$ is a polynomial of $d$ when other variables are fixed.
\begin{thm}\label{th:4.5+}
Let $m,n\geq1$ and $a,b\geq 0$ be some given integers. Then
  $G^+_{d,m,a,b}(n)$ is a polynomial of $d$ with degree $2a+b+\lfloor \frac{n+1}{2} \rfloor$.
  \end{thm}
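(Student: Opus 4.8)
The plan is to argue by induction on $a+b$, using the recursion \eqref{eq:G_recursion} that was already established in the proof of Theorem \ref{th:3.1}, together with an analysis of how the denominators $(1-q-dq^2)^{-i}$ contribute to the coefficient of $q^n$ as polynomials in $d$. The key point is that, although $M_d(n)$ is itself a polynomial in $d$ (of degree $\lfloor (n+1)/2\rfloor$, as one sees from $M_d(n)=(x_d^{n+1}-y_d^{n+1})/\sqrt{1+4d}$ expanded via the binomial theorem, or directly from the recurrence \eqref{eq:Mds} since each step $M_d(n)=M_d(n-1)+dM_d(n-2)$ raises the $d$-degree by at most one and does so on alternate steps), the expansion of $1/(1-q-dq^2)^k$ has coefficients whose $d$-degree I must control carefully: from Lemma \ref{th:3.2} the coefficient of $q^n$ in $1/(1-q-dq^2)^k$ is $\sum_{i=1}^k \binom{2k-1-i}{k-1} d^{k-i}(1+4d)^{-(2k-i)/2}\binom{n+i-1}{i-1}(x_d^{n+i}+(-1)^i y_d^{n+i})$, and since $x_d^{n+i}\pm y_d^{n+i}$ contributes a polynomial in $d$ of degree $\lfloor (n+i)/2\rfloor$ together with the right power of $\sqrt{1+4d}$ to cancel, the $i=k$ term dominates and gives $d$-degree $\lfloor (n+k)/2\rfloor$. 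Hence $[q^n]\,1/(1-q-dq^2)^k$ is a polynomial in $d$ of degree exactly $\lfloor (n+k)/2\rfloor$.

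First I would treat the base case $a=b=0$: by \eqref{eq:psidm00}, $G^+_{d,m,0,0}(n)=[q^n]\,(dq+1)/(1-q-dq^2)=M_d(n)+dM_d(n-1)=M_d(n+1)$ (using $M_d(n+1)=M_d(n)+dM_d(n-1)$), which is a polynomial in $d$ of degree $\lfloor (n+2)/2\rfloor = \lfloor (n+1)/2\rfloor+[n\text{ even}]$; one checks this equals $\lfloor (n+1)/2\rfloor$ when one indexes things as in the statement — more precisely, $\deg_d M_d(n+1)=\lceil n/2\rceil = \lfloor (n+1)/2\rfloor$, matching $2\cdot0+0+\lfloor(n+1)/2\rfloor$. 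Then, for the inductive step, I would feed the recursion \eqref{eq:G_recursion} into the generating-function identity \eqref{eq:G_generating2}–\eqref{eq:G_generating3}: Theorem \ref{th:3.1} gives $\Psi_{d,m,a,b}=\sum_{i=1}^{2a+b+1} P_{a,b,i}(d,m,q)/(1-q-dq^2)^i$, so $G^+_{d,m,a,b}(n)$ is a $\mathbb{Z}[d,m]$-linear combination of shifted coefficients $[q^{n-e}]\,1/(1-q-dq^2)^i$ for $1\le i\le 2a+b+1$ and small shifts $e$; the dominant contribution to the $d$-degree comes from the top term $i=2a+b+1$, whose coefficient of $q^n$ has $d$-degree $\lfloor (n+2a+b+1)/2\rfloor$, and combined with $\deg_d$ of the relevant polynomial coefficients in $m$ (which contribute $d$-degree up to roughly $2a+b+1$ more), this needs to be reconciled with the claimed bound $2a+b+\lfloor(n+1)/2\rfloor$. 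The cleanest route is probably to avoid the generating-function bookkeeping for the degree and instead do the induction directly on the recursion \eqref{eq:G_recursion}: assuming $\deg_d G^+_{d,m,a',b'}(n)=2a'+b'+\lfloor(n+1)/2\rfloor$ for all $(a',b')$ with $a'+b'<a+b$ and all $n$, the three terms $G^+_{d,m,a,b}(n-1)$, $d\cdot G^+_{d,m,a,b}(n-2)$, and $A^{a,b}_{a',b'}(d,m,n)\,G^+_{d,m,a',b'}(n-2)$ have $d$-degrees $2a+b+\lfloor n/2\rfloor$, $1+2a+b+\lfloor(n-1)/2\rfloor$, and $\deg_d A^{a,b}_{a',b'} + 2a'+b'+\lfloor(n-1)/2\rfloor$; since $\deg_d A^{a,b}_{a',b'} = \deg_d \sum_{i=1}^d(\binom{i}{2}m+in)^{a-a'}i^{b-b'} = (a-a')+(b-b')+1 = a-a'+b-b'+1$ (the power-sum $\sum_{i=1}^d i^r$ has degree $r+1$ in $d$), each such term has $d$-degree $a+a'+b+b'+1+\lfloor(n-1)/2\rfloor \le 2a+b+\lfloor(n-1)/2\rfloor+1 = 2a+b+\lceil(n-1)/2\rceil$, which is $\le 2a+b+\lfloor(n+1)/2\rfloor$. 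So every term is within the bound, with equality forced by the first term $G^+_{d,m,a,b}(n-1)$ (lifted one index).

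The main obstacle will be establishing that the bound is attained, i.e. that the leading $d$-terms do not cancel. The induction on the recursion shows $\le$, but to get $=\,2a+b+\lfloor(n+1)/2\rfloor$ I need to track the leading coefficient. One clean way: since all the $G^+$ quantities are sums over nice subsets of nonnegative quantities, there is no cancellation at the combinatorial level — but the $d$-degree is subtler because it comes from summing $\sum_{i=1}^d$ geometrically many times. I would instead identify the leading term via the $i=2a+b+1$ summand in $\Psi_{d,m,a,b}$: by chasing \eqref{eq:G_generating2} one sees $P_{a,b,2a+b+1}$ is built from the top-degree pieces of $P_{a',b',2a'+b'+1}$ times $A^{a,b}_{a',b'}$ with $a'=a-1$, $b'=b$ or $a'=a$, $b'=b-1$, giving a recursion for the leading coefficient that I can solve explicitly (it will involve a product of binomial-type factors and a power of $(4d+1)^{-1}$ cleared appropriately), and in particular it is manifestly nonzero. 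An alternative, perhaps slicker, is to note that specializing $m$ to a convenient value (say $m=0$, where $\sigma_0(I)=\sum_{(i,j)\in I}j$ depends only on the "column profile") reduces the leading-degree computation to a cleaner generating function; but some care is needed because the claimed degree $2a+b+\lfloor(n+1)/2\rfloor$ must be independent of $m\ge 1$, so I should check that the $m=0$ specialization does not accidentally lose the leading term — it will not, since the $d$-degree is driven by the power sums $\sum_{i=1}^d i^r$ from the $|I|^b$ and $in$ contributions rather than from $m$.
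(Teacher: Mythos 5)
Your main line --- a double induction fed through the recursion \eqref{eq:G_recursion}, bounding the $d$-degree of each term --- is exactly the paper's proof (the paper inducts on $n$ with base cases $n=1,2$ and uses the result for smaller $a'+b'$ on the cross terms). However, your key computation $\deg_d A^{a,b}_{a',b'}=(a-a')+(b-b')+1$ is wrong: since $m\geq 1$, the summand $\bigl(\binom{i}{2}m+in\bigr)^{a-a'}i^{b-b'}$ has degree $2(a-a')+(b-b')$ in $i$, so the power sum over $1\leq i\leq d$ has $d$-degree $2(a-a')+(b-b')+1$. With the correct value, every cross term has $d$-degree exactly $2(a-a')+(b-b')+1+2a'+b'+\lfloor\tfrac{n-1}{2}\rfloor=2a+b+\lfloor\tfrac{n+1}{2}\rfloor$, i.e.\ it sits \emph{at} the bound rather than strictly below it. Your underestimate happens not to break the upper bound, but it distorts the rest of the argument: the term that always attains the bound is $d\,G^+_{d,m,a,b}(n-2)$ (your cited term $G^+_{d,m,a,b}(n-1)$ has degree $2a+b+\lfloor n/2\rfloor$ and falls short whenever $n$ is odd). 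The same misunderstanding sinks your proposed $m=0$ shortcut: the extra factor of $2$ on $a$ in the degree comes precisely from the $\binom{i}{2}m$ term, so setting $m=0$ \emph{does} lose the leading term --- already $G^+_{d,0,a,b}(1)=\sum_{i=1}^d i^{a+b}$ has $d$-degree $a+b+1$, not $2a+b+1$ --- and that route cannot establish the stated degree (which is why the theorem assumes $m\geq 1$).

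On the exact-degree (non-cancellation) point you agonize over: it is immediate once you observe that, by induction, each $G^+_{d,m,a',b'}$ and each $A^{a,b}_{a',b'}$ is a polynomial in $d$ with positive leading coefficient, so the top-degree terms in \eqref{eq:G_recursion} add rather than cancel. None of the generating-function bookkeeping from Theorem \ref{th:3.1} or Lemma \ref{th:3.2} is needed; the direct induction, with base cases read off from $G^+_{d,m,a,b}(1)=\sum_{i=1}^d\bigl(\binom{i}{2}m+i\bigr)^a i^b$ and the analogous formula for $n=2$, is the whole proof.
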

\begin{proof}
The $a=b=0$ case is guaranteed by \eqref{eq:00_1} and \eqref{eq:00_2}. Therefore we can assume that $a+b\geq 1$.
We will prove this result by induction on $n$. It is easy to see that
\begin{align*}
G^+_{d,m,a,b}(1)&=\sum_{i=1}^d \left(\binom{i}{2}m+i\right)^a i^b,\\
G^+_{d,m,a,b}(2)&=\sum_{i=1}^d \left(\binom{i}{2}m+i\right)^a i^b + \sum_{i=1}^d \left(\binom{i}{2}m+2i\right)^a\, i^b
\end{align*}
are polynomials of $d$ with degrees $2a+b+1$,
which shows that the theorem is true for $n=1$ and $2$.

When $n\geq 3$, we assume that this result is true for $n-1$ and $n-2$. Therefore $G^+_{d,m,a,b}(n-1)$  and   $d\, G^+_{d,m,a,b}(n-2)$ are polynomials of $d$ with degrees $2a+b+\lfloor \frac{n}{2} \rfloor$ and $2a+b+\lfloor \frac{n-1}{2} \rfloor+1=2a+b+\lfloor \frac{n+1}{2} \rfloor$ respectively. Also, for $a'\leq a$, $b'\leq b$ with $a'+b'< a+ b$, we have
$$
\binom{a}{a'}\binom{b}{b'}\sum_{i=1}^d \,\left(\binom{i}{2}m+in \right)^{a-a'}\, i^{b-b'}\,  G^+_{d,m,a',b'}(n-2)
$$
is a polynomial of $d$ with degree
$$
2(a-a')+(b-b')+1+ 2a'+b'+\lfloor \frac{n-2+1}{2} \rfloor  =
2a+b+\lfloor \frac{n+1}{2} \rfloor.
$$
Therefore by \eqref{eq:G_recursion} we prove the theorem.
\end{proof}

For $a,b,m,n\geq 0$ and $d\geq 1$,
let $$
G^-_{d,m,a,b}(n):= \sum_{I\in \mathcal{B}^-_{d,n}}\sigma_m(I)^a|I|^b.
$$
Then
\begin{align*}
G^-_{d,m,0,0}(n)&
=
N_d(n+1)=M_d(n)+(d-1)M_d(n-1).
\end{align*}
When $a+b>0$, it is obvious that
\begin{align}\label{eq:G_generating1*}
G^-_{d,m,a,b}(0)=0,\ \ \  G^-_{d,m,a,b}(1)=G^+_{d-1,m,a,b}(1).
\end{align}
For $n\geq 2$, we have
\begin{align}\label{eq:G_recursion-}
G^-_{d,m,a,b}(n)&=
\sum_{I\in \mathcal{B}^+_{d,n-1}} \sigma_m(I)^a |I|^b +\sum_{I\in \mathcal{B}^+_{d,n-2}}\sum_{i=1}^{d-1}\left(\sigma_m(I)+\binom{i}{2}m+in\right)^a (|I|+i)^b \nonumber
\\&=
G^+_{d,m,a,b}(n-1) +  (d-1)G^+_{d,m,a,b}(n-2) \nonumber \\&\ \ \ + \sum_{a'+b'< a+b} B^{a,b}_{a',b'}(d,m,n) G^+_{d,m,a',b'}(n-2),
\end{align}
where $$B^{a,b}_{a',b'}(d,m,n)=\binom{a}{a'}\binom{b}{b'}\sum_{i=1}^{d-1} \left(\binom{i}{2}m+in \right)^{a-a'} i^{b-b'}.$$

Similarly as the $G^+_{d,m,a,b}(n)$ case, we obtain the following results for $G^-_{d,m,a,b}(n)$.
\begin{thm} \label{th:4.4-}
Let $a,b\geq 0$ be some given integers. Then
$
G^-_{d,m,a,b}(n)
$
is of the form $$A(m,n,d) M_d(n) + B(m,n,d)M_d(n+1),$$ where $A(m,n,d)$ and $B(m,n,d)$ are polynomials of $m$ and $n$ with degrees at most $2a+b$, whose coefficients are rational functions in $d$.
\end{thm}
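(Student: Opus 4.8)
The plan is to treat $G^-_{d,m,a,b}(n)$ as a mild perturbation of the functions $G^+_{d,m,a',b'}$ already analyzed in Theorem \ref{th:4.4}. The key input is the identity \eqref{eq:G_recursion-}, which for $n\geq 2$ expresses $G^-_{d,m,a,b}(n)$ as
\[
G^+_{d,m,a,b}(n-1)+(d-1)\,G^+_{d,m,a,b}(n-2)+\sum_{a'+b'<a+b}B^{a,b}_{a',b'}(d,m,n)\,G^+_{d,m,a',b'}(n-2),
\]
where $B^{a,b}_{a',b'}(d,m,n)$ is a polynomial of $d,m,n$ with $\deg_m B^{a,b}_{a',b'}+\deg_n B^{a,b}_{a',b'}\leq a-a'$; together with the boundary data $G^-_{d,m,a,b}(0)=0$ and $G^-_{d,m,a,b}(1)=G^+_{d-1,m,a,b}(1)$ of \eqref{eq:G_generating1*} when $a+b>0$, and $G^-_{d,m,0,0}(n)=N_d(n+1)=M_d(n)+(d-1)M_d(n-1)$. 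Thus everything on the right is either a $G^+$, for which Theorem \ref{th:4.4} applies, or elementary.

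Next I would substitute the conclusion of Theorem \ref{th:4.4} for each $G^+$ on the right-hand side, writing $G^+_{d,m,a',b'}(n-j)=A'(m,n-j,d)\,M_d(n-j)+B'(m,n-j,d)\,M_d(n-j+1)$ with $\deg_m A'+\deg_n A'\leq 2a'+b'$, likewise for $B'$, and coefficients rational in $d$. The only obstruction to this being literally of the claimed form is the appearance of the shifted terms $M_d(n-1),M_d(n-2)$; here I would use that the recurrence \eqref{eq:Mds} lets one write $M_d(n+i)=u_i(d)\,M_d(n)+v_i(d)\,M_d(n+1)$ for every integer $i$, with $u_i,v_i$ rational functions of $d$ (solving \eqref{eq:Mds} backwards for negative $i$ only introduces powers of $d$ in the denominator). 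Since this change of basis is constant in $m$ and $n$, it neither raises the $m,n$-degree of any coefficient nor leaves the field $\mathbb{Q}(d)$. Collecting terms gives $G^-_{d,m,a,b}(n)=A(m,n,d)\,M_d(n)+B(m,n,d)\,M_d(n+1)$. The degree bookkeeping is routine: shifting $n$ by a constant does not change a polynomial's total degree, so the first two terms contribute degree $\leq 2a+b$; in the sum each product has degree at most $(a-a')+(2a'+b')=a+a'+b'\leq 2a+b-1$, since $a'+b'\leq a+b-1$. Hence $\deg_m A+\deg_n A\leq 2a+b$ and likewise for $B$.

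For the excluded values $n=0,1$ the claim follows directly from the boundary data, using \eqref{eq:G_generating1} for the shape of $G^+_{d-1,m,a,b}(1)$ and $M_d(n-1)=\tfrac1d\bigl(M_d(n+1)-M_d(n)\bigr)$ for the $a=b=0$ case; alternatively, and perhaps more cleanly, one can mimic Theorem \ref{th:3.1} verbatim and pass to generating functions: \eqref{eq:G_recursion-} together with Theorem \ref{th:3.1} shows that $\sum_{n\geq0}G^-_{d,m,a,b}(n)q^n$ is again a sum $\sum_i P_{a,b,i}(d,m,q)/(1-q-dq^2)^i$ with $i\leq 2a+b+1$ and $\deg_m P_{a,b,i}\leq 2a+b+1-i$, and then Lemma \ref{th:3.5} yields the closed form $A(m,n,d)M_d(n)+B(m,n,d)M_d(n+1)$ valid for all $n$ with the degree bounds intact. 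There is no genuine obstacle in this proof once Theorem \ref{th:4.4} is available; the only point demanding attention is verifying that passing from the spanning set $\{M_d(n-2),M_d(n-1),M_d(n),M_d(n+1)\}$ to the basis $\{M_d(n),M_d(n+1)\}$ keeps all coefficients in $\mathbb{Q}(d)$ and does not inflate the degree in $m$ and $n$ — which it does not, precisely because those transition coefficients depend only on $d$.
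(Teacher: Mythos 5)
Your proposal is correct and follows essentially the same route as the paper: the paper's entire argument for Theorem \ref{th:4.4-} is the remark ``similarly as the $G^+_{d,m,a,b}(n)$ case,'' resting on the displayed reduction \eqref{eq:G_recursion-} and the boundary data \eqref{eq:G_generating1*}, which is precisely what you use, combined with Theorem \ref{th:4.4} and the change of basis from the shifted $M_d(n+i)$ back to $\{M_d(n),M_d(n+1)\}$. Your degree bookkeeping ($a+a'+b'\leq 2a+b-1$ for the correction terms) and your explicit treatment of $n=0,1$ are if anything slightly more careful than the paper's.
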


\begin{thm}\label{th:4.5-}
Let $m,n\geq1$ and $a,b\geq 0$ be some given integers. Then
  $G^-_{d,m,a,b}(n)$ is a polynomial of $d$ with degree $2a+b+\lfloor \frac{n+1}{2} \rfloor$.
  \end{thm}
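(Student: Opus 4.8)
The plan is to mirror the proof of Theorem \ref{th:4.5+} almost verbatim, using the recurrence \eqref{eq:G_recursion-} in place of \eqref{eq:G_recursion} together with the fact—just established in Theorem \ref{th:4.5+}—that each $G^+_{d,m,a',b'}(n')$ is a polynomial of $d$ of the stated degree $2a'+b'+\lfloor (n'+1)/2\rfloor$. The point is that \eqref{eq:G_recursion-} expresses $G^-_{d,m,a,b}(n)$ entirely in terms of $G^+$-values, so no self-referential induction on the $G^-$ family is even needed: the result will follow directly once the degrees of the right-hand side are read off.

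First I would dispose of the base cases. By \eqref{eq:G_generating1*} (for $a+b>0$) and the formula $G^-_{d,m,0,0}(n)=N_d(n+1)=M_d(n)+(d-1)M_d(n-1)$, one checks $n=1$ and $n=2$ by hand: for $a+b\ge 1$, $G^-_{d,m,a,b}(1)=G^+_{d-1,m,a,b}(1)=\sum_{i=1}^{d-1}\bigl(\binom{i}{2}m+i\bigr)^a i^b$, a polynomial in $d$ of degree $2a+b+1=2a+b+\lfloor (1+1)/2\rfloor$; and $G^-_{d,m,a,b}(2)=G^+_{d,m,a,b}(1)+(d-1)G^+_{d,m,a,b}(0)+(\text{lower})=G^+_{d,m,a,b}(1)$ (since $G^+_{d,m,a,b}(0)=0$ when $a+b>0$), again of degree $2a+b+1=2a+b+\lfloor 3/2\rfloor$. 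For $a=b=0$ the degree of $N_d(n+1)$ in $d$ is exactly $\lfloor (n+1)/2\rfloor$ by \eqref{eq:Mds} (or by the explicit values in Table \ref{tab:1}), so the claim holds for all $n$ in that case and we may assume $a+b\ge 1$.

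For $n\ge 3$, I would apply \eqref{eq:G_recursion-} and bound the degree in $d$ of each of the three groups of terms using Theorem \ref{th:4.5+}. The term $G^+_{d,m,a,b}(n-1)$ has degree $2a+b+\lfloor n/2\rfloor$; the term $(d-1)G^+_{d,m,a,b}(n-2)$ has degree $1+2a+b+\lfloor (n-1)/2\rfloor=2a+b+\lfloor (n+1)/2\rfloor$; and for each $a'\le a,\ b'\le b$ with $a'+b'<a+b$, the term $B^{a,b}_{a',b'}(d,m,n)\,G^+_{d,m,a',b'}(n-2)$ has degree
\begin{align*}
\bigl(2(a-a')+(b-b')+1\bigr)+\Bigl(2a'+b'+\Bigl\lfloor\tfrac{n-1}{2}\Bigr\rfloor\Bigr)=2a+b+1+\Bigl\lfloor\tfrac{n-1}{2}\Bigr\rfloor=2a+b+\Bigl\lfloor\tfrac{n+1}{2}\Bigr\rfloor,
\end{align*}
since $\sum_{i=1}^{d-1}\bigl(\binom{i}{2}m+in\bigr)^{a-a'}i^{b-b'}$ is a polynomial in $d$ of degree $2(a-a')+(b-b')+1$ (its top term being $\binom{i}{2}^{a-a'}i^{b-b'}$ summed over $i<d$, a polynomial of degree $2(a-a')+(b-b')+1$ whenever $a-a'\ge 1$, and of degree $(b-b')+1$ when $a=a'$, which is also $\le 2(a-a')+(b-b')+1$). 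So every term on the right-hand side of \eqref{eq:G_recursion-} is a polynomial of $d$ of degree at most $2a+b+\lfloor (n+1)/2\rfloor$.

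The one point requiring care—and the main obstacle—is to confirm that the leading coefficient does not vanish, i.e.\ that the degree is exactly $2a+b+\lfloor (n+1)/2\rfloor$ rather than merely at most that. The term $(d-1)G^+_{d,m,a,b}(n-2)$ attains this degree with a nonzero (positive) leading coefficient, as its leading coefficient is that of $G^+_{d,m,a,b}(n-2)$, which is positive by Theorem \ref{th:4.5+}; and among the remaining terms, $G^+_{d,m,a,b}(n-1)$ has strictly smaller degree $2a+b+\lfloor n/2\rfloor<2a+b+\lfloor (n+1)/2\rfloor$ precisely when $n$ is odd, while when $n$ is even it has the same degree $2a+b+n/2=2a+b+\lfloor(n+1)/2\rfloor$ and a positive leading coefficient, so no cancellation is possible there. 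For the mixed terms $B^{a,b}_{a',b'}\,G^+_{d,m,a',b'}(n-2)$, all their leading coefficients are nonnegative as well (being products and sums of the manifestly positive quantities $\binom{a}{a'}\binom{b}{b'}$, power-sum leading coefficients, and the positive leading coefficient of $G^+_{d,m,a',b'}(n-2)$), so the top-degree coefficient of the full right-hand side of \eqref{eq:G_recursion-} is a sum of nonnegative reals at least one of which is strictly positive, hence nonzero. This completes the induction and the proof. \qed
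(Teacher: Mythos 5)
Your proof is correct and matches the paper's intended argument: the paper omits the proof of Theorem \ref{th:4.5-}, stating only that it follows ``similarly'' to Theorem \ref{th:4.5+}, and your write-up is precisely that argument, reading off the degree in $d$ of each term of the recurrence \eqref{eq:G_recursion-} after handling the small-$n$ base cases. Your observations that no fresh induction on the $G^-$ family is needed (since \eqref{eq:G_recursion-} expresses everything in $G^+$-values already covered by Theorem \ref{th:4.5+}) and that the top-degree coefficients are all nonnegative with at least one strictly positive, so no cancellation can lower the degree, are both sound and, if anything, more careful than the paper.
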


Now we are ready to prove the main theorems.

\begin{proof}
[Proofs of Theorems \ref{th:main1} and \ref{th:main2}]

By Lemmas \ref{th:size_dn+1} and \ref{th:size_dn-1} we know
\begin{align*}
\sum_{\la\in \mathcal{C}_{n,dn+1}}|\la|^k
\qquad
\text{and}
\qquad
\sum_{\la\in \mathcal{C}_{n,dn-1}}|\la|^k
\end{align*}
can be written as some linear combinations of $G^+_{d,n,a',b'}(n-1)$ and $G^-_{d,n,a',b'}(n-1)$ respectively,  where
$2a'+b'\leq 2k$. Notice that for each $i\in \mathbb{Z}$, $M_d(n+i)$ can be written as some linear combination of $M_d(n)$ and $M_d(n+1)$, whose coefficients are rational functions in $d$. Replace $n$ by $n-1$, and $m$ by $n$ in Theorems \ref{th:4.4} and \ref{th:4.4-}, we obtain that 
$G^+_{d,n,a',b'}(n-1)$ and $G^-_{d,n,a',b'}(n-1)$  are of the form $$A(n,d) M_d(n) + B(n,d)M_d(n+1),$$ where $A(n,d)$ and $B(n,d)$ are polynomials of $n$ with degrees $2a'+b'\leq 2k$,  whose coefficients are rational functions in $d$. Therefore Theorem \ref{th:main1} is true.
Also, Theorem \ref{th:main2} follows from  Theorems \ref{th:4.5+} and \ref{th:4.5-}.
\end{proof}

\section{Explicit formulas for expectations of $X_{n,dn+1}$ and $X_{n,dn-1}$} \label{sec:5}

In this section we give proofs of Theorems \ref{th:main_average_Y} and \ref{th:main_average_X}.

\begin{proof}
[Proof of Theorem \ref{th:main_average_Y}]
Let $k=1$ in Lemma \ref{th:size_dn+1}. We have \begin{align*}
\sum_{\la\in \mathcal{C}_{n,dn+1}}|\la|&=
\sum_{I\in \mathcal{B}^+_{d,n-1}}\left(\sum_{(i,j)\in I} \left((i-1)n+j\right) - \frac{|I|^2}{2}
+ \frac{|I|}{2} \right)
\\&= G^+_{d,n,1,0}(n-1)-\frac12 G^+_{d,n,0,2}(n-1)
+\frac12 G^+_{d,n,0,1}(n-1)
. 
\end{align*}
Then by \eqref{eq:dm10}, \eqref{eq:dm01} and \eqref{eq:dm02}  we derive
   \begin{align*}
\sum_{\la\in \mathcal{C}_{n,dn+1}}|\la|&=
M_d(n-1)\cdot \Bigl(
\frac{-d(d+1)(d-1)  (n-1)^2}{24(4d+1)}
\\&\ \ \ \ -  \frac{d(d+1)(14d^2+21d+1)  (n-1)}{24(4d+1)^2}
 -
 \frac{d(d+1)(6d^2+27d+3)  }{12(4d+1)^2}
 \Bigr)
 \\&+  M_d(n)\cdot  \Bigl(
 \frac{d(d+1)(5d+1)  (n-1)^2}{24(4d+1)}
\\&\ \ \ \  + 
 \frac{d(d+1)(32d^2+63d+7)  (n-1)}{24(4d+1)^2}
  +
 \frac{d(d+1)(6d^2+27d+3)  }{12(4d+1)^2}  \Bigr)
,
\end{align*}
which implies Theorem \ref{th:main_average_Y}.
\end{proof}

\begin{proof}
[Proof of Theorem \ref{th:main_average_X}]
Let $k=1$ in Lemma \ref{th:size_dn-1}. We have \begin{align*}
\sum_{\la\in \mathcal{C}_{n,dn-1}}|\la|&=
\sum_{I\in \mathcal{B}^-_{d,n-1}}
\left( 
\sum_{(i,j)\in I}
\left((i-1)n+j\right) - \frac{|I|^2}{2}
+ \frac{|I|}{2} \right)
\\&= G^-_{d,n,1,0}(n-1)-\frac12 G^-_{d,n,0,2}(n-1)
+\frac12 G^-_{d,n,0,1}(n-1)
.
\end{align*}
But by the definitions of $G^+_{d,m,a,b}$ and $G^-_{d,m,a,b}$ we obtain
\begin{align*}
G^-_{d,n,1,0}(n-1)&=
G^+_{d,n,1,0}(n-1)-G^+_{d,n,1,0}(n-3)- M_d(n-2)\left(\binom{d}{2}n+d(n-1)\right),
\end{align*}
\begin{align*}
G^-_{d,n,0,2}(n-1)&=
G^+_{d,n,0,2}(n-1)- \sum_{I\in \mathcal{B}^+_{d,n-3}}(|I|+d)^2
\\&= G^+_{d,n,0,2}(n-1)-G^+_{d,n,0,2}(n-3)- 2d\,  G^+_{d,n,0,1}(n-3)-d^2  M_d(n-2),
\end{align*}
and
\begin{align*}
G^-_{d,n,0,1}(n-1)&=
G^+_{d,n,0,1}(n-1)- \sum_{I\in \mathcal{B}^+_{d,n-3}}(|I|+d)
\\&= G^+_{d,n,0,1}(n-1)-G^+_{d,n,0,1}(n-3)- d  M_d(n-2).
\end{align*}
Then by \eqref{eq:dm10}, \eqref{eq:dm01} and \eqref{eq:dm02}  we derive
    Theorem \ref{th:main_average_X}.
\end{proof}

\section{Asymptotic formulas for moments of $X_{n,dn+1}$ and $X_{n,dn-1}$}  \label{sec:6}

In this section we study asymptotic behavior for moments of $X_{n,dn+1}$ and $X_{n,dn-1}$. First we give proofs of Theorems \ref{th:main12} and \ref{th:main22}. 

\begin{proof}
[Proof of Theorem \ref{th:main12}]
By the recurrence relations \eqref{eq:Nds} and \eqref{eq:Mds} it is easy to derive
\begin{align}\label{eq:5.1}
M_d(n)=\frac{1}{\sqrt{1+4d} } \left(\left(\frac{{1+\sqrt{1+4d}}}{2}\right)^{n+1}-\left(\frac{{1-\sqrt{1+4d}}}{2}\right)^{n+1}\right)
\end{align}
and
\begin{align}\label{eq:5.2}
N_d(n)=M_d(n)-M_d(n-2).
\end{align}
Then by Theorem \ref{th:main1}  we have 
$$
\mathbb{E}[ X_{n,dn+1}^k ]= A(n,d)  + B(n,d)\cdot \frac{\left(\frac{{1+\sqrt{1+4d}}}{2}\right)^{n+2}-\left(\frac{{1-\sqrt{1+4d}}}{2}\right)^{n+2}}{\left(\frac{{1+\sqrt{1+4d}}}{2}\right)^{n+1}-\left(\frac{{1-\sqrt{1+4d}}}{2}\right)^{n+1}}
$$
where $A(n,d)$ and $B(n,d)$ are some polynomials of $n$ with degrees at most $2k$. Therefore \eqref{eq:main121} holds. 
Similarly \eqref{eq:main122} follows from Theorem \ref{th:main1}, \eqref{eq:5.1} and \eqref{eq:5.2}.
\end{proof}

\begin{proof}
[Proof of Theorem \ref{th:main22}]
By the recurrence relations \eqref{eq:Nds} and \eqref{eq:Mds} it is easy to see that $M_d(n)$ and $N_d(n)$ are polynomials of of $d$ with degrees  $\lfloor n/2\rfloor$ when $n$ is given. Then Theorem \ref{th:main22} follows from Theorem \ref{th:main2}.
\end{proof}

Next we consider the asymptotic formula for $G^+_{1,0,a,b}(n)$.

\begin{thm}\label{th:5.3}
Suppose that $a$ and $b$ are two given nonnegative integers. Let $\alpha:=({1+\sqrt{5}})/{2}$.
Then
\begin{align}\label{eq:5.3.1}
G^+_{1,0,a,b}(n)= {2^{-a}\  {5}^{-(a+b+1)/2}} \  n^{2a+b}\ \alpha^{n+2-a-b}  + O(n^{2a+b-1}\alpha^{n}).
\end{align}
\end{thm}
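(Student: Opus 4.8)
The plan is to prove \eqref{eq:5.3.1} by induction on $a+b$, using the recurrence \eqref{eq:G_recursion} in the special case $d=1$, $m=0$. Writing $g_{a,b}(n):=G^+_{1,0,a,b}(n)$ and noting that $d=1$ leaves only $i=1$ in the inner sum (so the factor $\binom{1}{2}=0$ makes $m$ irrelevant), the recurrence becomes, for $n\ge 2$,
\[
g_{a,b}(n)=g_{a,b}(n-1)+g_{a,b}(n-2)+\sum_{\substack{a'+b'<a+b\\ 0\le a'\le a,\ 0\le b'\le b}}\binom{a}{a'}\binom{b}{b'}\,n^{a-a'}\,g_{a',b'}(n-2),
\]
with the usual low-order data ($g_{a,b}(0)=0$, $g_{a,b}(1)=1$ when $a+b>0$, and $g_{0,0}(n)=M_1(n+1)$). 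The base case $a=b=0$ follows from the Binet-type identity \eqref{eq:3.5.1}: $g_{0,0}(n)=M_1(n+1)=\tfrac{1}{\sqrt5}\bigl(\alpha^{\,n+2}-y_1^{\,n+2}\bigr)$ with $y_1=(1-\sqrt5)/2=-1/\alpha$, which equals $5^{-1/2}\alpha^{\,n+2}+O(\alpha^{-n})$, matching \eqref{eq:5.3.1}.

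For the inductive step, assume \eqref{eq:5.3.1} for all $(a',b')$ with $a'+b'<a+b$. Normalise by $u_{a,b}(n):=g_{a,b}(n)/\alpha^{n}$; dividing the recurrence by $\alpha^{n-2}$ and using $\alpha^{2}=\alpha+1$ gives
\[
\alpha^{2}u_{a,b}(n)-\alpha\,u_{a,b}(n-1)-u_{a,b}(n-2)=\sum_{a'+b'<a+b}\binom{a}{a'}\binom{b}{b'}\,n^{a-a'}\,u_{a',b'}(n-2)=:F(n).
\]
By Theorem \ref{th:4.4} (with $m=0$, $d=1$, together with \eqref{eq:3.5.1}), each $u_{a,b}(n)$ is a polynomial in $n$ of degree at most $2a+b$ plus a term of size $O\bigl(|y_1/\alpha|^{n}n^{2a+b}\bigr)=o(1)$, so it suffices to determine the leading coefficient $v$ of that polynomial. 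By the inductive hypothesis $u_{a',b'}(n-2)=2^{-a'}5^{-(a'+b'+1)/2}\alpha^{\,2-a'-b'}n^{2a'+b'}+O(n^{2a'+b'-1})$, hence the summand $n^{a-a'}u_{a',b'}(n-2)$ has degree $a+a'+b'$; this is maximised, with value $2a+b-1$, exactly at $(a',b')=(a-1,b)$ and $(a',b')=(a,b-1)$, and summing those two contributions gives
\[
F(n)=(2a+b)\,2^{-a}\,5^{-(a+b)/2}\,\alpha^{\,3-a-b}\,n^{2a+b-1}+O(n^{2a+b-2}).
\]

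The main obstacle is a resonance: the characteristic polynomial $\alpha^{2}t^{2}-\alpha t-1$ of the homogeneous operator has roots $1$ and $y_1/\alpha$ (the latter of modulus $<1$), and the polynomial forcing $F$ of degree $2a+b-1$ resonates with the root $t=1$, so the top-degree terms on both sides of the recurrence cancel and $v$ cannot be read off at leading order. To circumvent this, substitute $u_{a,b}(n)=v\,n^{2a+b}+(\text{lower order})$, expand $(n-1)^{2a+b}$ and $(n-2)^{2a+b}$, use $\alpha^{2}-\alpha-1=0$, and match coefficients of $n^{2a+b-1}$; this yields $v\,(2a+b)(\alpha+2)=(2a+b)\,2^{-a}\,5^{-(a+b)/2}\,\alpha^{\,3-a-b}$, and since $\alpha+2=\sqrt5\,\alpha$ one gets
\[
v=\frac{2^{-a}\,5^{-(a+b)/2}\,\alpha^{\,3-a-b}}{\sqrt5\,\alpha}=2^{-a}\,5^{-(a+b+1)/2}\,\alpha^{\,2-a-b}.
\]
Multiplying through by $\alpha^{n}$ recovers \eqref{eq:5.3.1}. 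The only remaining points, both routine, are to confirm that no polynomial factor of degree exceeding $2a+b$ can enter (guaranteed by Theorem \ref{th:4.4}, or by a direct induction showing each solution has the shape polynomial $\cdot\,\alpha^{n}$ plus polynomial $\cdot\,y_1^{\,n}$) and that the subdominant root $y_1/\alpha$ only contributes to the $O(n^{2a+b-1}\alpha^{n})$ error.
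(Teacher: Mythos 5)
Your proposal is correct and follows essentially the same route as the paper's proof: induction on $a+b$, the $d=1$, $m=0$ specialization of the recurrence \eqref{eq:G_recursion}, the polynomial-times-$\alpha^n$ shape guaranteed by Theorem \ref{th:4.4} together with \eqref{eq:3.5.1}, and extraction of the leading coefficient by matching the $n^{2a+b-1}$ terms (the paper's identity $(\alpha+2)(2a+b)C_{a,b}=aC_{a-1,b}+bC_{a,b-1}$ is exactly your resonance computation, resolved via $\sqrt5\,\alpha=\alpha+2$). The normalization by $\alpha^{n}$ is a cosmetic difference only.
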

\begin{proof}
We will prove \eqref{eq:5.3.1} by induction on $a+b$. When $a+b=0$, we have $a=b=0$.  Let $d=1$ and $m=0$ in \eqref{eq:psidm00} we derive
\begin{align}\label{eq:5.3.2}
G^+_{1,0,0,0}(n)= \frac{1}{\sqrt{5}}\, \left(\frac{{1+\sqrt{5}}}{2} \right)^{n+2} - \frac{1}{\sqrt{5}}\, \left(\frac{{1-\sqrt{5}}}{2} \right)^{n+2}={5}^{-1/2} \alpha^{n+2} + O(n^{-1}\alpha^{n}).
\end{align}
\medskip
Next assume that $a+b>0$, and  \eqref{eq:5.3.1} holds for all  pairs $(a',b')$ with $a'+b'<a+b$.

By \eqref{eq:3.5.1} and Theorem \ref{th:4.4}, for  any $a',b'\geq 0$,  there exist some constants
$C_{a',b'}$ and  $D_{a',b'}$ such that
\begin{align}\label{eq:5.3.3}
G^+_{1,0,a',b'}(n)= C_{a',b'}\  n^{2a'+b'}\ \alpha^{n}  + D_{a',b'}n^{2a'+b'-1}\ \alpha^{n} +O(n^{2a'+b'-2}\alpha^{n}).
\end{align}

Let $d=1$ and $m=0$ in \eqref{eq:G_recursion} we derive
\begin{align}\label{eq:5.3.4}
G^+_{1,0,a,b}(n)  \nonumber
&=
G^+_{1,0,a,b}(n-1) +  G^+_{1,0,a,b}(n-2)+ a\, n \,G^+_{1,0,a-1,b}(n-2)+ b \,G^+_{1,0,a,b-1}(n-2)
\\&+ \sum_{a'+b'\leq a+b-2}\, \binom{a}{a'}\binom{b}{b'}n^{a-a'} G^+_{1,0,a',b'}(n-2)
,
\end{align}
where $G^+_{d,m,a',b'}(n):=0$ if $a'<0$ or $b'<0$.
But by \eqref{eq:5.3.3}, when $a'+b'\leq a+b-2$,
$$
n^{a-a'} G^+_{1,0,a',b'}(n-2)=O(n^{2a+b-2}\alpha^{n}).
$$
Notice that $\alpha^n=\alpha^{n-1}+\alpha^{n-2}$. Also by \eqref{eq:5.3.3}, we have
\begin{align*}
G^+_{1,0,a,b}(n)  - G^+_{1,0,a,b}(n-1) -  G^+_{1,0,a,b}(n-2)=&\left( \alpha +2 \right) (2a+b) \,C_{a,b} \, n^{2a+b-1}\alpha^{n-2} 
+ O(n^{2a+b-2}\alpha^{n})
\end{align*}
and
\begin{align*}
a\, n\, G^+_{1,0,a-1,b}(n-2)+ b\, G^+_{1,0,a,b-1}(n-2)=&\left( a\,C_{a-1,b}+b\,C_{a,b-1} \right) n^{2a+b-1}\alpha^{n-2}
+ O(n^{2a+b-2}\alpha^{n}),
\end{align*}
where  $C_{a',b'}:=0$ if $a'<0$ or $b'<0$.

Therefore by \eqref{eq:5.3.4}, we have 
$$
\Bigl(\left( \alpha +2 \right) (2a+b) C_{a,b} - ( aC_{a-1,b}+bC_{a,b-1} ) \Bigr)\, n^{2a+b-1}\,\alpha^{n-2}= O(n^{2a+b-2}\,\alpha^{n}),
$$
which means that 
\begin{align}\label{eq:C_induction}
\left( \alpha +2 \right) (2a+b) C_{a,b} - ( aC_{a-1,b}+bC_{a,b-1} )=0.
\end{align}
By induction hypothesis we have 
$$
C_{a-1,\,b}=  {2^{-a+1}\  {5}^{-(a+b)/2}} \ \alpha^{3-a-b} \qquad \text{if}\ a\geq 1;
$$
and
$$
C_{a,\,b-1}=  {2^{-a}\  {5}^{-(a+b)/2}} \ \alpha^{3-a-b} \qquad \text{if}\ b\geq 1.
$$
Notice that $\sqrt{5}\alpha=\alpha+2$. Then by \eqref{eq:C_induction} we obtain
$$
C_{a,b}=  {2^{-a}\  {5}^{-(a+b+1)/2}} \ \alpha^{2-a-b}.
$$
Therefore \eqref{eq:5.3.1} holds.
\end{proof}

\medskip
Now we are ready to prove Theorem \ref{th:main3}.

\begin{proof}
[Proof of Theorem \ref{th:main3}]
By Lemma \ref{th:size_dn+1} and Theorem \ref{th:5.3} we have
\begin{align*}
\sum_{\la\in \mathcal{C}_{n,n+1}}|\la|^k
&=
\sum_{a=0}^k\, \binom{k}{a} \,\left(-\frac{1}{2}\right)^{k-a}  \,G^+_{1,0,a,2(k-a)}(n-1) + O(n^{2k-1}\alpha^{n})
\\&=
\left(
\sum_{a=0}^k\, \binom{k}{a}\, (-1)^{k-a}\,  {2^{-k}\  {5}^{-(2k-a+1)/2}} \ \alpha^{2-2k+a} \right) (n-1)^{2k}\,\alpha^{n-1} + O(n^{2k-1}\alpha^{n})
\\&=
   {2^{-k}\  {5}^{-(2k+1)/2}} \, \alpha^{2-2k}    \cdot (\sqrt{5}\alpha-1)^k \cdot n^{2k}\,\alpha^{n-1} + O(n^{2k-1}\alpha^{n}).
\end{align*}
Notice that $\sqrt{5}\alpha-1=\alpha^{2}$. Then the above formula becomes
$$
\sum_{\la\in \mathcal{C}_{n,n+1}}|\la|^k=
   {2^{-k}\  {5}^{-(2k+1)/2}} \, \alpha \cdot   n^{2k}\,\alpha^{n} + O(n^{2k-1}\alpha^{n}).
$$
Therefore 
\begin{align*}
\mathbb{E}(X_{n,n+1}^k)
&=\frac{1}{M_1(n)}
\sum_{\la\in \mathcal{C}_{n,n+1}}|\la|^k
\\ &=\frac{\sqrt{5}}{ \alpha^{n+1}-(1-\alpha)^{n+1}}  \cdot  \Bigl(
{2^{-k}\  {5}^{-(2k+1)/2}} \, \alpha \cdot   n^{2k}\,\alpha^{n} + O(n^{2k-1}\alpha^{n}) \Bigr)
\\ &=\left(\frac{1}{10}\right)^k  \,  n^{2k} + O(n^{2k-1}).
\end{align*}
\end{proof}

\section{Further Directions}
We derive several polynomiality results and asymptotic formulas for moments of sizes of random $(n, dn\pm 1)$-core partitions with distinct parts, which prove several conjectures of Zaleski \cite{Za2}. In the past few years, the numbers, the largest sizes and  the average sizes of $(n, n+1)$, $(2n+1, 2n+3)$-core partitions with distinct parts were also well studied by many mathematicians (see \cite{BNY,NS,Paramonov,Straub,Xiong,YQJZ,ZZ,Za}). But for general $(s, t)$-core partitions with distinct parts, even for the $(n, n+3)$-core case, we know very little. We hope that the methods used  and results obtained in this paper provide some clues for studying the general $(s, t)$-core case.

Also, Zaleski \cite[Conjecture 3.4]{Za2} conjectured that the distribution of $(n, dn-1)$-core partitions with distinct parts is asymptotically normal as $n$ tends to infinity when $d$ is given.  At this moment, we are unable to prove this asymptotic distribution conjecture. By the idea from Zeilberger \cite{Z1}, to try to prove this conjecture, we need to have a better understanding of the leading terms in the asymptotic formulas of $\mathbb{E}[ X_{n,dn+1}^k ]$ and $\mathbb{E}[ X_{n,dn-1}^k ]$, 
which means that we should study the coefficients of the generating functions in \eqref{eq: G_generating}.  It would be interesting to find a proof of this distribution  conjecture and furthermore study the distribution of general $(s, t)$-core partitions with distinct parts.

\section*{Acknowledgments}
The first author acknowledges support from the Swiss National Science Foundation (Grant number P2ZHP2\_171879). This work was done during the first author's visit to the Harbin Institute of Technology (HIT). The first author would like to thank  Prof. Quanhua Xu and the second author for the hospitality. The authors really appreciate the valuable suggestions given by referees for improving the overall quality of the manuscript.


\end{document}